\documentclass[12pt]{article}
\usepackage[utf8]{inputenc}

\usepackage{geometry} 
\usepackage[font=footnotesize,labelfont=bf]{caption}

\usepackage[backend=biber,giveninits=true,url=false, doi=false, isbn=false]{biblatex}
\addbibresource{billiard_ref.bib}

\geometry{letterpaper}
\geometry{margin=3cm}

\usepackage{amsfonts}
\usepackage{amsmath}
\usepackage{amssymb}
\usepackage{amsthm}
\usepackage{mathtools}
\usepackage{xfrac}
\usepackage{spot}
\usepackage{tikz-cd}
\usepackage{subcaption}
\usepackage{hyperref}

\usepackage{pgf,tikz}
\usetikzlibrary{arrows}
\usetikzlibrary[patterns]

\newcommand{\ol}{\overline}
\newcommand{\wt}{\widetilde}

\newcommand{\inv}{^{-1}}

\newcommand{\R}{\mathbb{R}}

\newcommand{\N}{\mathbb{N}}

\newcommand{\CT}{\mathcal{T}}
\newcommand{\CB}{\mathcal{B}}
\newcommand{\CP}{\mathcal{P}}
\newcommand{\CR}{\mathcal{R}}
\newcommand{\CL}{\mathcal{L}}

\newtheorem{theorem}{Theorem}
\newtheorem*{theorem*}{Main Theorem}
\newtheorem{lemma}[theorem]{Lemma}
\newtheorem{corollary}[theorem]{Corollary}
\newtheorem{proposition}[theorem]{Proposition}

\newtheorem{remark}[theorem]{Remark}

\theoremstyle{definition}
\newtheorem{definition}[theorem]{Definition}

\title{Uniqueness of Billiard Coding in Polygons}
\author{Li Yunzhe}

\begin{document}

	\definecolor{zzttqq}{rgb}{0.6,0.2,0.}
	\definecolor{uuuuuu}{rgb}{0.26666666666666666,0.26666666666666666,0.26666666666666666}
	\definecolor{qqwuqq}{rgb}{0.,0.39215686274509803,0.}
	\definecolor{ffffff}{rgb}{1.,1.,1.}
	\definecolor{wrwrwr}{rgb}{0.3803921568627451,0.3803921568627451,0.3803921568627451}
	\definecolor{sqsqsq}{rgb}{0.3803921568627451,0.3803921568627451,0.3803921568627451}
	\definecolor{ududff}{rgb}{0.08235294117647059,0.396078431372549,0.7529411764705882}
	\definecolor{xdxdff}{rgb}{0.08235294117647059,0.396078431372549,0.7529411764705882}

	\maketitle

	\begin{abstract}
		We consider polygonal billiards and we show the uniqueness of coding of non-periodic billiard trajectories in polygons whose holes have non-zero minimal diameters, generalising a theorem of Galperin, Krüger and Troubetzkoy.
	\end{abstract}

	\section{Introduction} 
	\label{sec:introduction}
	
	The study of mathematical billiards is a rich subject in dynamical systems. It describes the frictionless motion of a mass point in a domain with elastic reflection on the boundary. Among many other variations of mathematical billiards, polygonal billiards concerns the billiard problem in a two dimensional polygonal domain. 

	Let $Q \subset \R^2$ be a polygon. 	The set $TQ$ consists of couples $(x, v)$ where $x$ is a point in $Q$ and $v$ a unit vector tangent to $x$ representing the position and direction of the billiard ball respectively. For a time $t \geq 0$ and an initial state $(x, v) \in TQ$ with $v$ pointing strictly into $Q$, the \textit{billiard flow} $\phi_t$ associates $(x, v)$ with a point $\phi_t(x,v)$ in $TQ$ obtained by moving $x$ along a straight line in the direction of $v$ with unit velocity until the moment $t$. Whenever the trajectory enters the boundary $\partial Q$ before time $t$, the direction $v$ is reflected by Descartes' law (the angle of incident is equal to the angle of reflection). This motion is determined for all positive time $t$ unless the flow reaches a vertex of the polygon. 
	
	One particular strategy to study polygonal billiards is to encode the trajectory of a billiard ball by the sequence of sides of the polygonal domain hit by the billiard ball along its trajectory.
	Let the set of edges of $Q$ be labelled by a finite set $\mathcal{A}$. As illustrated in Figure \ref{fig:non_simply_connected}, each infinite billiard trajectory $\{\phi_t(x,v)\}_{t \geq 0}$ can be coded by a sequence $\alpha \in \mathcal{A}^\mathbb{N}$ labelling the edges that the trajectory hits in order. Conversely, if $\alpha \in \mathcal{A}^\mathbb{N}$ is a sequence, we define $X(\alpha)$ to be the set of couples $(x,v) \in TQ$ with $x$ lying on $\partial Q$ such that the trajectory from $(x,v)$ is coded by $\alpha$. This article aims to prove the following theorem.

	\begin{theorem*}\label{thm:main theorem}
		Let $Q$ be a polygon such that all the holes of $Q$ have non-zero minimal diameters. Suppose the edges of $Q$ are indexed by $\mathcal{A}$. Then for any non-periodic sequence $\alpha \in \mathcal{A}^\mathbb{N}$, the set $X(\alpha)$ contains at most one point.
	\end{theorem*}

	This result in the case of simply connected polygons was proven in \cite{Galperin1995}. This classical result has applications for example in \cite{BT11} and \cite{BT12}, where it was shown that the shape of a simply connected polygonal domain satisfying certain conditions could be uniquely determined, up to similarity, by the encoding of certain billiard trajectories. Other applications can be found in articles such as \cite{BT14}, \cite{duchin2018you} and \cite{calderon2018hear}.  
	
	The proof in \cite{Galperin1995} relies on a particular property of simply connected polygon that $X(\alpha)$ consists of so-called `parallel phase points' whose base points form a single interval on an edge. If $Q$ is not simply connected, however, the base points of $X(\alpha)$ may form more than one intervals due to the presence of `holes' in $Q$ and consequently the method in \cite{Galperin1995} is not directly applicable to this case. Thus, we prove the main theorem for a more general class of polygons using a modified method. The main new idea in our approach is a way of coding parallel billiard trajectories (the $\mathcal{B}$-codings introduced in Section \ref{sec:encoding parallel trajectories}) which captures certain information about the positions of holes relative to the trajectories being coded. 

	This article is structured as follows. In Section \ref{sec:basic definitions}, we state the basic definitions of the dynamical system associated with polygonal billiards, and recall the technique of `trajectory unfolding' - a useful visualisation tool for later proofs. From Section \ref{sec:partition and encoding} to Section \ref{sec:encoding parallel trajectories}, we set up various tools for the proof of the main theorem. In particular, Section \ref{sec:partition and encoding} reviews some classical methods to code a billiard trajectory and introduces a partition of the phase space adapted to non-simply connected polygons. In Section \ref{sec:generalised trajectories}, we define `generalised trajectories' as the limits of physical billiard trajectories, and discuss some of their properties. In Section \ref{sec:encoding parallel trajectories}, we introduce a new coding method adapted to parallel trajectories in non-simply connected polygons. Especially, we prove Lemma \ref{lemma:approximate_by_geodesic} - a key result enabling the construction of generalised trajectories using the limit of trajectory codings. Finally, we put everything together in Section \ref{sec:the proof} to prove the main theorem.

	\bigskip

	\noindent \textbf{Acknowledgement.} \quad I would like to thank Prof. Serge Troubetzkoy for his guidance and his insightful suggestions during many discussions, without which this work would not have been possible. I would also like to thank \textit{Institut de Mathématiques de Marseille} for its conducive environment and its support during my stay. Special thanks should also go to the referee whose suggestions had substantially improved the clarity of this article.

	\begin{figure}[htbp]
		\centering
		\begin{tikzpicture}[line cap=round,line join=round,>=triangle 45,x=1.0cm,y=1.0cm]
			\clip(0.057263554898884905,-2.2750066544163365) rectangle (9.2789328768408,3.9452062313706797);
			\fill[line width=2.pt,color=zzttqq,fill=zzttqq,fill opacity=0.10000000149011612] (0.52,1.78) -- (2.88,3.66) -- (7.08,2.22) -- (4.3,1.12) -- (5.92,-1.6) -- (1.6,-0.7) -- cycle;
			\fill[line width=2.pt,color=ffffff,fill=ffffff,fill opacity=1.0] (2.94,2.08) -- (1.82,1.32) -- (3.3,0.48) -- cycle;
			\draw [line width=0.8pt] (1.6,-0.7)-- (0.52,1.78);
			\draw [line width=0.8pt] (0.52,1.78)-- (2.88,3.66);
			\draw [line width=0.8pt] (2.88,3.66)-- (7.08,2.22);
			\draw [line width=0.8pt] (7.08,2.22)-- (4.3,1.12);
			\draw [line width=0.8pt] (4.3,1.12)-- (5.92,-1.6);
			\draw [line width=0.8pt] (5.92,-1.6)-- (1.6,-0.7);
			\draw [line width=0.8pt] (1.82,1.32)-- (3.3,0.48);
			\draw [line width=0.8pt] (3.3,0.48)-- (2.94,2.08);
			\draw [line width=0.8pt] (2.94,2.08)-- (1.82,1.32);
			\draw [line width=0.8pt, dotted] (3.3514808652246257,-1.064891846921797)-- (3.8729875821767714,3.31954711468225);
			\draw [line width=0.8pt, dotted] (3.8729875821767714,3.31954711468225)-- (2.990644613281017,1.8549128298621465);
			\draw [line width=0.8pt, dotted] (2.990644613281017,1.8549128298621465)-- (4.432115479116965,0.8981764795073149);
			\draw [line width=0.8pt, dotted] (4.432115479116965,0.8981764795073149)-- (4.632233426850734,-1.3317152972605708);
			\draw [line width=0.8pt, dotted] (4.632233426850734,-1.3317152972605708)-- (5.172077339215268,-0.3442286189293386);
			\draw [line width=0.8pt, dotted] (5.172077339215268,-0.3442286189293386)-- (1.4512838029530408,-0.3585035475217969);
			\draw [line width=0.8pt, dotted] (1.4512838029530408,-0.3585035475217969)-- (2.6064991820117447,0.8736085723717113);
			\draw [line width=0.8pt, dotted] (2.6064991820117447,0.8736085723717113)-- (2.123956016313511,-0.8091575033986482);
			\draw [->,line width=1.2pt] (3.3514808652246257,-1.064891846921797) -- (3.492067772266861,0.11705786281501096);
			\draw (3.1673699977924024,-1.1749253361876284) node[anchor=north west] {$a$};
			\draw (5.489763891830794,0.07454974130670684) node[anchor=north west] {$b$};
			\draw (5.8700389154160275,1.5277435814359879) node[anchor=north west] {$c$};
			\draw (5.435438888461475,3.3612124451505014) node[anchor=north west] {$d$};
			\draw (1.184507374812256,3.130331180830896) node[anchor=north west] {$e$};
			\draw (0.7227448461730438,0.5498935207882474) node[anchor=north west] {$f$};
			\draw (1.8771511677710742,1.0116560494274582) node[anchor=north west] {$g$};
			\draw (3.2216950011617214,1.378349822170361) node[anchor=north west] {$h$};
			\draw (2.067288679563691,2.111737367656166) node[anchor=north west] {$i$};
			\draw (1.7820824118747658,-1.650269115669169) node[anchor=north west] {$\alpha = (a, d, h, b, a, b, f, g, a, \cdots)$};
			\begin{scriptsize}
			\draw [fill=ududff] (0.52,1.78) circle (1.0pt);
			\draw [fill=ududff] (2.88,3.66) circle (1.0pt);
			\draw [fill=ududff] (7.08,2.22) circle (1.0pt);
			\draw [fill=ududff] (4.3,1.12) circle (1.0pt);
			\draw [fill=ududff] (5.92,-1.6) circle (1.0pt);
			\draw [fill=ududff] (1.6,-0.7) circle (1.0pt);
			\draw [fill=ududff] (2.94,2.08) circle (1.0pt);
			\draw [fill=ududff] (1.82,1.32) circle (1.0pt);
			\draw [fill=ududff] (3.3,0.48) circle (1.0pt);
			\end{scriptsize}
			\end{tikzpicture}
		\caption{Coding a billiard trajectory by a sequence $\alpha$.}
		\label{fig:non_simply_connected}
	\end{figure}

	\section{The Billiard Map and Trajectory Unfolding}\label{sec:basic definitions}
	
	A \textit{polygon} is defined to be a bounded connected region in $\R^2$ bounded by finitely many straight line segments, each of which is called an \textit{edge}. The end points of the edges are called \textit{vertices}. The union of the edges of a polygon $Q$ is called the \textit{boundary} of $Q$, denoted by $\partial Q$. In general, a polygon may not be simply connected as shown in Figure~\ref{fig:non_simply_connected}. In this case, the complement of the interior of $Q$ in $\R^2$ contains finitely many bounded connected components, each of which will be called a \textit{hole} in $Q$. 
	Note that under the assumption of the main theorem that all holes of $Q$ have non-zero minimal diameters, the holes may still have empty interior. For example, the holes may be in the form of a broken line. However, the holes must not be reduced to a single segment, i.e, a `slit' in $Q$.

	Recall that the set $TQ = Q \times S^1$ consists of couples $(x, v)$ where $x$ is a point in $Q$ and $v$ a unit vector tangent to $x$. We will call $x$ the \textit{base point} of $(x,v)$. The billiard flow $\phi_t$ can be defined on a suitable subset of $TQ$. We define the \textit{phase space} $V$ as a subset of $TQ$ consisting of the couples $(x, v)$ such that $x$ lies on $\partial Q$ and $v$ points strictly into $Q$. For $(x,v) \in V$, define $f$ to be the first return map to $V$ under the billiard flow, i.e. $f(x,v) = \phi_{t_0}(x,v)$ where $t_0> 0$ is the smallest value such that $\phi_{t_0}(x,v) \in V$. The map $f$ is called the \textit{billiard map}.

	Following \cite{ZemlyakovKatok}, we introduce a method to visualise billiard trajectories by `unfolding'.
	Let $p = (x, v)$ be a phase point with an infinite orbit $\{f^n(p)\mid n \geq 0\}$ under $f$. 
	Let $Q_0 = Q$. Supposing the base point of $f(p)$ lies in $e$, we obtain a polygon $Q_1$ by reflecting $Q_0$ about the edge $e$. 
	Define $\gamma_1$ to be the image in $Q_1$ of the line segment in $Q$ joining the base points of $p$ and $f(p)$. 
	Continuing this way, we obtain a sequence of polygons $Q_n$ where $Q_n$ is a reflection of $Q_{n-1}$ about the edge $e'$ containing the base point of $f^n(p)$. 
	The line segment $\gamma_n$ is the image in $Q_n$ of the line segment in $Q$ joining the base points of $f^n(p)$ and $f^{n+1}(p)$.

	By identifying the edges of reflection between $Q_n$ and $Q_{n+1}$, an `infinite corridor' $Q^\infty = \bigcup_{i \geq 0}Q_i$ can be constructed with a natural Riemannian metric inherited from $Q$. We will call $Q^\infty$ the \textit{unfolding of the billiard trajectory from $p$}.
	Figure~\ref{fig:unfold_convex} illustrates the unfolding of a polygon $Q$. 
	The edge shared between $Q_n$ and $Q_{n+1}$ will be called a \textit{reflecting edge}. 
	The infinite corridor $Q^\infty$ may or may not be embedded in $\R^2$ depending on the convexity of $Q$. 
	For $0 \leq n \leq m \leq \infty$, put $Q_n^m := \bigcup_{i = n}^m Q_i$. If $m < \infty$, then we will call $Q_n^m$ a \textit{finite corridor}.
	Let $\Gamma_p$ be the piecewise linear curve in $Q^\infty$ joining the line segments $\gamma_0, \gamma_1, \gamma_2, \cdots$ end to end. 
	According to the law of reflection, $\Gamma_p$ is the trajectory in $Q^\infty$ of a straight line flow which begins from the point $x$ in $Q_0$ in the direction $v$.

	\begin{figure}[htbp]
		\centering
		\begin{tikzpicture}[line cap=round,line join=round,>=triangle 45,x=1.0cm,y=1.0cm]
			\clip(-1.4753829709493762,-1.995973668811395) rectangle (10.626882934227394,5.180754644188562);
			\fill[line width=2.pt,color=zzttqq,fill=zzttqq,fill opacity=0.10000000149011612] (0.42,2.02) -- (1.84,2.74) -- (2.84,2.04) -- (3.16,0.56) -- (1.48,0.) -- (0.2,1.1) -- cycle;
			\fill[line width=2.pt,color=ffffff,fill=ffffff,fill opacity=1.0] (1.56,1.68) -- (0.92,1.5) -- (1.56,2.26) -- (2.1,1.72) -- cycle;
			\fill[line width=2.pt,color=zzttqq,fill=zzttqq,fill opacity=0.10000000149011612] (5.052100488485695,3.0215352407536646) -- (3.461493370551291,3.0905931612002795) -- (2.84,2.04) -- (3.16,0.56) -- (4.921284019539429,0.7440614096301474) -- (5.632519190509421,2.2745987438939297) -- cycle;
			\fill[line width=2.pt,color=ffffff,fill=ffffff,fill opacity=1.0] (4.154389392882066,2.240949057920447) -- (4.811584089323099,2.341423586880671) -- (3.9147801814375436,2.769141660851361) -- (3.6460990928122823,2.0542916957431965) -- cycle;
			\fill[line width=2.pt,color=zzttqq,fill=zzttqq,fill opacity=0.10000000149011612] (5.052100488485695,3.021535240753665) -- (5.3780850075539215,4.5799107068490015) -- (6.549639910644462,4.92263378609012) -- (7.904732955018702,4.246969045935247) -- (7.291313190174384,2.585729691688442) -- (5.632519190509421,2.2745987438939297) -- cycle;
			\fill[line width=2.pt,color=ffffff,fill=ffffff,fill opacity=1.0] (6.030245110073837,3.6986121191494568) -- (5.770547709788525,3.0866012856791247) -- (5.577607072077456,4.061267336344503) -- (6.3366732062211995,4.145044445366436) -- cycle;
			\fill[line width=2.pt,color=zzttqq,fill=zzttqq,fill opacity=0.10000000149011612] (9.276394846427571,1.4616964101495409) -- (10.041421194832452,2.8579537955360217) -- (9.37374681286403,3.8798201858547787) -- (7.904732955018703,4.246969045935246) -- (7.291313190174385,2.5857296916884414) -- (8.349832134241694,1.2712192713985804) -- cycle;
			\fill[line width=2.pt,color=ffffff,fill=ffffff,fill opacity=1.0] (8.97301196616818,2.6119828214252347) -- (8.772644542820252,1.9780641392104634) -- (9.552715526882311,2.593441470467027) -- (9.030254159178519,3.150428112368862) -- cycle;
			\fill[line width=2.pt,color=zzttqq,fill=zzttqq,fill opacity=0.10000000149011612] (-0.6764471282123298,0.7441342508074711) -- (-1.1746889481814355,-0.768001685156579) -- (-0.332251088330291,-1.6513467209661559) -- (1.1790422693441929,-1.7451144502176663) -- (1.48,0.) -- (0.2,1.1) -- cycle;
			\fill[line width=2.pt,color=ffffff,fill=ffffff,fill opacity=1.0] (-0.168858306417638,-0.33176239292234233) -- (-0.08716191546131163,0.32802977109956466) -- (-0.7422609184103355,-0.41899452324111763) -- (-0.12718719280999868,-0.8716360061789077) -- cycle;
			\fill[line width=0.4pt,color=zzttqq,fill=zzttqq,pattern=north east lines,pattern color=zzttqq] (-0.6764471282123298,0.7441342508074711) -- (-0.7420410906359197,0.5450602594489449) -- (9.58103153202965,3.5625737953050343) -- (9.41862772029489,3.8111305789008676) -- (4.651644267697687,2.417704646603224) -- (4.811584089323099,2.341423586880671) -- (4.154389392882066,2.240949057920447) -- (3.6460990928122823,2.0542916957431965) -- (3.6754386646287824,2.1323522395523136) -- (0.20423055389158168,1.117691407182978) -- (0.2,1.1) -- cycle;
			\draw [line width=0.8pt] (1.56,2.26)-- (0.92,1.5);
			\draw [line width=0.8pt] (0.92,1.5)-- (1.56,1.68);
			\draw [line width=0.8pt] (1.56,1.68)-- (2.1,1.72);
			\draw [line width=0.8pt] (2.1,1.72)-- (1.56,2.26);
			\draw [line width=0.8pt] (0.42,2.02)-- (1.84,2.74);
			\draw [line width=0.8pt] (1.84,2.74)-- (2.84,2.04);
			\draw [line width=0.8pt] (2.84,2.04)-- (3.16,0.56);
			\draw [line width=0.8pt] (3.16,0.56)-- (1.48,0.);
			\draw [line width=0.8pt] (1.48,0.)-- (0.2,1.1);
			\draw [line width=0.8pt] (0.2,1.1)-- (0.42,2.02);
			\draw [line width=0.8pt] (3.9147801814375436,2.769141660851361)-- (4.811584089323099,2.341423586880671);
			\draw [line width=0.8pt] (4.811584089323099,2.341423586880671)-- (4.154389392882066,2.240949057920447);
			\draw [line width=0.8pt] (4.154389392882066,2.240949057920447)-- (3.6460990928122823,2.0542916957431965);
			\draw [line width=0.8pt] (3.6460990928122823,2.0542916957431965)-- (3.9147801814375436,2.769141660851361);
			\draw [line width=0.8pt] (5.052100488485695,3.0215352407536646)-- (3.461493370551291,3.0905931612002795);
			\draw [line width=0.8pt] (3.461493370551291,3.0905931612002795)-- (2.84,2.04);
			\draw [line width=0.8pt] (3.16,0.56)-- (4.921284019539429,0.7440614096301474);
			\draw [line width=0.8pt] (4.921284019539429,0.7440614096301474)-- (5.632519190509421,2.2745987438939297);
			\draw [line width=0.8pt] (5.632519190509421,2.2745987438939297)-- (5.052100488485695,3.0215352407536646);
			\draw [line width=0.8pt] (6.549639910644462,4.92263378609012)-- (7.904732955018702,4.246969045935248);
			\draw [line width=0.8pt] (5.577607072077456,4.061267336344503)-- (5.770547709788525,3.0866012856791247);
			\draw [line width=0.8pt] (5.770547709788525,3.0866012856791247)-- (6.030245110073837,3.6986121191494568);
			\draw [line width=0.8pt] (6.030245110073837,3.6986121191494568)-- (6.3366732062211995,4.145044445366436);
			\draw [line width=0.8pt] (6.3366732062211995,4.145044445366436)-- (5.577607072077456,4.061267336344503);
			\draw [line width=0.8pt] (5.052100488485695,3.021535240753665)-- (5.3780850075539215,4.5799107068490015);
			\draw [line width=0.8pt] (5.3780850075539215,4.5799107068490015)-- (6.549639910644462,4.92263378609012);
			\draw [line width=0.8pt] (7.904732955018702,4.246969045935247)-- (7.291313190174384,2.585729691688442);
			\draw [line width=0.8pt] (7.291313190174384,2.585729691688442)-- (5.632519190509421,2.2745987438939297);
			\draw [line width=0.8pt] (8.349832134241694,1.2712192713985804)-- (9.27639484642757,1.4616964101495404);
			\draw [line width=0.8pt] (9.37374681286403,3.8798201858547787)-- (7.904732955018704,4.246969045935247);
			\draw [line width=0.8pt] (9.552715526882311,2.593441470467027)-- (8.772644542820252,1.9780641392104634);
			\draw [line width=0.8pt] (8.772644542820252,1.9780641392104634)-- (8.97301196616818,2.6119828214252347);
			\draw [line width=0.8pt] (8.97301196616818,2.6119828214252347)-- (9.030254159178519,3.150428112368862);
			\draw [line width=0.8pt] (9.030254159178519,3.150428112368862)-- (9.552715526882311,2.593441470467027);
			\draw [line width=0.8pt] (9.276394846427571,1.4616964101495409)-- (10.041421194832452,2.8579537955360217);
			\draw [line width=0.8pt] (10.041421194832452,2.8579537955360217)-- (9.37374681286403,3.8798201858547787);
			\draw [line width=0.8pt] (7.291313190174385,2.5857296916884414)-- (8.349832134241694,1.2712192713985804);
			\draw [line width=0.8pt] (-0.7422609184103355,-0.41899452324111763)-- (-0.08716191546131163,0.32802977109956466);
			\draw [line width=0.8pt] (-0.08716191546131163,0.32802977109956466)-- (-0.168858306417638,-0.33176239292234233);
			\draw [line width=0.8pt] (-0.168858306417638,-0.33176239292234233)-- (-0.12718719280999868,-0.8716360061789077);
			\draw [line width=0.8pt] (-0.12718719280999868,-0.8716360061789077)-- (-0.7422609184103355,-0.41899452324111763);
			\draw [line width=0.8pt] (-0.6764471282123298,0.7441342508074711)-- (-1.1746889481814355,-0.768001685156579);
			\draw [line width=0.8pt] (-1.1746889481814355,-0.768001685156579)-- (-0.332251088330291,-1.6513467209661559);
			\draw [line width=0.8pt] (-0.332251088330291,-1.6513467209661559)-- (1.1790422693441929,-1.7451144502176663);
			\draw [line width=0.8pt] (1.1790422693441929,-1.7451144502176663)-- (1.48,0.);
			\draw [line width=0.8pt] (0.2,1.1)-- (-0.6764471282123298,0.7441342508074711);
			\draw [line width=0.8pt] (-0.7420410906359197,0.5450602594489449)-- (0.4427525265036758,0.8913845475359036);
			\draw [line width=0.8pt] (0.4427525265036758,0.8913845475359036)-- (2.931082682561552,1.6187425931528214);
			\draw [line width=0.8pt] (2.931082682561552,1.6187425931528214)-- (5.547796228721604,2.3836280912611443);
			\draw [line width=0.8pt] (5.547796228721604,2.3836280912611443)-- (7.418614729956154,2.930482730083551);
			\draw [line width=0.8pt] (7.418614729956154,2.930482730083551)-- (9.58103153202965,3.5625737953050343);
			\draw [->,line width=0.8pt, -{Latex[width=1.5mm]}] (-0.7420410906359197,0.5450602594489449) -- (0.01147214438639721,0.7653179743016222);
			\draw (0.420356583427964,-0.7434314632406476) node[anchor=north west] {$Q_0$};
			\draw (2.349948629847757,0.9830456309244365) node[anchor=north west] {$Q_1$};
			\draw (3.636343327460952,1.3046443053277366) node[anchor=north west] {$Q_2$};
			\draw (6.784625087409036,4.3175160971060205) node[anchor=north west] {$Q_3$};
			\draw (8.003314800937325,2.286366574558863) node[anchor=north west] {$Q_4$};
			\draw (-0.0027995671026922794,0.9153406468395313) node[anchor=north west] {$\gamma_0$};
			\draw (1.1989639004043717,1.2369393212428312) node[anchor=north west] {$\gamma_1$};
			\draw (4.296466922288776,2.1001778683253733) node[anchor=north west] {$\gamma_2$};
			\draw (6.361468936878379,2.658743987025842) node[anchor=north west] {$\gamma_3$};
			\draw (8.172577261149588,3.268088843789989) node[anchor=north west] {$\gamma_4$};
			\draw [line width=0.8pt] (9.58103153202965,3.5625737953050343)-- (10.305948715218808,3.77447266423725);
			\draw (10.000611831442024,3.7250974863630995) node[anchor=north west] {$\Gamma_p$};
			\draw (-0.7644806380578736,0.6106682184574577) node[anchor=north west] {$p$};
			\begin{scriptsize}
			\draw [fill=ududff] (0.42,2.02) circle (1.0pt);
			\draw [fill=ududff] (1.84,2.74) circle (1.0pt);
			\draw [fill=ududff] (2.84,2.04) circle (1.0pt);
			\draw [fill=ududff] (3.16,0.56) circle (1.0pt);
			\draw [fill=ududff] (1.48,0.) circle (1.0pt);
			\draw [fill=ududff] (0.2,1.1) circle (1.0pt);
			\draw [fill=ududff] (1.56,1.68) circle (1.0pt);
			\draw [fill=ududff] (0.92,1.5) circle (1.0pt);
			\draw [fill=ududff] (1.56,2.26) circle (1.0pt);
			\draw [fill=ududff] (2.1,1.72) circle (1.0pt);
			\draw [fill=ududff] (5.052100488485695,3.0215352407536646) circle (1.0pt);
			\draw [fill=ududff] (3.461493370551291,3.0905931612002795) circle (1.0pt);
			\draw [fill=ududff] (2.84,2.04) circle (1.0pt);
			\draw [fill=ududff] (3.16,0.56) circle (1.0pt);
			\draw [fill=ududff] (4.921284019539429,0.7440614096301474) circle (1.0pt);
			\draw [fill=ududff] (5.632519190509421,2.2745987438939297) circle (1.0pt);
			\draw [fill=ududff] (4.154389392882066,2.240949057920447) circle (1.0pt);
			\draw [fill=ududff] (4.811584089323099,2.341423586880671) circle (1.0pt);
			\draw [fill=ududff] (3.9147801814375436,2.769141660851361) circle (1.0pt);
			\draw [fill=ududff] (3.6460990928122823,2.0542916957431965) circle (1.0pt);
			\draw [fill=ududff] (3.9147801814375436,2.769141660851361) circle (1.0pt);
			\draw [fill=ududff] (4.811584089323099,2.341423586880671) circle (1.0pt);
			\draw [fill=ududff] (4.811584089323099,2.341423586880671) circle (1.0pt);
			\draw [fill=ududff] (4.154389392882066,2.240949057920447) circle (1.0pt);
			\draw [fill=ududff] (4.154389392882066,2.240949057920447) circle (1.0pt);
			\draw [fill=ududff] (3.6460990928122823,2.0542916957431965) circle (1.0pt);
			\draw [fill=ududff] (3.6460990928122823,2.0542916957431965) circle (1.0pt);
			\draw [fill=ududff] (3.9147801814375436,2.769141660851361) circle (1.0pt);
			\draw [fill=ududff] (5.052100488485695,3.0215352407536646) circle (1.0pt);
			\draw [fill=ududff] (3.461493370551291,3.0905931612002795) circle (1.0pt);
			\draw [fill=ududff] (3.461493370551291,3.0905931612002795) circle (1.0pt);
			\draw [fill=ududff] (2.84,2.04) circle (1.0pt);
			\draw [fill=ududff] (3.16,0.56) circle (1.0pt);
			\draw [fill=ududff] (4.921284019539429,0.7440614096301474) circle (1.0pt);
			\draw [fill=ududff] (4.921284019539429,0.7440614096301474) circle (1.0pt);
			\draw [fill=ududff] (5.632519190509421,2.2745987438939297) circle (1.0pt);
			\draw [fill=ududff] (5.632519190509421,2.2745987438939297) circle (1.0pt);
			\draw [fill=ududff] (5.052100488485695,3.0215352407536646) circle (1.0pt);
			\draw [fill=ududff] (5.052100488485695,3.021535240753665) circle (1.0pt);
			\draw [fill=ududff] (5.3780850075539215,4.5799107068490015) circle (1.0pt);
			\draw [fill=ududff] (6.549639910644462,4.92263378609012) circle (1.0pt);
			\draw [fill=ududff] (7.904732955018702,4.246969045935247) circle (1.0pt);
			\draw [fill=ududff] (7.291313190174384,2.585729691688442) circle (1.0pt);
			\draw [fill=ududff] (5.632519190509421,2.2745987438939297) circle (1.0pt);
			\draw [fill=ududff] (6.030245110073837,3.6986121191494568) circle (1.0pt);
			\draw [fill=ududff] (5.770547709788525,3.0866012856791247) circle (1.0pt);
			\draw [fill=ududff] (5.577607072077456,4.061267336344503) circle (1.0pt);
			\draw [fill=ududff] (6.3366732062211995,4.145044445366436) circle (1.0pt);
			\draw [fill=ududff] (6.549639910644462,4.92263378609012) circle (1.0pt);
			\draw [fill=ududff] (7.904732955018702,4.246969045935248) circle (1.0pt);
			\draw [fill=ududff] (5.577607072077456,4.061267336344503) circle (1.0pt);
			\draw [fill=ududff] (5.770547709788525,3.0866012856791247) circle (1.0pt);
			\draw [fill=ududff] (5.770547709788525,3.0866012856791247) circle (1.0pt);
			\draw [fill=ududff] (6.030245110073837,3.6986121191494568) circle (1.0pt);
			\draw [fill=ududff] (6.030245110073837,3.6986121191494568) circle (1.0pt);
			\draw [fill=ududff] (6.3366732062211995,4.145044445366436) circle (1.0pt);
			\draw [fill=ududff] (6.3366732062211995,4.145044445366436) circle (1.0pt);
			\draw [fill=ududff] (5.577607072077456,4.061267336344503) circle (1.0pt);
			\draw [fill=ududff] (5.052100488485695,3.021535240753665) circle (1.0pt);
			\draw [fill=ududff] (5.3780850075539215,4.5799107068490015) circle (1.0pt);
			\draw [fill=ududff] (5.3780850075539215,4.5799107068490015) circle (1.0pt);
			\draw [fill=ududff] (6.549639910644462,4.92263378609012) circle (1.0pt);
			\draw [fill=ududff] (7.904732955018702,4.246969045935247) circle (1.0pt);
			\draw [fill=ududff] (7.291313190174384,2.585729691688442) circle (1.0pt);
			\draw [fill=ududff] (7.291313190174384,2.585729691688442) circle (1.0pt);
			\draw [fill=ududff] (5.632519190509421,2.2745987438939297) circle (1.0pt);
			\draw [fill=ududff] (9.276394846427571,1.4616964101495409) circle (1.0pt);
			\draw [fill=ududff] (10.041421194832452,2.8579537955360217) circle (1.0pt);
			\draw [fill=ududff] (9.37374681286403,3.8798201858547787) circle (1.0pt);
			\draw [fill=ududff] (7.904732955018703,4.246969045935246) circle (1.0pt);
			\draw [fill=ududff] (7.291313190174385,2.5857296916884414) circle (1.0pt);
			\draw [fill=ududff] (8.349832134241694,1.2712192713985804) circle (1.0pt);
			\draw [fill=ududff] (8.97301196616818,2.6119828214252347) circle (1.0pt);
			\draw [fill=ududff] (8.772644542820252,1.9780641392104634) circle (1.0pt);
			\draw [fill=ududff] (9.552715526882311,2.593441470467027) circle (1.0pt);
			\draw [fill=ududff] (9.030254159178519,3.150428112368862) circle (1.0pt);
			\draw [fill=ududff] (8.349832134241694,1.2712192713985804) circle (1.0pt);
			\draw [fill=ududff] (9.27639484642757,1.4616964101495404) circle (1.0pt);
			\draw [fill=ududff] (9.37374681286403,3.8798201858547787) circle (1.0pt);
			\draw [fill=ududff] (7.904732955018704,4.246969045935247) circle (1.0pt);
			\draw [fill=ududff] (9.552715526882311,2.593441470467027) circle (1.0pt);
			\draw [fill=ududff] (8.772644542820252,1.9780641392104634) circle (1.0pt);
			\draw [fill=ududff] (8.772644542820252,1.9780641392104634) circle (1.0pt);
			\draw [fill=ududff] (8.97301196616818,2.6119828214252347) circle (1.0pt);
			\draw [fill=ududff] (8.97301196616818,2.6119828214252347) circle (1.0pt);
			\draw [fill=ududff] (9.030254159178519,3.150428112368862) circle (1.0pt);
			\draw [fill=ududff] (9.030254159178519,3.150428112368862) circle (1.0pt);
			\draw [fill=ududff] (9.552715526882311,2.593441470467027) circle (1.0pt);
			\draw [fill=ududff] (10.041421194832452,2.8579537955360217) circle (1.0pt);
			\draw [fill=ududff] (10.041421194832452,2.8579537955360217) circle (1.0pt);
			\draw [fill=ududff] (9.37374681286403,3.8798201858547787) circle (1.0pt);
			\draw [fill=ududff] (7.291313190174385,2.5857296916884414) circle (1.0pt);
			\draw [fill=ududff] (8.349832134241694,1.2712192713985804) circle (1.0pt);
			\draw [fill=ududff] (9.27639484642757,1.4616964101495404) circle (1.0pt);
			\draw [fill=ududff] (8.349832134241694,1.2712192713985804) circle (1.0pt);
			\draw [fill=ududff] (9.27639484642757,1.4616964101495404) circle (1.0pt);
			\draw [fill=ududff] (8.349832134241694,1.2712192713985804) circle (1.0pt);
			\draw [fill=ududff] (-0.6764471282123298,0.7441342508074711) circle (1.0pt);
			\draw [fill=ududff] (-1.1746889481814355,-0.768001685156579) circle (1.0pt);
			\draw [fill=ududff] (-0.332251088330291,-1.6513467209661559) circle (1.0pt);
			\draw [fill=ududff] (1.1790422693441929,-1.7451144502176663) circle (1.0pt);
			\draw [fill=ududff] (1.48,0.) circle (1.0pt);
			\draw [fill=ududff] (0.2,1.1) circle (1.0pt);
			\draw [fill=ududff] (-0.168858306417638,-0.33176239292234233) circle (1.0pt);
			\draw [fill=ududff] (-0.08716191546131163,0.32802977109956466) circle (1.0pt);
			\draw [fill=ududff] (-0.7422609184103355,-0.41899452324111763) circle (1.0pt);
			\draw [fill=ududff] (-0.12718719280999868,-0.8716360061789077) circle (1.0pt);
			\draw [fill=ududff] (-0.7422609184103355,-0.41899452324111763) circle (1.0pt);
			\draw [fill=ududff] (-0.08716191546131163,0.32802977109956466) circle (1.0pt);
			\draw [fill=ududff] (-0.08716191546131163,0.32802977109956466) circle (1.0pt);
			\draw [fill=ududff] (-0.168858306417638,-0.33176239292234233) circle (1.0pt);
			\draw [fill=ududff] (-0.168858306417638,-0.33176239292234233) circle (1.0pt);
			\draw [fill=ududff] (-0.12718719280999868,-0.8716360061789077) circle (1.0pt);
			\draw [fill=ududff] (-0.12718719280999868,-0.8716360061789077) circle (1.0pt);
			\draw [fill=ududff] (-0.7422609184103355,-0.41899452324111763) circle (1.0pt);
			\draw [fill=ududff] (-0.6764471282123298,0.7441342508074711) circle (1.0pt);
			\draw [fill=ududff] (-1.1746889481814355,-0.768001685156579) circle (1.0pt);
			\draw [fill=ududff] (-1.1746889481814355,-0.768001685156579) circle (1.0pt);
			\draw [fill=ududff] (-0.332251088330291,-1.6513467209661559) circle (1.0pt);
			\draw [fill=ududff] (-0.332251088330291,-1.6513467209661559) circle (1.0pt);
			\draw [fill=ududff] (1.1790422693441929,-1.7451144502176663) circle (1.0pt);
			\draw [fill=ududff] (1.1790422693441929,-1.7451144502176663) circle (1.0pt);
			\draw [fill=ududff] (1.48,0.) circle (1.0pt);
			\draw [fill=ududff] (0.2,1.1) circle (1.0pt);
			\draw [fill=ududff] (-0.6764471282123298,0.7441342508074711) circle (1.0pt);
			\draw [fill=ududff] (-0.3322510883302909,-1.6513467209661568) circle (1.0pt);
			\draw [fill=ududff] (-0.3322510883302909,-1.6513467209661568) circle (1.0pt);
			\draw [fill=zzttqq] (-0.7420410906359197,0.5450602594489449) circle (1.0pt);
			\draw [fill=zzttqq] (0.4427525265036758,0.8913845475359036) circle (1.0pt);
			\draw [fill=zzttqq] (2.931082682561552,1.6187425931528214) circle (1.0pt);
			\draw [fill=zzttqq] (5.547796228721604,2.3836280912611443) circle (1.0pt);
			\draw [fill=zzttqq] (7.418614729956154,2.930482730083551) circle (1.0pt);
			\draw [fill=zzttqq] (9.58103153202965,3.5625737953050343) circle (1.0pt);
			\end{scriptsize}
			\end{tikzpicture}
		\caption{Unfolding a billiard trajectory into $Q^\infty = Q_0 \cup Q_1 \cup Q_2 \cup \cdots$. The shaded region shows an open left $d$-strip of the unfolded trajectory $\Gamma_p$.}
		\label{fig:unfold_convex}
	\end{figure}

	Let $\Gamma$ be any ray contained in $Q^\infty$ pointing in the direction $(a,b) \in \R^2$. For example we can take $\Gamma = \Gamma_p$ and let the direction of $\Gamma_p$ be the direction of the billiard trajectory from $p$. Relative to the direction $(a,b)$ of $\Gamma$, there is a natural way to define the `left side' and the `right side' of $\Gamma$ in $Q^\infty$.
	The following terminology will be frequently used:
	for $d > 0$, the \textit{open left (resp. right)} $d$-\textit{strip} of $\Gamma$ is defined to be the set of points in $Q^\infty \setminus \Gamma$ which can be obtained by translating a point in $\Gamma$ along the direction $(-b, a)$ (resp. in the direction $(b, -a)$) by a distance $< d$.
	The \textit{closed left/right $d$-strip} of $\Gamma$ is defined as the closure of the left/right $d$-strip of $\Gamma$ in $Q^\infty$.

	\section{Partitions of the Phase Space} \label{sec:partition and encoding}

	We follow the methods in \cite{katok1987} to parametrise the phase space $V \subset TQ$.
	Fix a vertex $v_0$ of $Q$ and a counter-clockwise orientation on the boundary $\partial Q$. Let $L$ be the total length of the boundary of $Q$. Each point on $\partial Q$ can be uniquely assigned a spatial coordinate $x \in [0, L)$ according to the orientation on $\partial Q$ such that $v_0$ corresponds to $x = 0$. Each phase point in $V$ can be assigned a pair $(x, \theta)$ with  $ 0 < \theta < \pi$ and $ 0 \leq x < L$. 
	Here the angular coordinate $\theta$ of the phase point is the angle between its direction and the positive orientation on $\partial Q$. This pair $(x, \theta)$ is uniquely defined unless $x$ is a vertex of $Q$, in which case there are exactly two ways to assign $\theta$: it could be the angle measured from either of the two edges having $x$ as an endpoint. Let $x \in e$ for some edge $e \subset \partial Q$. Since the phase points always point into the polygon, it is natural to define the \textit{left side} (resp. \textit{right side}) of $x$ to be the set of points in $e$ whose spatial coordinates are smaller than (resp. greater than) that of $x$.

	Let $\{e_a \mid a \in \mathcal{A}\}$ be the set of edges of $Q$ indexed by the finite set $\mathcal{A}$. We define a family $\{E_a\}_{a \in \mathcal{A}}$ of disjoint open subsets in $V$ where $E_a$ consists of phase points $(x,v)$ with $x$ lying in the interior of the edge $e_a$. Then the set \[\mathring{V} = \bigcup_{a \in \mathcal{A}} E_a\] is the set of phase points that are not based on a vertex and therefore have unique coordinates in the form $(x, \theta) \in [0, L) \times (0, \pi)$. 

	\begin{definition}\label{def:edge coding}
		Let $p$ be a phase point such that $f^n(p)$ is defined up to $N \leq \infty$. The \textit{edge coding} of $p$ is the sequence $(\alpha_n)_{0 \leq n \leq N} \in \mathcal{A}^{N + 1}$ satisfying 
		\[f^n(p) \in E_{\alpha_n}\]
		for all $n = 0, 1, \ldots, N$.
	\end{definition}

	The partition of $\mathring{V}$ into the disjoint union $\bigcup_{a \in \mathcal{A}} E_a$ does not provide geometric information about the presence of holes in a non-simply connected domain. To remedy this, we refine this partition by partitioning each $E_a$ further: for $a, b \in \mathcal{A}$, put 
	\[V_{a,b} = E_a \cap f^{-1}(E_b).\] 
	In other words, the subset $V_{a,b}$ consists of the phase points which will be sent from edge $e_a$ to edge $e_b$ by the billiard map $f$. If $Q$ is not simply connected, then $V_{a,b}$ may not be connected in $V$ as illustrated in Figure~\ref{fig:V_not_connected}. Let \[\{V_{a,b}^i\}_{i\in I_{a,b}}\] be the collection of connected components of $V_{a,b}$ for a set $I_{a,b}$ depending only on $a,b$. The set $I_{a,b}$ is finite as there are finitely many holes in $Q$ that separate $V_{a,b}$. It follows that 
	\[\mathring{V} = \bigcup_{a, b \in \mathcal{A}} \bigcup_{i\in I_{a,b}} V_{a,b}^i\]
	is a partition of $\mathring{V}$ into finitely many open subsets $V_{a,b}^i$ indexed by $\bigsqcup_{a, b \in \mathcal{A}} I_{a,b}$.

	\begin{definition}\label{def:V coding}
		Let $p$ be a phase point such that $f^n(p)$ is defined up to $N \leq \infty$. Then $p$ can be uniquely associated with a sequence $\xi = (\xi_{n})_{0 \leq n \leq N - 1} \in \left(\bigsqcup_{a,b \in \mathcal{A}} I_{a,b}\right)^{N}$ such that for $0 \leq n \leq N-1$
		\[f^n(p)  \in V^i_{a,b} \quad \text{ if and only if } \quad \xi_{n} = i \in I_{a,b}.\]
		We will call $\xi$ the \textit{$V$-coding} of $p$.
	\end{definition}

	We can verify the following result using the intermediate value theorem.

	\begin{remark}\label{rem:V partition detects holes}
		Let $a, b \in \mathcal{A}$ and suppose $p \in V^i_{a,b}$ and $q \in V^j_{a,b}$ are two phase points. Then $i = j$ if and only if there are no holes between the billiard trajectory from $p$ to $f(p)$ and the billiard trajectory from $q$ to $f(q)$. Hence, if two phase points have the same $V$-coding, then there are no holes between the trajectories from the two phase points.
	\end{remark}

	\begin{figure}[htbp]
		\centering
		\begin{tikzpicture}[line cap=round,line join=round,>=triangle 45,x=.6cm,y=.6cm]
			\clip(-1.9987094724900802,-4.154987180941571) rectangle (6.398896426545703,4.3223428247305735);
			\fill[line width=2.pt,color=ududff,fill=ududff,fill opacity=0.10000000149011612] (0.74,3.08) -- (-1.14,0.68) -- (0.3,-2.86) -- (4.37921905842317,-3.038849688031759) -- (5.335908338060158,0.) -- (3.342805672149767,3.2062053318207613) -- cycle;
			\fill[line width=2.pt,color=ffffff,fill=ffffff,fill opacity=1.0] (2.42,1.36) -- (1.68,0.34) -- (1.94,-0.46) -- (2.64,0.06) -- cycle;
			\draw [line width=1.2pt,color=ududff] (0.74,3.08)-- (-1.14,0.68);
			\draw [line width=1.2pt,color=ududff] (-1.14,0.68)-- (0.3,-2.86);
			\draw [line width=1.2pt,color=ududff] (0.3,-2.86)-- (4.37921905842317,-3.038849688031759);
			\draw [line width=1.2pt,color=ududff] (4.37921905842317,-3.038849688031759)-- (5.335908338060158,0.);
			\draw [line width=1.2pt,color=ududff] (5.335908338060158,0.)-- (3.342805672149767,3.2062053318207613);
			\draw [line width=1.2pt,color=ududff] (3.342805672149767,3.2062053318207613)-- (0.74,3.08);
			\draw [line width=1.2pt,color=ududff] (2.42,1.36)-- (1.68,0.34);
			\draw [line width=1.2pt,color=ududff] (1.68,0.34)-- (1.94,-0.46);
			\draw [line width=1.2pt,color=ududff] (1.94,-0.46)-- (2.64,0.06);
			\draw [line width=1.2pt,color=ududff] (2.64,0.06)-- (2.42,1.36);
			\draw [line width=1.2pt,dotted,color=sqsqsq] (1.1327159237635374,-2.8965096801743604)-- (1.2097266816847787,3.1027761958418236);
			\draw [line width=1.2pt,dotted,color=sqsqsq] (3.5036336341977123,-3.0004604331952187)-- (2.9375039543166164,3.1865529857643575);
			\draw [->,line width=1.2pt,color=sqsqsq] (1.1327159237635374,-2.8965096801743604) -- (1.163592505429342,-0.49116469607794855);
			\draw [->,line width=1.2pt,color=sqsqsq] (3.5036336341977123,-3.0004604331952187) -- (3.2880794082262836,-0.644751245012789);
			\begin{scriptsize}
				\draw [fill=ududff] (0.74,3.08) circle (1.5pt);
				\draw [fill=ududff] (-1.14,0.68) circle (1.5pt);
				\draw [fill=ududff] (0.3,-2.86) circle (1.5pt);
				\draw [fill=ududff] (4.37921905842317,-3.038849688031759) circle (1.5pt);
				\draw [fill=ududff] (5.335908338060158,0.) circle (1.5pt);
				\draw [fill=ududff] (3.342805672149767,3.2062053318207613) circle (1.5pt);
				\draw[color=black] (2.3329669880885042,-3.2647346568349347) node {$e_a$};
				\draw[color=black] (1.9609211571185643,3.8307108338060134) node {$e_b$};
				\draw [fill=ududff] (2.42,1.36) circle (1.5pt);
				\draw [fill=ududff] (1.68,0.34) circle (1.5pt);
				\draw [fill=ududff] (1.94,-0.46) circle (1.5pt);
				\draw [fill=ududff] (2.64,0.06) circle (1.5pt);
				\draw[color=black] (1.3965536018151005,-2.1702525241066313) node {$u$};
				\draw[color=black] (3.077058650028384,-2.3828501418037382) node {$v$};
			\end{scriptsize}
		\end{tikzpicture}
		\caption{Both phase points $u$ and $v$ belong to $V_{a,b}$ because they are mapped from edge $e_a$ to edge $e_b$. They belong to two different connected components of $V_{a,b}$ due to the hole between their trajectories.}
		\label{fig:V_not_connected}
	\end{figure}

	\section{Generalised Trajectories and Their Codings}\label{sec:generalised trajectories}

	A billiard trajectory does not contain any vertex of the polygon. When the trajectory is unfolded into an infinite corridor (see Section \ref{sec:basic definitions}), it intersects the reflecting edges transversally and never intersects non-reflecting edges. These properties are not necessarily preserved when we take the `limit' of a sequence of billiard trajectories. In this section, we define the \textit{generalised trajectories} which may not be physically realisable but may be thought of as the limit of physical trajectories.
	
	Let $f$ denote the billiard map on the polygon $Q$. We have defined the partition 
	\[\mathring{V} = \bigcup_{a, b \in \mathcal{A}} \bigcup_{i\in I_{a,b}} V_{a,b}^i\] 
	in Section \ref{sec:partition and encoding}. 
	For $a , b \in \mathcal{A}$ and $i \in I_{a,b}$, let $\ol{V^i_{a,b}}$ be the closure of $V^i_{a,b}$ in $TQ$. 
	We call each element of the disjoint union \[\bigsqcup_{\substack{a, b \in \mathcal{A}\\ i\in I_{a,b}}} \ol{V_{a,b}^i}\] a \textit{generalised phase point}.

	\begin{definition}\label{def:generalised trajectories}
		A sequence of generalised phase points $\{z_n\}_{n \geq 0}$ is said to be a \textit{generalised trajectory} if for all $n \geq 0$ and $a, b \in \mathcal{A}$ such that $z_n \in \ol{V_{a,b}^i}$, there exists a sequence of phase points $\{p_m\}_{m \geq 0} \subset V_{a,b}^i$ satisfying
		\[p_m \rightarrow z_n \quad \text{ and } \quad f(p_m) \rightarrow z_{n+1} \quad \text{ as } \quad m \rightarrow \infty.\]
		We call $\{z_n\}_{n \geq 0}$ a \textit{physical trajectory} if it is the orbit under $f$ of some point in $\mathring{V}$.
	\end{definition}

	We can extend the definitions of edge codings and $V$-codings to generalised trajectories in an obvious way. Using the `unfolding' method in Section \ref{sec:basic definitions}, we can similarly construct an infinite corridor $Q^\infty$ according to the edge coding of a generalised trajectory $\{z_n\}_{n \geq 0}$ and obtain an unfolded `trajectory' $\Gamma$ in $Q^\infty$. The following properties of $\Gamma$ can be easily deduced from Definition \ref{def:generalised trajectories}. 

	\begin{remark}\label{rem:properties of generalised trajectories}
		Let $\{z_n\}_{n \geq 0}$ and $\Gamma$ be defined as above. Then 
		\begin{enumerate}
			\item For all $n \geq 0$, the interior of the segment in $Q$ joining the base points of $z_n$ and $z_{n+1}$ does not transversally intersect the interior of any edge of $Q$.
			\item $\Gamma$ is a straight line and $\Gamma$ is contained in $Q^\infty$.
			\item If $\Gamma$ does not contains any vertices, then $\Gamma$ is a physical trajectory.
		\end{enumerate}
	\end{remark}

	For convenience, we will also call the straight line $\Gamma$ in $Q^\infty$ associated with $\{z_n\}_{n \geq 0}$ a \textit{generalised trajectory}. Despite the nice properties of $\Gamma$ in Remark \ref{rem:properties of generalised trajectories}, the generalised trajectory $\Gamma$ may contain vertices in $Q^\infty$ or even overlap one or more edges entirely as illustrated in Figure \ref{fig:generalised trajectories overlapping vertices}. Two particular classes of vertices lying on $\Gamma$ are distinguished in the following definition.

	\begin{definition}\label{def:blocking vertices}
		Let $\{z_n\}_{n \geq 0}$ be a generalised trajectory in $Q$ whose unfolded trajectory in $Q^\infty$ is $\Gamma$. Let $v$ be a vertex in $Q^\infty$ lying on $\Gamma$. We say $v$ \textit{blocks $\gamma$ from the left (resp. from the right)} if $v$ is an end vertex of an edge $e$ such that one of the followings holds
		\begin{enumerate}
			\item The edge $e$ is a reflecting edge, and the interior of $e$ lies on the right (resp. left) side of $\Gamma$ relative to the forward direction of $\Gamma$.
			\item The edge $e$ is not a reflecting edge and the interior of $e$ lies on the left (resp. right) side of $\Gamma$ relative to the forward direction of $\Gamma$.
		\end{enumerate}
	\end{definition}

	For example, the vertices $v_1$ $v_2$ and $v_3$ in Figure \ref{fig:generalised trajectories overlapping vertices} block $\Gamma$ from the right whereas  $v_4$ blocks $\Gamma$ from the left.
	The vertices that block $\Gamma$ in the sense of Definition \ref{def:blocking vertices} ensure that trajectories near $\Gamma$ do not have the same $V$-coding as $\Gamma$.
	More precisely, we have the following lemma.

	\begin{figure}[]
		\centering
		\begin{tikzpicture}[line cap=round,line join=round,>=triangle 45,x=2.0cm,y=1.0cm]
			\clip(-0.6454946728781936,1.658736164980056) rectangle (5.421672843560385,5.995641480626333);
			\fill[line width=2.pt,color=zzttqq,fill=zzttqq,fill opacity=0.10000000149011612] (-0.06064225813758718,3.9182177174564456) -- (1.78,5.2) -- (3.7,3.98) -- (3.96,2.06) -- (0.5383767338770005,2.455229025420818) -- cycle;
			\fill[line width=2.pt,color=ffffff,fill=ffffff,fill opacity=1.0] (2.3155318727118956,3.2850746504288963) -- (0.88,3.82) -- (0.78,2.98) -- cycle;
			\fill[line width=2.pt,color=ffffff,fill=ffffff,fill opacity=1.0] (1.963759348485022,4.54292345131072) -- (2.1431537600660575,3.901454803388197) -- (2.88,3.54) -- (2.94,3.98) -- cycle;
			\draw [line width=0.8pt,color=ffffff] (2.3155318727118956,3.2850746504288963)-- (0.88,3.82);
			\draw [line width=0.8pt,color=ffffff] (0.88,3.82)-- (0.78,2.98);
			\draw [line width=0.8pt,color=ffffff] (0.78,2.98)-- (2.3155318727118956,3.2850746504288963);
			\draw [line width=0.8pt,color=ffffff] (1.963759348485022,4.54292345131072)-- (2.1431537600660575,3.901454803388197);
			\draw [line width=0.8pt,color=ffffff] (2.1431537600660575,3.901454803388197)-- (2.88,3.54);
			\draw [line width=0.8pt,color=ffffff] (2.88,3.54)-- (2.94,3.98);
			\draw [line width=0.8pt,color=ffffff] (2.94,3.98)-- (1.963759348485022,4.54292345131072);
			\draw [line width=0.8pt] (0.88,3.82)-- (0.78,2.98);
			\draw [line width=0.8pt] (0.78,2.98)-- (2.3155318727118956,3.2850746504288963);
			\draw [line width=0.8pt] (2.3155318727118956,3.2850746504288963)-- (0.88,3.82);
			\draw [line width=0.8pt] (1.963759348485022,4.54292345131072)-- (2.1431537600660575,3.901454803388197);
			\draw [line width=0.8pt] (2.1431537600660575,3.901454803388197)-- (2.88,3.54);
			\draw [line width=0.8pt] (2.94,3.98)-- (2.88,3.54);
			\draw [line width=0.8pt] (2.94,3.98)-- (1.963759348485022,4.54292345131072);
			\draw [line width=0.8pt] (1.78,5.2)-- (-0.06064225813758718,3.9182177174564456);
			\draw [line width=0.8pt] (-0.06064225813758718,3.9182177174564456)-- (0.5383767338770005,2.455229025420818);
			\draw [line width=0.8pt] (0.5383767338770005,2.455229025420818)-- (3.96,2.06);
			\draw [line width=0.8pt] (3.96,2.06)-- (3.7,3.98);
			\draw [line width=0.8pt] (3.7,3.98)-- (1.78,5.2);
			\draw [line width=0.8pt,dotted] (1.4522106732596802,4.971734811821746)-- (3.043348269995034,2.1658817238844277);
			\draw [line width=0.8pt,dotted] (1.6086333625232443,5.080664089523283)-- (2.805161949135644,2.193394439382325);
			\draw [line width=0.8pt,dotted] (0.9086338948254077,4.5932000472748635)-- (3.7440084533793025,2.0849490144970755);
			\draw [line width=0.8pt,dash pattern=on 2pt off 2pt] (2.7476850515893867,1.7398051355825919)-- (1.6003580852964723,5.842353655944471);
			\draw (1.194879473774842,5.823738400993907) node[anchor=north west] {$z_{n+1}$};
			\draw (2.307193518455248,2.2441095662951427) node[anchor=north west] {$z_n$};
			\draw (3.7835376141219688,2.092430378384178) node[anchor=north west] {$p_0$};
			\draw (3.37905977969273,2.1328781618271018) node[anchor=north west] {$p_1$};
			\draw (2.843126649073989,2.183437891130757) node[anchor=north west] {$\cdots$};
			\draw [line width=0.8pt,dotted] (3.409307151521811,2.1236100999093526)-- (1.2069548197822173,4.800944096631064);
			\draw (1.8015962254187,5.490044187589785) node[anchor=north west] {$v_1$};
			\draw (1.9532754133296644,4.883327435945927) node[anchor=north west] {$v_2$};
			\draw (2.145402384683553,4.195715117416221) node[anchor=north west] {$v_3$};
			\draw (2.024059034354781,3.265416098228971) node[anchor=north west] {$v_4$};
			\draw [->,line width=0.8pt, -{Latex[width=1.5mm]}] (2.614680117619929,2.215396846653664) -- (2.525737716274771,2.5334321824873482);
			\draw [->,line width=0.8pt, -{Latex[width=1.5mm]}] (1.78,5.2) -- (1.6832642984121484,5.5459021892405245);
			\draw [->,line width=0.8pt, -{Latex[width=1.5mm]}] (3.7440084533793025,2.0849490144970755) -- (3.4762169173767266,2.321844840031388);
			\draw [->,line width=0.8pt, -{Latex[width=1.5mm]}] (3.409307151521811,2.1236100999093526) -- (3.1778658642462254,2.40496636508901);
			\draw [->,line width=0.8pt, -{Latex[width=1.5mm]}] (0.9086338948254077,4.5932000472748635) -- (0.6219635737335659,4.8467965700962);
			\draw [->,line width=0.8pt, -{Latex[width=1.5mm]}] (1.2069548197822173,4.800944096631064) -- (0.9866644285688551,5.06874455028545);
			\draw (0.021893753930049997,4.984446894553236) node[anchor=north west] {$f(p_0)$};
			\draw (0.4870432635236744,5.469820295868323) node[anchor=north west] {$f(p_1)$};
			\begin{scriptsize}
			\draw [fill=ududff] (-0.06064225813758718,3.9182177174564456) circle (1.0pt);
			\draw [fill=ududff] (1.78,5.2) circle (1.5pt);
			\draw [fill=ududff] (3.7,3.98) circle (1.0pt);
			\draw [fill=ududff] (3.96,2.06) circle (1.0pt);
			\draw [fill=ududff] (0.5383767338770005,2.455229025420818) circle (1.0pt);
			\draw [fill=xdxdff] (2.614680117619929,2.215396846653664) circle (1.0pt);
			\draw [fill=xdxdff] (2.3155318727118956,3.2850746504288963) circle (1.5pt);
			\draw [fill=ududff] (0.88,3.82) circle (1.0pt);
			\draw [fill=ududff] (0.78,2.98) circle (1.0pt);
			\draw [fill=xdxdff] (1.963759348485022,4.54292345131072) circle (1.5pt);
			\draw [fill=xdxdff] (2.1431537600660575,3.901454803388197) circle (1.5pt);
			\draw [fill=ududff] (2.88,3.54) circle (1.0pt);
			\draw [fill=ududff] (2.94,3.98) circle (1.0pt);
			\draw [fill=xdxdff] (1.4522106732596802,4.971734811821746) circle (1.0pt);
			\draw [fill=xdxdff] (3.043348269995034,2.1658817238844277) circle (1.0pt);
			\draw [fill=xdxdff] (1.6086333625232443,5.080664089523283) circle (1.0pt);
			\draw [fill=xdxdff] (2.805161949135644,2.193394439382325) circle (1.0pt);
			\draw [fill=xdxdff] (0.9086338948254077,4.5932000472748635) circle (1.0pt);
			\draw [fill=xdxdff] (3.7440084533793025,2.0849490144970755) circle (1.0pt);
			\draw [fill=xdxdff] (3.409307151521811,2.1236100999093526) circle (1.0pt);
			\draw [fill=xdxdff] (1.2069548197822173,4.800944096631064) circle (1.0pt);
			\end{scriptsize}
			\end{tikzpicture}
		\caption{
			The generalised trajectory $\Gamma$ contains $v_1$, $v_2$, $v_3$ and $v_4$. It overlaps the edge $v_2v_3$.
			}
		\label{fig:generalised trajectories overlapping vertices}
	\end{figure}

	\begin{lemma}\label{lem:blocking vertices make V coding different}
		Suppose a vertex $v \in Q_n$ in the unfolding $Q^\infty$ blocks $\Gamma$ from the left (resp. right). Then if $\Gamma'$ is another generalised trajectory whose $V$-coding coincides with the $V$-coding of $\Gamma$ up to at least the $(n + 1)$-th term, then $v$ does not lie on the right side (resp. left side) of $\Gamma'$. 
		
		If $\Gamma$ does not contain any vertices blocking it from the left (resp. right) or overlap any reflecting edge, and there are no vertices in the open left (resp. right) $\epsilon$-strip of $\Gamma$, then any billiard trajectory in the open left (resp. right) $\epsilon$-strip of $\Gamma$ will have the same edge coding as $\Gamma$.

	\end{lemma}
	\begin{proof}
		Let us assume that $v$ blocks $\Gamma$ from the left, the case where $v$ blocks $\Gamma$ from the right being similar. Let $\Gamma'$ be another generalised trajectory satisfying the condition in the first statement of the Lemma. 
		Suppose on the contrary that $v$ lies on the right side of $\Gamma'$. If $v$ is the end vertex of a non-reflecting edge $e$ in $Q_n$, then Definition \ref{def:blocking vertices} implies that the interior of $e$ lies on the left of $\Gamma$.
		By Remark \ref{rem:V partition detects holes}, no holes of $Q_n$ lie between $\Gamma$ and $\Gamma'$. Thus, the edge $e$ must belong to the outer boundary of $Q$ (i.e. the unique connected component of $\partial Q$ not bounding a hole of $Q$) as illustrated in Figure \ref{fig:e is not reflecting}. However, this would force $\Gamma'$ to intersect some non-reflecting edge on the outer boundary, contradicting the condition on the $V$-coding of $\Gamma'$.

		On the other hand, if $v$ is an end vertex of a reflecting edge $e$, then $\Gamma$ and $\Gamma'$ exit a copy of $Q$ both through $e$. By Definition \ref{def:blocking vertices}, the other end vertex $u$ of $e$ must be on the right side of $\Gamma$.
		The vertex $u$ lies either on the trajectory $\Gamma'$ or on the left side of $\Gamma'$. 
		Geometrically, this means $\Gamma'$ and $\Gamma$ hit $e$ from different sides as illustrated in Figure \ref{fig:e is a reflecting edge}. This contradicts the requirement that $\Gamma$ and $\Gamma'$ must exit a copy of $Q$ both through $e$. 
		In conclusion, we deduce that $v$ does not lie on the right side of $\Gamma'$.

		\begin{figure}
			\begin{subfigure}[t]{0.49\textwidth}
				\begin{tikzpicture}[line cap=round,line join=round,>=triangle 45,x=1.0cm,y=1.0cm]
					\clip(0.72,0.32) rectangle (5.98,4.34);
					\fill[line width=2.pt,color=zzttqq,fill=zzttqq,fill opacity=0.10000000149011612] (3.8077697309298517,2.3625437437225774) -- (2.6,2.78) -- (1.62,2.2) -- (1.26,1.26) -- (3.92,0.66) -- (5.7,3.04) -- (4.02,3.88) -- cycle;
					\draw [line width=0.8pt,dotted] (2.72,0.62)-- (4.78,3.92);
					\draw [line width=0.8pt,color=zzttqq] (3.8077697309298517,2.3625437437225774)-- (2.6,2.78);
					\draw [line width=0.8pt,color=zzttqq] (2.6,2.78)-- (1.62,2.2);
					\draw [line width=0.8pt,color=zzttqq] (1.62,2.2)-- (1.26,1.26);
					\draw [line width=0.8pt,color=zzttqq] (1.26,1.26)-- (3.92,0.66);
					\draw [line width=0.8pt,color=zzttqq] (3.92,0.66)-- (5.7,3.04);
					\draw [line width=0.8pt,color=zzttqq] (5.7,3.04)-- (4.02,3.88);
					\draw [line width=0.8pt,color=zzttqq] (4.02,3.88)-- (3.8077697309298517,2.3625437437225774);
					\draw [line width=0.8pt,dotted] (1.64,0.84)-- (4.48,4.06);
					\draw [line width=1.6pt] (2.6,2.78)-- (3.8077697309298517,2.3625437437225774);
					\begin{scriptsize}
					\draw[color=black] (3.6,1.47) node {$\Gamma$};
					\draw [fill=xdxdff] (3.8077697309298517,2.3625437437225774) circle (1.0pt);
					\draw[color=xdxdff] (3.96,2.05) node {$v$};
					\draw[color=black] (3.5,3.17) node {$\Gamma'$};
					\end{scriptsize}
					\end{tikzpicture}
				\caption{}
				\label{fig:e is not reflecting}
			\end{subfigure}
			\begin{subfigure}[t]{0.49\textwidth}
				\begin{tikzpicture}[line cap=round,line join=round,>=triangle 45,x=1.0cm,y=1.0cm]
					\clip(-1.84,1.18) rectangle (3.14,5.66);
					\fill[line width=2.pt,color=zzttqq,fill=zzttqq,fill opacity=0.10000000149011612] (0.6432993185513162,2.4602947179079178) -- (1.14,4.28) -- (2.84,5.46) -- (1.36,1.5) -- (-1.58,2.24) -- (-0.4,3.38) -- cycle;
					\draw [line width=0.8pt,dotted] (0.62,1.14)-- (0.68,4.54);
					\draw [line width=0.8pt,color=zzttqq] (0.6432993185513162,2.4602947179079178)-- (1.14,4.28);
					\draw [line width=0.8pt,color=zzttqq] (1.14,4.28)-- (2.84,5.46);
					\draw [line width=0.8pt,color=zzttqq] (2.84,5.46)-- (1.36,1.5);
					\draw [line width=0.8pt,color=zzttqq] (1.36,1.5)-- (-1.58,2.24);
					\draw [line width=0.8pt,color=zzttqq] (-1.58,2.24)-- (-0.4,3.38);
					\draw [line width=0.8pt,color=zzttqq] (-0.4,3.38)-- (0.6432993185513162,2.4602947179079178);
					\draw [line width=0.8pt,dotted] (-1.22,1.52)-- (2.04,4.38);
					\draw [line width=1.6pt] (0.6432993185513162,2.4602947179079178)-- (1.14,4.28);
					\begin{scriptsize}
					\draw[color=black] (0.4,4.11) node {$\Gamma$};
					\draw [fill=xdxdff] (0.6432993185513162,2.4602947179079178) circle (1.0pt);
					\draw[color=xdxdff] (0.82,2.29) node {$v$};
					\draw [fill=ududff] (1.14,4.28) circle (1.0pt);
					\draw[color=ududff] (1.04,4.47) node {$u$};
					\draw[color=black] (1.72,3.65) node {$\Gamma'$};
					\end{scriptsize}
					\end{tikzpicture}
				\caption{}
				\label{fig:e is a reflecting edge}
			\end{subfigure}
			
			\caption{The vertex $v$ lies on $\Gamma$. The edge $e$ is indicated by the thicker edge.}\label{fig:blocking vertices make V coding different}
			\end{figure}
		
		For the second statement, let $l$ be the billiard flow in the open left $\epsilon$-strip of $\Gamma$. 
		We show that $e$ is a reflecting edge in $Q^\infty$ if and only if $l$ intersects the interior of $e$, thereby proving that $l$ and $\Gamma$ have the same edge coding.
		If $e$ is a reflecting edge in $Q^\infty$, then its end vertex $w$ on the left side of $\Gamma$ is not on $\Gamma$ for otherwise it would block $\Gamma$ from the left or $e$ would be contained in $\Gamma$, contradicting the assumption. Also, the vertex $w$ is not in the left $\epsilon$-strip of $\Gamma$ by assumption. Thus $l$ also intersects $e$.

		Conversely, if $l$ intersects the interior of an edge $e$, then the assumption implies that a vertex $v$ of $e$ lies either on $\Gamma$ or on the right side of $\Gamma$.
		In the latter case, $\Gamma$ clearly intersect the interior of $e$ and thus $e$ is a reflecting edge. If $v$ is on $\Gamma$, then $v$ must be a reflecting edge, for, if not, the edge $e$ would block $\Gamma$ from the left by Definition \ref{def:blocking vertices}, contradicting the assumption.
		We conclude that an edge $e$ in $Q^\infty$ is a reflecting edge if and only if $l$ intersects the interior of $e$, as desired. The proof of the analoguous statement with `left' replaced by `right' is similar.
	\end{proof}

	\section{Coding Parallel Trajectories}\label{sec:encoding parallel trajectories}

	In this section, we introduce the main new tools in our proof: the coding of two parallel trajectories which detects the positions of holes (Definition \ref{def:B coding}) and the construction of parallel generalised trajectories from a convergent sequence of codings (Lemma \ref{lemma:approximate_by_geodesic}).

	\paragraph{Definition of $\mathcal{B}$-codings.}

	Before discussing parallel trajectories, let us consider a slightly more general situation. Suppose $0 \leq N \leq \infty$ and $p_1 = (x_1, \theta_1)$ and $p_2 = (x_2, \theta_2)$ are two phase points in $\mathring{V}$ such that the first $N + 1$ terms in their edge codings coincide. Then we can define a coding $(\beta_n)_{0 \leq n \leq N - 1}$ associated to the pair $(p_1, p_2)$ as follows. 
	Let $(\xi_n)_{0 \leq n \leq N-1}$ and  $(\xi'_n)_{0 \leq n \leq N-1}$ be the first $N$ terms of the $V$-codings associated with $p_1$ and $p_2$ respectively. If $f^n(p_1)$ is on the left side of $f^n(p_2)$ relative to the orientation on $\partial Q$, then we put $\beta_n = (\xi_n, \xi'_n)$. Otherwise, let $\beta_n = (\xi'_n, \xi_n)$.
	Thus defined, the sequence $\beta = (\beta_n)_{0 \leq n \leq N-1}$ is a sequence of elements in the finite set
	\[\mathcal{B} = \bigsqcup_{a, b \in \mathcal{A}} I_{a,b} \times I_{a,b}.\]

	\begin{definition}\label{def:B coding}
		The sequence $\beta$ constructed above is called the $\mathcal{B}$-\textit{coding} associated with the pair $(p_1, p_2)$.
	\end{definition}

	Clearly, we can recover the edge coding and the $V$-codings of $p_1$ and $p_2$ from their $\mathcal{B}$-coding $\beta$. Moreover, the coding $\beta$ also contains information about the holes between the two billiard trajectories from $p_1$ and $p_2$. The following lemma is a direct consequence of Remark \ref{rem:V partition detects holes} and the definition of $\beta$.
	\begin{lemma}\label{lem:B coding detects holes}
		Let $(\beta_n)_{0 \leq n \leq N-1}$ be the $\mathcal{B}$-coding associated with a pair of phase points $(p_1, p_2)$ as above. Suppose $\beta_n$ is given by $(i,j) \in I_{a,b}^2$ for some $a,b \in \mathcal{A}$. Then $i \neq j$ if and only if there is a hole in $Q$ separating the trajectory from $f^n(p_1)$ to $f^{n+1}p_1)$ and the trajectory from $f^n(p_2)$ to $f^{n+1}(p_2)$.
	\end{lemma}

	\paragraph{Parallel trajectories and alternating orbits.}

	Next, we shall apply $\mathcal{B}$-codings to parallel trajectories - a special situation where the $\mathcal{B}$-coding can be interpreted as the coding of a single dynamical system (Corollary \ref{cor:single dynamical system on the alternating orbit}). The notion of `parallel trajectories' is made precise in the following definition.
	
	\begin{definition}\label{def:parallel phase points}
		Two phase points $(x_1, \theta_1)$ and $(x_2, \theta_2)$ are said to be \textit{parallel} if the following properties are satisfied.
		\begin{enumerate}
			\item The base points $x_1$ and $x_2$ are two disinct points on the same edge of $Q$ and $\theta_1 = \theta_2$.
			\item The parallel phase points $(x_1, \theta_1)$ and $(x_2, \theta_2)$ both have infinite forward orbit under $f$ and the same edge coding.
		\end{enumerate}
		In this case, the two unfolded trajectories $\Gamma_1$ and $\Gamma_2$ in $Q^\infty$ associated with $p_1$ and $p_2$ will be called \textit{parallel trajectories} from $p_1$ and $p_2$. The \textit{parallel separation} between $p_1$ and $p_2$ is defined as $|x_1 - x_2|\sin\theta_1 $. Geometrically, this is nothing but the perpendicular distance between $\Gamma_1$ and $\Gamma_2$.
	\end{definition}

	From now on until the end of this section, let $p_1$ and $p_2$ be parallel phase points with a parallel separation $L > 0$. Assume that $p_1$ is on the left side of $p_2$. Let $\beta = (\beta_n)_{n \geq 0}$ be the $\mathcal{B}$-coding associated with $(p_1, p_2)$. 
	
	The \textit{alternating orbit} $(\mathcal{P}_n)_{n \geq 0}$ of the parallel phase points $p_1$ and $p_2$ is defined as the sequence 
	\[p_1, f(p_2), f^{2}(p_1), f^{3}(p_2), f^{4}(p_1), \ldots.\]
	In other words, the sequence $(\mathcal{P}_n)_{n \geq 0}$ is given by
	\[\mathcal{P}_n= \begin{cases}
		f^n(p_1) & \quad \text{ if $n$ is even} \\
		f^n(p_2) & \quad \text{ if $n$ is odd} \\
	\end{cases} \qquad \text{for all $n \geq 0$}.\]
	We would like to interpret the alternating orbit associated with $(p_1, p_2)$ as the orbit of a single point in a new dynamical system. We will also interpret the $\mathcal{B}$-coding associated with parallel phase points as a natural coding of this dynamical system, which will be useful for the proof of lemma \ref{lemma:approximate_by_geodesic}. The main idea is to compose $f$ with a map $\tau$, whose job is to translate each phase point to the left by a distance calibrated by $L$, so that the alternating orbit will be the orbit of $p_1$ under the iterations of $\tau \circ f$.

	Since $p_1$ and $p_2$ are parallel phase points, by the law of reflection their images under the billiard map $f$ are still parallel with the same parallel separation $L$. Suppose for the moment that $f^n(p_2)$ is the phase point on the right. If $(x_n, \theta_n)$ is the coordinates of $f^n(p_2)$ as defined in Section \ref{sec:partition and encoding}, then the coordinates of $f^n(p_1)$ will be given by
	\[f^n(p_1) = \Big(x_n - \frac{L}{\sin(\theta_n)}, \theta_n\Big).\]
	This motivates us to define a parallel translation map $\tau$ associated with $(p_1, p_2)$ as follows.
	\begin{equation}\label{def:tau}
	\begin{split}
		\tau: \qquad F &\rightarrow V\\ 
		(x, \theta) &\mapsto \Big(x - \frac{L}{\sin \theta}, \theta\Big)
	\end{split}
	\end{equation}
	where 
	\begin{equation}\label{def:F}
		F = \bigcup_{a \in \mathcal{A}} \{(x, \theta) \in {E_a}\mid (x - L / \sin\theta , \theta) \in {E_a}\}
	\end{equation}
	is a finite union of some disjoint open subsets of $\mathring{V}$. The set $F$ is exactly the subset of $\mathring{V}$ on which $\tau(p)$ and $p$ always lie on the same edge. 
	Notice that the map $\tau$ is also a homeomorphism onto its image.

	This map $\tau$ helps define another family of open subsets $U^{i,j}_{a,b}$ in $V$ which partitions $\tau(F)$: within each $V_{a,b}^i$, we define subsets $U^{i,j}_{a,b}$ by
	\begin{equation}\label{def:Uabij}
		U^{i,j}_{a,b} = V_{a,b}^i \cap \tau(V_{a,b}^j \cap F).
	\end{equation}

	\begin{lemma}\label{lem:commutation lemma}
		For all $p \in U^{i,j}_{a,b}$, we have
		\begin{equation}\label{eq:commutation}
			\tau\circ f(p) = f \circ \tau^{-1}(p).
		\end{equation}
	\end{lemma}
	\begin{proof}
		Let $p \in U^{i,j}_{a,b}$ for some $i, j \in I_{a,b}$ and $a,b \in \mathcal{A}$. Note that $\tau^{-1}(p)$ and $p$ are parallel phase points with parallel separation $L$. Since the billiard map $f$ preserves the parallel separation between two parallel trajectories, $f(p)$ and $f(\tau^{-1}(p))$ have the same parallel separation $L$. On the other hand, each reflection inverts the orientation in the sense that what initially lies on the right side of $p$ on $e_a$ will be mapped by $f$ to the left side of $f(p)$ on $e_b$. Thus, the base point of $f(\tau^{-1}(p))$ is on the left side of the base point of $f(p)$ and we may obtain $f(\tau^{-1}(p))$ by translating $f(p)$ to the left using the map $\tau$. Therefore we have \eqref{eq:commutation} as desired.
		The geometrical meaning of this relation is illustrated in Figure~\ref{fig:commute}	
	\end{proof}
	
	\begin{figure}[]
		\centering
		\begin{tikzpicture}[line cap=round,line join=round,>=triangle 45,x=1.0cm,y=1.0cm]
			\clip(-2.434481563290093,-0.14379676887468248) rectangle (6.4493397064923155,4.564504601442356);
			\fill[line width=2.pt,color=zzttqq,fill=zzttqq,fill opacity=0.10000000149011612] (0.1,2.14) -- (3.44,2.28) -- (4.56,0.92) -- (3.94,0.3) -- (-1.04,0.34) -- (-1.24,1.26) -- cycle;
			\fill[line width=2.pt,color=ffffff,fill=ffffff,fill opacity=1.0] (1.24,1.56) -- (1.1,1.04) -- (2.3,1.04) -- cycle;
			\fill[line width=2.pt,color=zzttqq,fill=zzttqq,fill opacity=0.10000000149011612] (0.1,2.14) -- (3.44,2.28) -- (4.442259288424368,3.7289569761615002) -- (3.772549216121412,4.294897272532035) -- (-1.1866346911017251,3.8382847734268744) -- (-1.3089426587443624,2.904774858615507) -- cycle;
			\fill[line width=2.pt,color=ffffff,fill=ffffff,fill opacity=1.0] (1.1874636695540126,2.8133667406399887) -- (1.0044383993127641,3.3198267592526314) -- (2.2002290786742074,3.4202491230582) -- cycle;
			\draw [line width=0.8pt,color=zzttqq] (0.1,2.14)-- (3.44,2.28);
			\draw [line width=0.8pt,color=zzttqq] (3.44,2.28)-- (4.56,0.92);
			\draw [line width=0.8pt,color=zzttqq] (4.56,0.92)-- (3.94,0.3);
			\draw [line width=0.8pt,color=zzttqq] (3.94,0.3)-- (-1.04,0.34);
			\draw [line width=0.8pt,color=zzttqq] (-1.04,0.34)-- (-1.24,1.26);
			\draw [line width=0.8pt,color=zzttqq] (-1.24,1.26)-- (0.1,2.14);
			\draw [line width=0.8pt] (1.1,1.04)-- (2.3,1.04);
			\draw [line width=0.8pt] (2.3,1.04)-- (1.24,1.56);
			\draw [line width=0.8pt] (1.24,1.56)-- (1.1,1.04);
			\draw [line width=0.8pt,color=zzttqq] (0.1,2.14)-- (3.44,2.28);
			\draw [line width=0.8pt,color=zzttqq] (3.44,2.28)-- (4.442259288424368,3.7289569761615002);
			\draw [line width=0.8pt,color=zzttqq] (4.442259288424368,3.7289569761615002)-- (3.772549216121412,4.294897272532035);
			\draw [line width=0.8pt,color=zzttqq] (3.772549216121412,4.294897272532035)-- (-1.1866346911017251,3.8382847734268744);
			\draw [line width=0.8pt,color=zzttqq] (-1.1866346911017251,3.8382847734268744)-- (-1.3089426587443624,2.904774858615507);
			\draw [line width=0.8pt,color=zzttqq] (-1.3089426587443624,2.904774858615507)-- (0.1,2.14);
			\draw [line width=0.8pt] (1.0044383993127641,3.3198267592526314)-- (2.2002290786742074,3.4202491230582);
			\draw [line width=0.8pt] (2.2002290786742074,3.4202491230582)-- (1.1874636695540126,2.8133667406399887);
			\draw [line width=0.8pt] (1.1874636695540126,2.8133667406399887)-- (1.0044383993127641,3.3198267592526314);
			\draw [->,line width=0.8pt, -{Latex[width=1.5mm]}] (0.5930569547869818,2.160667057985083) -- (0.5435085564368582,2.837219128375695);
			\draw [->,line width=0.8pt, -{Latex[width=1.5mm]}] (2.888564244656307,2.2568859264227195) -- (2.84,2.92);
			\draw [->,line width=0.8pt, -{Latex[width=1.5mm]}] (0.7274361745020563,0.3258037255060076) -- (0.676647106879365,1.019296350848455);
			\draw [->,line width=0.8pt, -{Latex[width=1.5mm]}] (3.031345476321013,0.30729842990906814) -- (2.973242408725599,1.1006591032440256);
			\draw [line width=0.8pt,dotted] (0.7274361745020563,0.3258037255060076)-- (0.35562800961506674,5.402609275605069);
			\draw [line width=0.8pt,dotted] (3.031345476321013,0.30729842990906814)-- (2.652554146053284,5.479454690697797);
			\draw (0.15508419038427698,0.9713272398846161) node[anchor=north west] {{\scriptsize $p$}};
			\draw (1.753428602939271,0.9837175066486084) node[anchor=north west] {{\scriptsize $\tau^{-1}(p)$}};
			\draw (-0.26618487959145787,3.01572125594333) node[anchor=north west] {{\scriptsize $f(p)$}};
			\draw (1.4932330008954346,2.8670380547754237) node[anchor=north west] {{\scriptsize $\tau \circ f(p)$}};
			\draw (3.103967680214421,2.2475247165758137) node[anchor=north west] {{$e_b$}};
			\draw (3.4261146160782183,0.3270333681570214) node[anchor=north west] {{$e_a$}};
			\begin{scriptsize}
			\draw [fill=ududff] (0.1,2.14) circle (1.0pt);
			\draw [fill=ududff] (3.44,2.28) circle (1.0pt);
			\draw [fill=ududff] (4.56,0.92) circle (1.0pt);
			\draw [fill=ududff] (3.94,0.3) circle (1.0pt);
			\draw [fill=ududff] (-1.04,0.34) circle (1.0pt);
			\draw [fill=ududff] (-1.24,1.26) circle (1.0pt);
			\draw [fill=ududff] (1.24,1.56) circle (1.0pt);
			\draw [fill=ududff] (1.1,1.04) circle (1.0pt);
			\draw [fill=ududff] (2.3,1.04) circle (1.0pt);
			\draw [fill=ududff] (0.1,2.14) circle (1.0pt);
			\draw [fill=ududff] (3.44,2.28) circle (1.0pt);
			\draw [fill=ududff] (4.442259288424368,3.7289569761615002) circle (1.0pt);
			\draw [fill=ududff] (3.772549216121412,4.294897272532035) circle (1.0pt);
			\draw [fill=ududff] (-1.1866346911017251,3.8382847734268744) circle (1.0pt);
			\draw [fill=ududff] (-1.3089426587443624,2.904774858615507) circle (1.0pt);
			\draw [fill=ududff] (1.1874636695540126,2.8133667406399887) circle (1.0pt);
			\draw [fill=ududff] (1.0044383993127641,3.3198267592526314) circle (1.0pt);
			\draw [fill=ududff] (2.2002290786742074,3.4202491230582) circle (1.0pt);
			\draw [fill=ududff] (1.0044383993127641,3.3198267592526314) circle (1.0pt);
			\draw [fill=ududff] (2.2002290786742074,3.4202491230582) circle (1.0pt);
			\draw [fill=ududff] (2.2002290786742074,3.4202491230582) circle (1.0pt);
			\draw [fill=ududff] (1.1874636695540126,2.8133667406399887) circle (1.0pt);
			\draw [fill=ududff] (1.1874636695540126,2.8133667406399887) circle (1.0pt);
			\draw [fill=ududff] (1.0044383993127641,3.3198267592526314) circle (1.0pt);
			\draw [fill=uuuuuu] (0.7274361745020563,0.3258037255060076) circle (1.0pt);
			\draw [fill=uuuuuu] (3.031345476321013,0.30729842990906814) circle (1.0pt);
			\draw [fill=uuuuuu] (2.888564244656307,2.2568859264227195) circle (1.0pt);
			\draw [fill=uuuuuu] (0.5930569547869818,2.160667057985083) circle (1.0pt);
			\end{scriptsize}
			\end{tikzpicture}
		\caption{}
		\label{fig:commute}
	\end{figure}

	Using Lemma \ref{lem:commutation lemma}, we can interpret the parallel trajectories from $p_1$ and $p_2$ as an orbit in a single dynamical system with a natural coding given by the $\CB$-coding of $p_1$ and $p_2$.
	\begin{corollary}\label{cor:single dynamical system on the alternating orbit}
		The alternating orbit $(\mathcal{P}_n)_{n \geq 0}$ is equal to the orbit of $p_1$ under $\tau \circ f$. 
		Moreover, if $\beta$ is the $\CB$-coding of $(p_1, p_2)$, then for all $n \geq 0$ and $(i,j) \in \bigsqcup_{a,b \in \mathcal{A}} I_{a,b} \times I_{a,b}$, we have 
		\[(\tau \circ f)^n(p_1) \in U^{i,j}_{a,b} \quad \text{ if and only if } \quad \beta_n = (i,j) \in I_{a,b}.\]
	\end{corollary}
	\begin{proof}
		Let $\alpha = (\alpha_n)_{n \geq 0}$ be the edge coding of $p_1$ and $p_2$. 
		The phase point $p_1$ is initially on the left hand side of $p_2$. Since each reflection inverts the orientation, the definition of the alternating orbit ensures that $\CP_n$ is always on the left side of the parallel pair $f^n(p_1)$ and $f^n(p_2)$. Thus, by definition of $\CB$-coding, we have $\CP_n \in V_{\alpha_{n},\alpha_{n+1}}^i$ and $\tau\inv(\CP_n) \in V_{\alpha_{n},\alpha_{n+1}}^j$ where $\beta_n = (i,j) \in I_{\alpha_{n}, \alpha_{n+1}}$. By Lemma \ref{lem:commutation lemma}, we also have $\CP_{n+1} = \tau \circ f (\CP_n)$ for $n \geq 0$ and $\CP_0 = p_1$. This shows that $(\mathcal{P}_n)_{n \geq 0}$ is equal to the orbit of $p_1$ under $\tau \circ f$.
	\end{proof}

	\paragraph{Construction of generalised trajectories.}

	For the rest of this section, we apply the notion of generalised trajectories defined in Section \ref{sec:generalised trajectories} to study parallel trajectories. Let $\beta$ be the $\mathcal{B}$-coding of parallel phase points $p_1$ and $p_2$. Let $S: \mathcal{B}^{\mathbb{N}} \rightarrow \mathcal{B}^{\mathbb{N}}$ be the left shift map by one index, i.e.
	\[S((\nu_1, \nu_2, \nu_3, \ldots)) = (\nu_2, \nu_3, \nu_4, \ldots) \qquad \text{for all $\nu = (\nu_1, \nu_2, \nu_3, \ldots) \in \mathcal{B}^\N$}.\] 
	Define $\Omega$ to be the $\omega$-limit of the action of $S$ on $\beta$, i.e. 
	\begin{equation}\label{eq:definition of omega limit}
		\Omega = \bigcap_{n \geq 0} \ol{\{S^k(\beta) \mid k \geq n\}}.
	\end{equation}
	Every $\omega \in \Omega$ is the limit of a sequence $(S^{i_n}(\beta))_{n \geq 1}$ with $i_1 < i_2 < \cdots$. The following lemma shows that this approximation on the level of symbolic encodings can be converted into a geometrical result, where a unique parallel pair of generalised trajectories can be associated with $\omega$ and this pair can be approximated arbitrarily well by parts of the physical trajectories from $p_1$ and $p_2$.

	\begin{lemma}\label{lemma:approximate_by_geodesic}
		Let $\beta$ be the $\mathcal{B}$-coding of parallel phase points $p_1$ and $p_2$. Let $\Omega$ be the $\omega$-limit of $\beta$ under $S$ as defined in \eqref{eq:definition of omega limit}. Then for every $\omega \in \Omega$, there exists a sequence $0 < j_1 < j_2 < \cdots$ such that 
		the following properties hold for $q_m := (\tau \circ f)^{j_m} (p_1)$.
		\begin{enumerate}
			\item for all $m \geq 1$, the first $m$ terms of the $\mathcal{B}$-coding associated with the parallel phase points $q_m$ and $\tau\inv(q_m)$ coincide with the first $m$ terms of $\omega$.
			\item There exists a sequence $(z_k)_{k \geq 0}$ of generalised phase points such that for all $k \geq 0$, \[(\tau \circ f)^k(q_m) \rightarrow z_k\] as $m \rightarrow \infty$. This is illustrated by Figure \ref{fig:approx by geodesics}.
			\item The sequences 
			\[z_0, \tau^{-1}(z_1),z_2,\tau^{-1}(z_3),z_4, \tau^{-1}(z_5),\cdots\] 
			and 
			\[\tau^{-1}(z_0),z_1,\tau^{-1}(z_2),z_3 ,\tau^{-1}(z_4), z_5,\cdots\]
			are two generalised trajectories.
		\end{enumerate}
	\end{lemma}

	\begin{figure}[]
		\centering
		\begin{tikzpicture}[line cap=round,line join=round,>=triangle 45,x=1.2cm,y=1.2cm]
			\clip(-3.543991487611119,-0.43655350245580354) rectangle (9.207249053604746,1.9307654165884738);
			\draw [line width=0.8pt] (-2.3,1.74)-- (-2.24,-0.22);
			\draw [line width=0.8pt] (-0.4,1.76)-- (-0.68,-0.22);
			\draw [line width=0.8pt] (1.26,1.78)-- (1.48,-0.1);
			\draw [line width=0.8pt] (4.145301268658998,1.7365275830150164)-- (3.843267163447756,-0.005202423703129025);
			\draw [line width=0.8pt] (5.635336187701127,1.7667309935361404)-- (5.806488847320831,0.09547561136728401);
			\draw [line width=0.8pt] (7.26,1.82)-- (7.32,-0.12);
			\draw [line width=0.8pt] (8.707064380256549,1.2076128548229907)-- (8.44,-0.22);
			\draw [line width=0.8pt,dotted] (-2.2791844377405597,1.0600249661916157)-- (9.179836098981433,1.3633188216036503);
			\draw [line width=0.8pt] (8.707064380256549,1.2076128548229907)-- (8.282870049869437,2.076201245615645);
			\draw [line width=0.8pt,dotted] (-2.2600990158372247,0.43656785068267073)-- (9.180082817580361,0.7393630890499198);
			\draw [->,line width=0.8pt, -{Latex[width=1.5mm]}] (-2.2760312614052163,0.957021205903741) -- (-1.7986459835373858,0.961464959504541);
			\draw [->,line width=0.8pt, -{Latex[width=1.5mm]}] (-0.5970778294382274,0.3663782061153929) -- (-0.26978349882067176,0.36942483412767513);
			\draw [->,line width=0.8pt, -{Latex[width=1.5mm]}] (1.3523536451047546,0.9907961236502758) -- (1.6910107477553402,0.9939485223293082);
			\draw [->,line width=0.8pt, -{Latex[width=1.5mm]}] (3.914986464151645,0.40837887702262005) -- (4.352083728318066,0.4124476085725636);
			\draw [->,line width=0.8pt, -{Latex[width=1.5mm]}] (5.710644713120435,1.031365392382924) -- (6.217383485308945,1.0360823837690747);
			\draw [->,line width=0.8pt, -{Latex[width=1.5mm]}] (7.302683092530681,0.4399133415080243) -- (7.746999375970695,0.4440492714915571);
			\draw [->,line width=0.8pt, -{Latex[width=1.5mm]}] (-2.2791844377405597,1.0600249661916157) -- (-1.8016462448576172,1.072664301216963);
			\draw [->,line width=0.8pt, -{Latex[width=1.5mm]}] (-2.2600990158372247,0.4365678506826707) -- (-1.7746523140364887,0.44941650570640274);
			\draw (-1.962192028067885,0.004628659729720897) node[anchor=north west] {$Q_0$};
			\draw (-0.046815811750227615,0.03691028135305195) node[anchor=north west] {$Q_1$};
			\draw (2.2882214856707366,0.5856978489496799) node[anchor=north west] {$\cdots$};
			\draw (5.989847431812726,0.069191902976383) node[anchor=north west] {$Q_k$};
			\draw (4.752385269585027,0.05843136243527265) node[anchor=north west] {$Q_{k-1}$};
			\draw [->,line width=0.8pt, -{Latex[width=1.5mm]}] (8.679263071614768,1.0589988556075962) -- (9.126004768298968,1.0631573626150488);
			\draw (7.926744729212604,0.09071298405860372) node[anchor=north west] {{\scriptsize $\cdots$}};
			\draw (-2.9844433794733765,1.5218648760262805) node[anchor=north west] {{\scriptsize $q_m$}};
			\draw (-2.995203920014487,1.166767038169639) node[anchor=north west] {{\scriptsize $z_0$}};
			\draw (-3.5547520281522296,0.714824335443004) node[anchor=north west] {{\scriptsize $\tau^{-1}(q_m)$}};
			\draw (-0.5848428388057494,0.3920081192096936) node[anchor=north west] {{\scriptsize $z_1$}};
			\draw (1.427378242381902,0.9838378489707629) node[anchor=north west] {{\scriptsize $z_2$}};
			\draw (3.945344729001744,0.37048703812747286) node[anchor=north west] {{\scriptsize $z_{k-1}$}};
			\draw (5.699312837202744,1.8738075787529152) node[anchor=north west] {{\scriptsize $(\tau \circ f)^{k}(q_m)$}};
			\draw (5.796157702072739,1.016119470594094) node[anchor=north west] {{\scriptsize $z_k$}};
			\draw (7.356436080533752,0.37048703812747286) node[anchor=north west] {{\scriptsize $z_{k+1}$}};
			\draw [->,line width=0.8pt, -{Latex[width=1.5mm]}] (5.686119365939423,1.2708481942680818) -- (6.164725214772242,1.2835157876849947);
			\draw [->,line width=0.8pt, -{Latex[width=1.5mm]}] (-0.5808668123218005,0.48101325572441184) -- (-0.2628899675416194,0.4894293697261347);
			\draw [->,line width=0.8pt, -{Latex[width=1.5mm]}] (1.3330642396906054,1.1556328608257347) -- (1.68541929735553,1.1649588868652774);
			\draw (-0.477237433394645,1.080682713840756) node[anchor=north west] {{\scriptsize $(\tau \circ f) (q_m)$}};
			\draw (1.405857161299681,1.7478362273895978) node[anchor=north west] {{\scriptsize $(\tau \circ f)^2(q_m)$}};
			\draw [->,line width=0.8pt, -{Latex[width=1.5mm]}] (3.9483701936874147,0.6008917173455617) -- (4.37534989480781,0.612192885654012);
			\draw (3.592950134412848,1.0882881192518595) node[anchor=north west] {{\scriptsize $(\tau \circ f)^{k-1}(q_m)$}};
			\begin{scriptsize}
			\draw [fill=uuuuuu] (-2.2791844377405597,1.0600249661916157) circle (1.0pt);
			\draw [fill=uuuuuu] (-2.2760312614052163,0.957021205903741) circle (1.0pt);
			\draw [fill=uuuuuu] (-2.2600990158372247,0.4365678506826707) circle (1.0pt);
			\draw [fill=uuuuuu] (5.710644713120435,1.031365392382924) circle (1.0pt);
			\draw [fill=uuuuuu] (5.686119365939423,1.2708481942680818) circle (1.0pt);
			\draw [fill=uuuuuu] (-0.5970778294382274,0.3663782061153929) circle (1.0pt);
			\draw [fill=uuuuuu] (1.3523536451047546,0.9907961236502758) circle (1.0pt);
			\draw [fill=uuuuuu] (3.914986464151645,0.40837887702262005) circle (1.0pt);
			\draw [fill=uuuuuu] (7.302683092530681,0.4399133415080243) circle (1.0pt);
			\draw [fill=uuuuuu] (8.679263071614768,1.0589988556075962) circle (1.0pt);
			\draw [fill=uuuuuu] (-0.5808668123218005,0.48101325572441184) circle (1.0pt);
			\draw [fill=uuuuuu] (1.3330642396906054,1.1556328608257347) circle (1.0pt);
			\draw [fill=uuuuuu] (3.9483701936874147,0.6008917173455617) circle (1.0pt);
			\end{scriptsize}
			\end{tikzpicture}
		\caption{}
		\label{fig:approx by geodesics}
	\end{figure}

	The proof of Lemma \ref{lemma:approximate_by_geodesic} requires the following result, whose proof uses elementary geometry and can be found in the appendix.

	\begin{lemma}
		\label{lemma:extend}
		For each $a,b \in \mathcal{A}$ and $i,j \in I_{a,b}$, the billiard map $f$ restricted onto $U^{i,j}_{a,b}$ is uniformly continuous. Thus the map $f|_{U^{i,j}_{a,b}}$ can be continuously extended to $\ol{U^{i,j}_{a,b}}$.
	\end{lemma}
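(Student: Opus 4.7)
The plan is to establish a uniform Lipschitz estimate for $f$ on $\Uabij$ by writing $f$ explicitly in local coordinates and bounding its partial derivatives; once uniform continuity is in hand, the continuous extension to $\cls{\Uabij}$ follows from the standard extension theorem for uniformly continuous maps on a dense subset of a complete metric space.

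The first and crucial step is to show that on $\Uabij$ both the departure angle $\theta$ at $e_a$ and the arrival angle $\xi$ at $e_b$ are uniformly bounded away from $0$ and $\pi$. For the departure angle: if $(x,\theta) \in \Uabij$, then by the definition of $\Uabij$ the translate $\tau^{-1}(x,\theta) = (x + L/\sin\theta,\theta)$ also has its base point on $e_a$, which forces $L/\sin\theta \leq |e_a|$, hence $\sin\theta \geq L/L_{\max}$, where $L_{\max} = \max_{c \in \mathcal{A}} |e_c|$. For the arrival angle: by the commutation relation (\ref{eq:commutation}), $\tau\bigl(f(x,\theta)\bigr) = f\bigl(\tau^{-1}(x,\theta)\bigr)$ is again a phase point in $E_b$, so the image $f(x,\theta)$ and its parallel partner $\tau\bigl(f(x,\theta)\bigr)$ both lie on $e_b$ at parallel separation $L$, i.e.\ at arclength distance $L/\sin\xi$ along $e_b$. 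The same reasoning yields $\sin\xi \geq L/L_{\max}$.

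The second step is an explicit local-coordinate computation. Placing $e_a$ on the $x$-axis and writing $e_b$ as the segment through a point $(p_x, p_y)$ with direction angle $\psi$, the trajectory from $(x, \theta) \in E_a$ is the ray $t \mapsto (x + t\cos\theta, t\sin\theta)$, and elementary trigonometry gives its intersection with $e_b$ at arclength parameter
\[
s(x,\theta) = \frac{(p_x - x)\sin\theta - p_y\cos\theta}{\sin(\psi - \theta)}.
\]
The outgoing angular coordinate is an affine function of $\theta$. Because $|\sin(\psi - \theta)| = \sin\xi$ is bounded below by $L/L_{\max}$, because $\sin\theta$ is bounded below similarly, and because $x$, $p_x$, $p_y$ remain in the compact polygon $Q$, the partial derivatives $\partial s/\partial x$ and $\partial s/\partial \theta$ are uniformly bounded on $\Uabij$. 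This produces the desired Lipschitz estimate, and a fortiori the uniform continuity of $f|_{\Uabij}$.

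Finally, uniform continuity on the open set $\Uabij$ guarantees a unique continuous extension of $f|_{\Uabij}$ to $\cls{\Uabij}$, viewed inside the complete metric space $\cls{V}$. The main obstacle is really the first step: without the uniform angle bound, the factor $\sin(\psi - \theta)$ appearing in the denominator of $s(x,\theta)$ could cause the derivatives to blow up, and it is precisely the definition of $\Uabij$ in terms of a pair of parallel trajectories of fixed perpendicular separation $L > 0$ that rules this out.
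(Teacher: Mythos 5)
Your proposal is correct and follows essentially the same route as the paper's proof in Appendix~A: an explicit trigonometric formula for $f$ in the $(x,\theta)$ coordinates, with the crucial lower bound $\sin\xi \geq L/|e_b|$ on the arrival angle obtained exactly as in the paper, from the fact that $f(x,\theta)$ and its parallel translate at separation $L$ both land on $e_b$. The only cosmetic difference is that you also bound the departure angle away from $0$ and $\pi$, which is true but not actually needed, since $\sin\theta$ appears only in numerators.
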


	In addition, we can also extend the map $\tau : F \rightarrow V$ defined by (\ref{def:tau}) continuously to $\ol{F}$, where $F$ is the open subset of $V$ defined in (\ref{def:F}). By Lemma~\ref{lemma:extend}, we may compose the extended map $f$ with $\tau$ to get a continuous map from $\ol{U^{i,j}_{a,b}}$ to $\ol{V}$.

	\begin{proof}[Proof of Lemma \ref{lemma:approximate_by_geodesic}]
		It follows from the definition of $\Omega$ that there exists a sequence $0 \leq i_1 < i_2 < \cdots$ such that $(S^{i_n}(\beta))_{n \geq 1}$ converges to $\omega \in \Omega$.
		Hence, for all $m \geq 1$ there exists $n(m) \geq 0$ such that
		\[S^{i_{n(m)}}(\beta)_k = \omega_k \quad \text{ for } \quad 0 \leq k \leq m.\]
		By definition of $\mathcal{B}$-coding, for $0 \leq k \leq m$, the phase point $(\tau \circ f)^{i_{n(m)} + k}(p_1)$ lies in $U^{i,j}_{a,b}$ where $a,b,i,j$ are such that $\omega_k = (i,j) \in I_{a,b} \times I_{a,b}$. 
		Let us set \[j_m := i_{n(m)}.\]
		Then $q_m := (\tau \circ f)^{j_m}(p_1)$ satisfies (1) by construction.
		To simplify the notation, we also define
		\begin{equation*}
			U_k = U^{i_k,j_k}_{a_k,b_k} \quad
			\text{where} \quad \omega_k = (i_k, j_k) \in I_{a_k, b_k} \times I_{a_k, b_k},
		\end{equation*}
		We can therefore write
		\[q_{m'} \in \bigcap_{k = 0}^{m} (\tau \circ f)^{-k}(U_k) \quad \text{ for all } m' \geq m.\]
		Since $q_m$ is in the relatively compact set $U_0$ for all $m \geq 1$, by extracting a subsequence, we may assume that $q_m$ converges to a generalised phase point $z_0 \in \ol{U_0}$ as $m \rightarrow \infty$. The continuity of $\tau \circ f$, which is guaranteed by Lemma~\ref{lemma:extend} and the remarks after it, allows us to define the generalised phase point 
		\begin{equation}\label{eq:convergence to z_k}
			z_k := \lim_{m \rightarrow \infty} (\tau\circ f)^k(q_m) \in \ol{U_k}
		\end{equation}
		for all $k \geq 1$.
		By construction, the sequence $(z_k)$ satisfies the required properties. In particular, we deduce from \eqref{eq:convergence to z_k} and Lemma \ref{lem:commutation lemma} that the sequences \[z_0, \tau^{-1}(z_1),z_2,\tau^{-1}(z_3), z_4,\ldots \qquad \text{and} \qquad \tau^{-1}(z_0),z_1,\tau^{-1}(z_2),z_3,\tau^{-1}(z_4), \ldots\] are generalised trajectories.
	\end{proof}

	As an application of Lemma \ref{lemma:approximate_by_geodesic}, we prove the following result which will be used in the proof of the main theorem in Section \ref{sec:the proof}. 

	\begin{corollary}\label{cor:rule out periodic omega}
		Let $\beta$ be the $\mathcal{B}$-coding of parallel phase points $p_1$ and $p_2$. Suppose the edge coding of $p_1$ and $p_2$ is non-periodic. Then the $\omega$-limit set $\Omega$ of $\beta$ under the left shift map $S$ does not contain periodic elements.
	\end{corollary}

	In preparation for the proof of Corollary \ref{cor:rule out periodic omega}, we recall the following result on periodic edge codings in \cite[Theorem 1]{Galperin1995}. This result can be readily extended to generalised trajectories.

	\begin{theorem}\label{thm:periodic coding implies periodic trajectory}
		If a phase point $p$ has infinite forward orbit under the billiard flow and the associated edge coding is periodic, then the orbit of $p$ under the billiard map $f$ is periodic.
	\end{theorem}

	\begin{proof}[Proof of Corollary \ref{cor:rule out periodic omega}]
		Suppose there exists a periodic $\omega \in \Omega$. We show that the edge coding of $p_1$ and $p_2$ must be periodic. Apply Lemma \ref{lemma:approximate_by_geodesic} to $\omega$ to find a subsequence $(q_m)_{m \geq 1}$ from the sequence of phase points $((\tau \circ f)^n p_1)_{n \geq 1}$ and a sequence of generalised phase points $(z_n)_{n \geq 1}$ satisfying the conclusion of Lemma \ref{lemma:approximate_by_geodesic}. Construct the infinite corridor $Q^\infty$ by unfolding the polygon $Q$ according to $\omega$. 
		
		By periodicity of $\omega$, we have $S^T(\omega) = \omega$ for some $T > 1$. We may assume $T$ is even since if $T$ is odd it suffices to consider $2T$. Hence, the polygon $Q_T$ in the unfolding $Q^\infty$ is obtained from $Q$ by an even number of reflections and thus has the same orientation as $Q_0$.
		Theorem \ref{thm:periodic coding implies periodic trajectory} extended to generalised trajectories implies that the unfolding $Q^\infty$ is periodic and $z_n = z_{n + T}$ for all $n \geq 0$. In particular, we have that $Q_T$ is a parallel translation of $Q_0$ (See Figure \ref{fig:omega periodic}). By periodicity, the unfolding $Q^\infty$ is obtained by joining infinitely many copies of $Q_0^{T-1}$ end to end. 

		Choose any $m \geq T$. The phase points $f^T(q_m)$ and $q_m$ lie on the same edge of $Q$ by Lemma \ref{lemma:approximate_by_geodesic} point (1) and they are parallel. By point (2) of Lemma \ref{lemma:approximate_by_geodesic} and the periodicity $z_n = z_{n + T}$, both $f^T(q_m)$ and $q_m$ converge to $z_0$ and both $f^T(\tau\inv(q_m))$ and $\tau\inv(q_m)$ converge to $\tau\inv(z_0)$. With a sufficiently large $m$, we may assume that the base point of either $f^T(q_m)$ or $f^T(\tau^{-1}(q_m))$ lies between the base points of $q_m$ and $\tau\inv(q_m)$. 

		Suppose for example that the base point of $f^T(q_m)$ lies strictly between the base points of $q_m$ and $\tau\inv(q_m)$ as shown in Figure \ref{fig:omega periodic}. Consider the four parallel physical trajectories $S_m$, $l$, $S'_m$ and $l'$ from $q_m$, $f^T(q_m)$, $\tau\inv(q_m)$ and $f^T(\tau^{-1}(q_m))$ respectively. 
		If $l$ intersects an edge $e$ in $Q^T_0$, then so does $l'$ by edge coding. This forces $S_m'$ to intersect $e$ since $S_m'$ lies between $l$ and $l'$.
		Similarly, if $S_m$ intersects an edge in $Q^T_0$, then $l$ must intersect this edge, too. It follows that the first $T$ terms of the edge coding of $f^T(q_m)$ coincide with the first $T$ terms of the edge coding of $q_m$. This further implies that the base point of $f^{2T}(q_m)$ lies between the base points of $f^T(q_m)$ and $f^T(\tau^{-1}(q_m))$.

		By induction on $n \geq 1$, we can deduce that the first $T$ terms of the edge codings of $f^{nT}(q_m)$ and $f^{(n+1)T}(q_m)$ coincide, and in particular, the edge coding of $q_m$ is periodic with period $\leq T$. By Theorem \ref{thm:periodic coding implies periodic trajectory}, the physical trajectory from $q_m$ is periodic, and thus the edge coding associated to $p_1$ and $p_2$ is also periodic.
	\begin{figure}[]
		\centering
		\begin{tikzpicture}[line cap=round,line join=round,>=triangle 45,x=1.0cm,y=1.0cm]
			\clip(-5.764933592518881,0.5195969900826466) rectangle (7.560969957905934,4.162010627198767);
			\fill[line width=2.pt,color=uuuuuu,fill=uuuuuu,fill opacity=0.10000000149011612] (-2.56,2.86) -- (-1.6255453986239186,3.328346911422567) -- (-0.8698529835023866,3.063854566130031) -- (-0.8787689573631154,0.8083249003418513) -- (-1.3666195877186724,0.7028096462182266) -- (-3.059745471711411,0.7287248383201561) -- cycle;
			\fill[line width=2.pt,color=uuuuuu,fill=uuuuuu,fill opacity=0.10000000149011612] (5.088617914576904,3.430173652900992) -- (6.023072515952985,3.898520564323559) -- (6.778764931074517,3.634028219031023) -- (6.769848957213789,1.3784985532428433) -- (6.281998326858231,1.2729832991192187) -- (4.588872442865492,1.298898491221148) -- cycle;
			\draw [line width=0.8pt] (-2.56,2.86)-- (-3.059745471711411,0.7287248383201562);
			\draw [line width=0.8pt] (5.088617914576902,3.430173652900992)-- (4.588872442865492,1.298898491221148);
			\draw [line width=0.8pt,dotted] (-2.623707292055822,2.588306154107852)-- (10.61577286767017,2.8098874120112165);
			\draw [line width=0.8pt,dotted] (-2.810042256668076,1.7936394601394463)-- (10.569513253324526,2.0175650753694483);
			\draw [line width=0.8pt,dotted] (-2.914130034992727,1.3497340940376554)-- (10.506162459832268,1.5743414998087855);
			\draw [line width=0.8pt,dotted] (-2.7277950703804743,2.144400788006061)-- (10.453057548747484,2.365000831840839);
			\draw [line width=0.8pt,color=uuuuuu] (-2.56,2.86)-- (-1.6255453986239186,3.328346911422567);
			\draw [line width=0.8pt,color=uuuuuu] (-1.6255453986239186,3.328346911422567)-- (-0.8698529835023866,3.063854566130031);
			\draw [line width=0.8pt,color=uuuuuu] (-0.8698529835023866,3.063854566130031)-- (-0.8787689573631154,0.8083249003418513);
			\draw [line width=0.8pt,color=uuuuuu] (-0.8787689573631154,0.8083249003418513)-- (-1.3666195877186724,0.7028096462182266);
			\draw [line width=0.8pt,color=uuuuuu] (-1.3666195877186724,0.7028096462182266)-- (-3.059745471711411,0.7287248383201561);
			\draw [line width=0.8pt,color=uuuuuu] (-3.059745471711411,0.7287248383201561)-- (-2.56,2.86);
			\draw [line width=0.8pt,color=uuuuuu] (5.088617914576904,3.430173652900992)-- (6.023072515952985,3.898520564323559);
			\draw [line width=0.8pt,color=uuuuuu] (6.023072515952985,3.898520564323559)-- (6.778764931074517,3.634028219031023);
			\draw [line width=0.8pt,color=uuuuuu] (6.778764931074517,3.634028219031023)-- (6.769848957213789,1.3784985532428433);
			\draw [line width=0.8pt,color=uuuuuu] (6.769848957213789,1.3784985532428433)-- (6.281998326858231,1.2729832991192187);
			\draw [line width=0.8pt,color=uuuuuu] (6.281998326858231,1.2729832991192187)-- (4.588872442865492,1.298898491221148);
			\draw [line width=0.8pt,color=uuuuuu] (4.588872442865492,1.298898491221148)-- (5.088617914576904,3.430173652900992);
			\draw [->,line width=0.8pt, -{Latex[width=1.5mm]}] (-2.623707292055822,2.588306154107852) -- (-1.9070927665688975,2.6002997026515664);
			\draw [->,line width=0.8pt, -{Latex[width=1.5mm]}] (-2.7277950703804743,2.144400788006061) -- (-1.922776364499923,2.157873904422472);
			\draw [->,line width=0.8pt, -{Latex[width=1.5mm]}] (-2.810042256668076,1.7936394601394463) -- (-1.8922038657466536,1.8090007721213948);
			\draw [->,line width=0.8pt, -{Latex[width=1.5mm]}] (-2.914130034992727,1.3497340940376554) -- (-1.8840184152400934,1.3669744558745196);
			\draw (-3.3662709534424144,3.118148182415489) node[anchor=north west] {$q_m$};
			\draw (-4.454553076727108,2.1631250946350424) node[anchor=north west] {$\tau^{-1}(q_m)$};
			\draw (-4.099195648715779,2.673951397401328) node[anchor=north west] {$f^T(q_m)$};
			\draw (-5.342946646755428,1.6745086311194655) node[anchor=north west] {$f^T(\tau^{-1}(q_m))$};
			\draw (-1.9670510806478088,1.3857807208602608) node[anchor=north west] {$Q_0$};
			\draw (5.58429426459292,3.7178138421846065) node[anchor=north west] {$Q_T$};
			\draw [->,line width=0.8pt, -{Latex[width=1.5mm]}] (4.9208228441964295,2.714574440907053) -- (5.627760906561677,2.7264060402771833);
			\draw [->,line width=0.8pt, -{Latex[width=1.5mm]}] (4.734487879584176,1.9199077469386476) -- (5.4966834559992295,1.9326641582594015);
			\draw (0.9868575396963587,3.4512957711761096) node[anchor=north west] {$S_m$};
			\draw (2.252818376986716,2.718371075902744) node[anchor=north west] {$l$};
			\draw (1.0756968966991909,2.2297546123871665) node[anchor=north west] {$S'_m$};
			\draw (2.2306085377360083,1.519039756364509) node[anchor=north west] {$l'$};
			\begin{scriptsize}
			\draw [fill=uuuuuu] (4.9208228441964295,2.714574440907053) circle (1.0pt);
			\draw [fill=uuuuuu] (-2.7277950703804743,2.144400788006061) circle (1.0pt);
			\draw [fill=uuuuuu] (-2.623707292055822,2.588306154107852) circle (1.0pt);
			\draw [fill=uuuuuu] (-2.810042256668076,1.7936394601394463) circle (1.0pt);
			\draw [fill=uuuuuu] (-2.914130034992727,1.3497340940376554) circle (1.0pt);
			\draw [fill=uuuuuu] (4.734487879584176,1.9199077469386476) circle (1.0pt);
			\end{scriptsize}
			\end{tikzpicture}
		\caption{}
		\label{fig:omega periodic}
	\end{figure}
	\end{proof}

	\section{Proof of the Main Theorem} \label{sec:the proof}

	In this section, we take on the proof of the main theorem.

	\begin{theorem*}\label{thm:main theorem}
		Let $Q$ be a polygon such that all the holes of $Q$ have non-zero minimal diameters. Suppose the edges of $Q$ are indexed by $\mathcal{A}$. Then for any non-periodic sequence $\alpha$ in $\mathcal{A}$, the set $X(\alpha)$ contains at most one point, i.e. there is at most one point $p \in V$ whose edge coding is equal to $\alpha$.
	\end{theorem*}

	The starting point of the proof is the observation that $X(\alpha)$ consists of parallel phase points. This well-known result is proven for example in \cite[Lemma 1]{Galperin1995}. 

	\begin{lemma}\label{lem:rule out non-parallel phase points}
		If two phase points $(x, \theta)$ and $(x', \theta')$ with infinite forward orbit under $f$ do not have the same direction, that is $\theta \neq \theta'$, then they do not have the same edge coding.
	\end{lemma}

	Let us assume $Q$ is a polygon satisfying the condition of the main theorem. Let $\delta > 0$ be smaller than the minimal diameter of every hole in $Q$. Our proof of the main theorem proceeds by contradiction. Suppose towards a contradiction that there exists at least one non-periodic $\alpha \in \mathcal{A}^{\N}$ with at least two distinct phase points in $X(\alpha)$. In view of Lemma \ref{lem:rule out non-parallel phase points}, all phase points in $X(\alpha)$ are necessarily parallel. 

	Under the above assumption, we now choose $\epsilon > 0$ and parallel phase points $p$ and $q$ with a parallel separation $L > 0$ satisfying the following properties.
	\begin{enumerate}
		\item The two edge codings of $p$ and $q$ are identical and non-periodic.
		\item The edge coding of any two parallel phase points\footnote{Recall that parallel phase points have the same edge coding by definition.} with parallel separation $\geq L + \frac{\epsilon}{4}$ is periodic.
		\item $\epsilon$ is smaller than both $L /2$ and $\delta / 2$.
	\end{enumerate}
	This choice is possible as the parallel separation is bounded by the maximal length of the edges of $Q$. 
	Assume without loss of generality that $p$ is on the left side of $q$. Let $\tau$ be the left translation map associated to $(p,q)$ as defined in Section \ref{sec:encoding parallel trajectories}. Then $q = \tau\inv(p)$. Let $\beta \in \mathcal{B}^{\N}$ be the $\mathcal{B}$-coding of $p$ and $q$. As defined in \eqref{eq:definition of omega limit}, we let $\Omega$ denote the $\omega$-limit set of $\beta$ under the left shift map $S$.
	Recall the following classical theorem due to Birkhoff (see for example \cite[Theorem 1.16]{furstenberg2014recurrence}) 

	\begin{theorem}\label{thm:uniform_recurrence}
		If $Z$ is compact and $T: Z \rightarrow Z$ is continuous, then the topological dynamical system $(Z, T)$ contains a uniformly recurrent point $z \in Z$, i.e. for all open neighbourhood $V$ of $z$, the set
		\[\{n \in \mathbb{N} \mid T^n(z) \in V\}\]
		can be arranged as an increasing sequence $s_1 < s_2 < s_3 < \cdots$ with bounded gaps $s_{n+1} - s_n$.
	\end{theorem}

	Applying Theorem \ref{thm:uniform_recurrence} with $Z = \Omega$ and $T = S$, we obtain a uniformly recurrent point $\omega \in \Omega$ under $S$. Since $p$ and $q$ are assumed to have non-periodic edge coding, by Corollary \ref{cor:rule out periodic omega}, the point $\omega$ is a non-periodic sequence.
	By Lemma~\ref{lemma:approximate_by_geodesic} applied on $\omega \in \Omega$, there exist a sequence $(q_m = (\tau \circ f)^{j_m}(p))_{m \geq 1}$ of phase points and a sequence $(z_n)_n$ of generalised phase points such that the sequences 
	\[z_0, \tau^{-1}(z_1),z_2,\tau^{-1}(z_3),z_4, \tau^{-1}(z_5),\cdots \tag{L} \label{eq:left Gamma}\] 
	and 
	\[\tau^{-1}(z_0),z_1,\tau^{-1}(z_2),z_3 ,\tau^{-1}(z_4), z_5,\cdots \tag{R} \label{eq:right Gamma}\]
	are two generalised trajectories which can be approximated arbitrarily well in the sense of Lemma \ref{lemma:approximate_by_geodesic} point (2) by parallel trajectories from $q_m$ and $\tau\inv(q_m)$ as $m \rightarrow \infty$. Let $\beta(m)$ denote the $\mathcal{B}$-coding of $q_m$ and $\tau\inv(q_m)$. Note that $\beta(m) = S^{j_m}(\beta)$.

	\paragraph{Locating vertices in \texorpdfstring{$Q^\infty$}{Q infty}}

	When $Q$ is unfolded according to $\omega$, the two sequences \eqref{eq:left Gamma} and \eqref{eq:right Gamma} define two generalised trajectories $\Gamma_L$ and $\Gamma_R$ respectively in $Q^\infty$. These two trajectories satisfy properties in Remark \ref{rem:properties of generalised trajectories}.
	Note that the parallel separation between $\Gamma_L$ and $\Gamma_R$ is the same as the parallel separation $L$ between $p$ and $q$.

	Let us also define $S_m$ and $S'_m$ to be the parallel billiard trajectories from $q_m$ and $\tau\inv(q_m)$. 
	For $m \geq 1$, let $N \geq 1$ be the largest integer depending on $m$ such that $\beta(m)_{n} = \omega_{n}$ for $n = 0, 1,2, \ldots, N-1$. The index $N$ is the smallest index at which $\beta(m)$ and $\omega$ differ, and the four trajectories $S_m$, $S'_m$, $\Gamma_L$ and $\Gamma_R$ intersect the same sequence of edges in the finite corridor $Q_0^{N-1}$. It follows from point (1) of Lemma \ref{lemma:approximate_by_geodesic} that $N \geq m$. We have the following lemma describing the location of some vertices in $Q_0^N$.

	\begin{lemma}\label{lem:splitting vertices}
		Suppose that $\Gamma_L$ and $\Gamma_R$ enter $Q_N$ via the edge $e$ and exit $Q_N$ via the edge $e'$. In other words, the edge $e$ is the boundary between $Q_{N-1}$ and $Q_{N}$ and $e'$ is the boundary between $Q_{N}$ and $Q_{N+1}$. 
		Let $\beta(m)_{N} = (i,j) \in I_{a,b} \times I_{a,b}$ and  $\omega_{N} = (i',j') \in I_{a,b'} \times I_{a,b'}$ where $a, b, b' \in \mathcal{A}$. Note that $e = e_{a}$ and $e' = e_{b'}$.
		Within the finite corridor $Q_0^{N-1}$, let $\CT_1$ be the interior of the region bounded by the straight lines $S_m$, $\Gamma_L$, $e$ and $\CT_2$ be the interior of the region bounded by the straight lines $S'_m$, $\Gamma_R$, $e$. The regions $\CT_1$ and $\CT_2$ are illustrated in Figure \ref{fig:splitting vertex case one}.
		The following statements hold.
		\begin{enumerate}
			\item The regions $\CT_1$ and $\CT_2$ do not contain any vertices. 
			\item Suppose $S_m$ bounds $\CT_1$ from the left. No vertex in $\ol{\CT_1} \cap \Gamma_L$ blocks\footnote{See Definition \ref{def:blocking vertices}.} $\Gamma_L$ from the left and no vertex in $\ol{\CT_2} \cap \Gamma_R$ blocks $\Gamma_R$ from the left. Analoguous statement holds if $S_m$ bounds $\CT_1$ from the right.
			\item If $b = b'$, then there exists a vertex $v$ in $Q_N$ lying strictly between $S_m$ and $\Gamma_L$ or between $S'_M$ and $\Gamma_R$. Moreover, the vertex $v$ is attached to a hole of $Q$.
			\item If $b \neq b'$, then the base points of $f^{N+1}(q_m)$ and $f^{N+1}(\tau\inv(q_m))$ either both lie on the right side of $\Gamma_R$ or both lie on the left side of $\Gamma_L$.
		\end{enumerate}
	\end{lemma}
	\begin{proof}
		It follows from the choice of $N$ and the definition of $\mathcal{B}$-codings that the first $N$ terms of the $V$-codings associated to $S_m$ and $\Gamma_L$ coincide.
		By Lemma \ref{rem:V partition detects holes}, there are no holes between the trajectories $S_m$ and $\Gamma_L$ before they meet the edge $e$. Also, there must not be any vertices of the outer boundary of $Q_n$ with $n = 0, 1, \ldots, N-1$ which lies trictly between $S_m$ and $\Gamma_L$, for, if not, the interior of a non-reflecting edge would transversally intersect either $S_m$ or $\Gamma_L$, contradicting Remark \ref{rem:properties of generalised trajectories}. Similar argument applies to $S'_m$ and $\Gamma_R$. This implies (1).

		The statement (2) follows immediately from the first statement of Lemma \ref{lem:blocking vertices make V coding different} and the assumption that the $\mathcal{B}$-coding of $(S_m, S'_m)$ coincide with the $\mathcal{B}$-coding of $(\Gamma_L, \Gamma_R)$ up to the $N$-th term.
		
		If $b = b'$, then the four trajectories $S_m, S'_m, \Gamma_L$ and $\Gamma_R$ exit $Q_N$ by the same edge $e'$ as shown in Figure \ref{fig:splitting vertex case one}. We must have $i \neq i'$ or $j \neq j'$. It follows from Remark \ref{rem:V partition detects holes} that there is a hole in $Q_N$ lying either between $S_m$ and $\Gamma_L$ or between $S'_m$ and $\Gamma_R$. The vertex $v$ can be taken to be any vertex on this hole. This proves (3).

		If $b \neq b'$, then $S_m$ and $S'_m$ exit $Q_N$ by a different edge from $e'$ as shown in Figure \ref{fig:splitting vertex case two}. Thus neither of these two phase points lies on the segment of edge $e'$ between $z_{N+1}$ and $\tau\inv(z_{N+1})$. Hence, $f^{N+1}(q_m)$ and $f^{N+1}(\tau\inv(q_m))$ either both lie on the right side of $\Gamma_R$ or both lie on the left side of $\Gamma_L$. This shows (4).
	\end{proof}

	\begin{figure}[]
		\centering
		\begin{subfigure}[b]{\textwidth}
			\centering
			\begin{tikzpicture}[line cap=round,line join=round,>=triangle 45,x=.9cm,y=.7cm]
				\clip(-4.303799249037241,-0.5818127122884735) rectangle (11.788104781207128,4.212475279510988);
				\fill[line width=2.pt,color=uuuuuu,fill=ffffff,fill opacity=1.0] (7.4,3.38) -- (8.56,3.94) -- (10.22,3.4) -- (10.04,-0.14) -- (8.88,-0.52) -- (8.,0.24) -- cycle;
				\fill[line width=2.pt,color=ffffff,fill=ffffff,fill opacity=1.0] (9.08,2.46) -- (8.38,2.38) -- (9.06,2.08) -- (9.6,2.26) -- cycle;
				\fill[line width=0.4pt,color=ududff,fill=ududff,pattern=north east lines,pattern color=ududff] (0.7534658680654296,1.9099735715602402) -- (7.552699543132439,2.5808723909402347) -- (7.685001297657424,1.88849320892615) -- cycle;
				\fill[line width=0.4pt,color=ududff,fill=ududff,pattern=north east lines,pattern color=ududff] (0.7455503645827102,1.007606174529642) -- (7.721789065040014,1.695970559623924) -- (7.857535042331567,0.9855666117981376) -- cycle;
				\draw [line width=0.8pt] (-2.38,2.62)-- (-2.4,0.34);
				\draw [line width=0.8pt] (-0.82,2.62)-- (-0.38,0.28);
				\draw [line width=0.8pt] (1.38,2.74)-- (0.82,0.16);
				\draw [line width=0.8pt] (5.56,2.92)-- (5.86,0.14);
				\draw [line width=0.8pt] (7.4,3.38)-- (8.,0.24);
				\draw [line width=0.8pt,color=uuuuuu] (7.4,3.38)-- (8.56,3.94);
				\draw [line width=0.8pt,color=uuuuuu] (8.56,3.94)-- (10.22,3.4);
				\draw [line width=0.8pt,color=uuuuuu] (10.22,3.4)-- (10.04,-0.14);
				\draw [line width=0.8pt,color=uuuuuu] (10.04,-0.14)-- (8.88,-0.52);
				\draw [line width=0.8pt,color=uuuuuu] (8.88,-0.52)-- (8.,0.24);
				\draw [line width=0.8pt,color=uuuuuu] (8.,0.24)-- (7.4,3.38);
				\draw [line width=0.8pt] (8.38,2.38)-- (9.08,2.46);
				\draw [line width=0.8pt] (9.08,2.46)-- (9.6,2.26);
				\draw [line width=0.8pt] (9.6,2.26)-- (9.06,2.08);
				\draw [line width=0.8pt] (9.06,2.08)-- (8.38,2.38);
				\draw [line width=0.8pt,dotted] (-2.3861429560667844,1.9197030083865507)-- (11.779986994161224,1.8758031233548078);
				\draw [line width=0.8pt,dotted] (-2.3889482188197277,1.5999030545510504)-- (11.883140873091772,3.0081687767546876);
				\draw [line width=0.8pt,dotted] (-2.3940584595495094,1.0173356113559526)-- (11.859978813549034,0.973163307436269);
				\draw [line width=0.8pt,dotted] (-2.396863722302452,0.6975356575204523)-- (11.974251051758941,2.11557251223955);
				\draw [->,line width=0.8pt, -{Latex[width=1.5mm]}] (-2.386142956066785,1.9197030083865507) -- (-1.6560803693239552,1.9174405934760146);
				\draw [->,line width=0.8pt, -{Latex[width=1.5mm]}] (-2.3889482188197273,1.5999030545510504) -- (-1.6250597512964962,1.6752780005363095);
				\draw [->,line width=0.8pt, -{Latex[width=1.5mm]}] (-2.3940584595495094,1.0173356113559526) -- (-1.7993187454004675,1.0154925529365617);
				\draw [->,line width=0.8pt, -{Latex[width=1.5mm]}] (-2.396863722302452,0.6975356575204523) -- (-1.819898483229745,0.7544663818073126);
				\draw (-1.8818539166004122,2.8223099882085414) node[anchor=north west] {{\scriptsize $Q_0$}};
				\draw (-0.06394545874334676,2.911423147907416) node[anchor=north west] {{\scriptsize $Q_1$}};
				\draw (3.1441282904161807,2.964891043726741) node[anchor=north west] {{\scriptsize $\cdots$}};
				\draw (6.01357203271998,3.339166314462015) node[anchor=north west] {{\scriptsize $Q_{N-1}$}};
				\draw (8.544385768168052,3.5708605296790896) node[anchor=north west] {{\scriptsize $Q_N$}};
				\draw (6.227443615997282,1.62819364824362) node[anchor=north west] {{\scriptsize $\mathcal{T}_2$}};
				\draw (6.191798352117732,2.5549705091119175) node[anchor=north west] {{\scriptsize $\mathcal{T}_1$}};
				\draw (11.021731607796797,3.6421510574381895) node[anchor=north west] {{\scriptsize $S_m$}};
				\draw (11.021731607796797,2.6975515646301167) node[anchor=north west] {{\scriptsize $S'_m$}};
				\draw (11.003908975857021,1.895533127340244) node[anchor=north west] {{\scriptsize $\Gamma_L$}};
				\draw (10.932618448097921,0.9687562664719467) node[anchor=north west] {{\scriptsize $\Gamma_R$}};
				\draw (-3.254196575963099,1.9202915509186687) node[anchor=north west] {{\scriptsize $q_m$}};
				\draw (-4.1245207980359037,1.0222241622912716) node[anchor=north west] {{\scriptsize $\tau^{-1}(q_m)$}};
				\draw (-3.236373944023324,2.3658573494130427) node[anchor=north west] {{\scriptsize $z_0$}};
				\draw (-4.053230270276803,1.5034352246651952) node[anchor=north west] {{\scriptsize $\tau^{-1}(z_0)$}};
				\draw (7.427443615997282,0.82819364824362) node[anchor=north west] {{\scriptsize $e$}};
				\draw (9.427443615997282,0.62819364824362) node[anchor=north west] {{\scriptsize $e'$}};
				\begin{scriptsize}
				\draw [fill=uuuuuu] (-2.3940584595495094,1.0173356113559526) circle (1.0pt);
				\draw [fill=uuuuuu] (-2.396863722302452,0.6975356575204523) circle (1.0pt);
				\draw [fill=uuuuuu] (-2.386142956066785,1.9197030083865507) circle (1.0pt);
				\draw [fill=uuuuuu] (-2.3889482188197273,1.5999030545510504) circle (1.0pt);
				\end{scriptsize}
				\end{tikzpicture}
			\caption{If $b = b'$, then the four trajectories $S_m, S'_m, \Gamma_L$ and $\Gamma_R$ exit $Q_N$ by the same edge $e'$. There is a hole in $Q_N$ lying either between $S_m$ and $\Gamma_L$ or between $S'_m$ and $\Gamma_R$.}
			\label{fig:splitting vertex case one}
		\end{subfigure}

		\begin{subfigure}[b]{\textwidth}
			\centering
			\begin{tikzpicture}[line cap=round,line join=round,>=triangle 45,x=0.9cm,y=0.7cm]
				\clip(-4.898667335160345,-0.728277996571486) rectangle (12.13094502584317,4.314902306374234);
				\fill[line width=2.pt,color=zzttqq,fill=zzttqq,fill opacity=0] (7.4,3.38) -- (8.56,3.94) -- (10.22,3.4) -- (10.04,-0.14) -- (8.88,-0.52) -- (8.,0.24) -- cycle;
				\fill[line width=0.4pt,color=ududff,fill=ududff,pattern=north east lines,pattern color=ududff] (0.7534658680654296,1.9099735715602402) -- (7.552699543132439,2.5808723909402347) -- (7.685001297657424,1.88849320892615) -- cycle;
				\fill[line width=0.4pt,color=ududff,fill=ududff,pattern=north east lines,pattern color=ududff] (0.7455503645827102,1.007606174529642) -- (7.721789065040014,1.695970559623924) -- (7.857535042331567,0.9855666117981376) -- cycle;
				\fill[line width=0.4pt,color=zzttqq,fill=zzttqq,fill opacity=0] (10.22,3.4) -- (10.885579083238898,1.9340960721973857) -- (10.04,-0.14) -- cycle;
				\draw [line width=0.8pt] (-2.38,2.62)-- (-2.4,0.34);
				\draw [line width=0.8pt] (-0.82,2.62)-- (-0.38,0.28);
				\draw [line width=0.8pt] (1.38,2.74)-- (0.82,0.16);
				\draw [line width=0.8pt] (5.56,2.92)-- (5.86,0.14);
				\draw [line width=0.8pt] (7.4,3.38)-- (8.,0.24);
				\draw [line width=0.8pt] (7.4,3.38)-- (8.56,3.94);
				\draw [line width=0.8pt] (8.56,3.94)-- (10.22,3.4);
				\draw [line width=0.8pt] (10.04,-0.14)-- (8.88,-0.52);
				\draw [line width=0.8pt] (8.88,-0.52)-- (8.,0.24);
				\draw [line width=0.8pt] (8.,0.24)-- (7.4,3.38);
				\draw [line width=0.8pt,dotted] (-2.3861429560667844,1.9197030083865507)-- (11.779986994161224,1.8758031233548078);
				\draw [line width=0.8pt,dotted] (-2.3889482188197277,1.5999030545510504)-- (11.883140873091772,3.0081687767546876);
				\draw [line width=0.8pt,dotted] (-2.3940584595495094,1.0173356113559526)-- (11.859978813549034,0.973163307436269);
				\draw [line width=0.8pt,dotted] (-2.396863722302452,0.6975356575204523)-- (11.974251051758941,2.11557251223955);
				\draw [->,line width=0.8pt, -{Latex[width=1.5mm]}] (-2.386142956066785,1.9197030083865507) -- (-1.6560803693239552,1.9174405934760146);
				\draw [->,line width=0.8pt, -{Latex[width=1.5mm]}] (-2.3889482188197273,1.5999030545510504) -- (-1.6250597512964962,1.6752780005363095);
				\draw [->,line width=0.8pt, -{Latex[width=1.5mm]}] (-2.3940584595495094,1.0173356113559526) -- (-1.7993187454004675,1.0154925529365617);
				\draw [->,line width=0.8pt, -{Latex[width=1.5mm]}] (-2.396863722302452,0.6975356575204523) -- (-1.819898483229745,0.7544663818073126);
				\draw (-1.8798622242420997,2.8232574280381764) node[anchor=north west] {{\scriptsize $Q_0$}};
				\draw (-0.050821480568103726,2.9120458136534175) node[anchor=north west] {{\scriptsize $Q_1$}};
				\draw (3.145560401580627,2.9653188450225625) node[anchor=north west] {{\scriptsize $\cdots$}};
				\draw (6.0045464183914365,3.3382300646065772) node[anchor=north west] {{\scriptsize $Q_{N-1}$}};
				\draw (8.543894246987373,3.5690798672062054) node[anchor=north west] {{\scriptsize $Q_N$}};
				\draw (6.235396220991067,1.6334930607939393) node[anchor=north west] {{\scriptsize $\mathcal{T}_2$}};
				\draw (6.19988086674497,2.5568922711924515) node[anchor=north west] {{\scriptsize $\mathcal{T}_1$}};
				\draw (11.029969044214162,3.6401105756983987) node[anchor=north west] {{\scriptsize $S_m$}};
				\draw (11.029969044214162,2.698953688176838) node[anchor=north west] {{\scriptsize $S'_m$}};
				\draw (11.012211367091115,1.8998582176396641) node[anchor=north west] {{\scriptsize $\Gamma_L$}};
				\draw (10.94118065859892,0.9764590072411519) node[anchor=north west] {{\scriptsize $\Gamma_R$}};
				\draw (-3.2472033627168346,1.9353735718857608) node[anchor=north west] {{\scriptsize $q_m$}};
				\draw (-4.223875604484502,1.1007627471024901) node[anchor=north west] {{\scriptsize $\tau^{-1}(q_m)$}};
				\draw (-3.2472033627168346,2.3438001457158717) node[anchor=north west] {{\scriptsize $z_0$}};
				\draw (-4.188360250238405,1.4559162895634563) node[anchor=north west] {{\scriptsize $\tau^{-1}(z_0)$}};
				\draw [line width=0.8pt] (10.22,3.4)-- (10.885579083238898,1.9340960721973857);
				\draw [line width=0.8pt] (10.885579083238898,1.9340960721973857)-- (10.04,-0.14);
				\draw (7.567222005219705,0.7811245588876204) node[anchor=north west] {{\scriptsize $e$}};
				\draw (9.680385582862478,0.639063141903234) node[anchor=north west] {{\scriptsize $e'$}};
				\begin{scriptsize}
				\draw [fill=zzttqq] (-2.3940584595495094,1.0173356113559526) circle (1.0pt);
				\draw [fill=zzttqq] (-2.396863722302452,0.6975356575204523) circle (1.0pt);
				\draw [fill=zzttqq] (-2.386142956066785,1.9197030083865507) circle (1.0pt);
				\draw [fill=zzttqq] (-2.3889482188197273,1.5999030545510504) circle (1.0pt);
				\end{scriptsize}
				\end{tikzpicture}
			\caption{If $b \neq b'$, then $S_m$ and $S'_m$ exit $Q_N$ by a different edge from $e'$.}
			\label{fig:splitting vertex case two}
		\end{subfigure}
		\caption{}
	\end{figure}

	The following lemma is another observation regarding the vertices on $\Gamma_L$ and $\Gamma_R$. Recall that we have assumed that the holes of $Q$ have non-zero minimal diameters.

	\begin{lemma}\label{lem:blocking vertices on parallel trajectories}
		Let $v \in Q^\infty$ be a vertex lying on $\Gamma_L$. Then the following statements hold.
		\begin{enumerate}
			\item If $v$ is an end vertex of a reflecting edge, then $v$ blocks $\Gamma_L$ from the left.
			\item If $v$ is not an end vertex of any reflecting edge, then there exists a vertex $u \in Q^\infty$ blocking $\Gamma_L$ from left or from right and $v$ is either euqal to $u$ or connected to $u$ via a sequence of edges overlapping $\Gamma_L$.
		\end{enumerate}
		The analoguous statements for $v$ lying on $\Gamma_R$ also hold.
	\end{lemma}
	\begin{proof}
		Suppose $v$ lies on $\Gamma_L$ and $v$ is an end vertex of some reflecting edge $e$. Since $\Gamma_R$ intersects $e$ on the right side of $\Gamma_L$ and $\Gamma_R$ has a nonzero parallel separation from $\Gamma_L$, the edge $e$ cannot overlap $\Gamma_L$ or lie on the left side of $\Gamma_L$. Consequently, the interior of $e$ lies on the right side of $\Gamma_L$ and thus $v$ blocks $\Gamma_L$ from the left by Definition \ref{def:blocking vertices}.

		Suppose $v$ is not an end vertex of any reflecting edge. If $v$ does not block $\Gamma_L$ either from left or from right, then all edges containing $v$ must overlap $\Gamma_L$. Since $Q$ has nonzero diameter and all the holes of $Q$ have nonzero diameters, the connected component of $\partial Q$ containing $v$ is not contained in $\Gamma_L$. Thus, there exists a vertex $u$ connected to $v$ via a sequence of edges lying in $\Gamma_L$ such that $u$ is the end vertex of an edge $e'$ not overlapping $\Gamma_L$. Since $e'$ lies on either the left side or the right side of $\Gamma_L$, we see that $u$ blocks $\Gamma_L$ from left or from right.
	\end{proof}

	\paragraph{Existence of uniformly recurrent vertices}

	Recall that Corollary \ref{cor:rule out periodic omega} and the non-periodicity of the edge coding of $p$ and $q$ imply that the sequence $\omega$ is non-periodic. 
	The main objective of this subsection is to establish a useful consequence of Theorem \ref{thm:uniform_recurrence}: the existence of certain \textit{uniformly recurrent vertices} near the generalised trajectories (Corollary \ref{cor:existence of uniform recurrent vertices}).

	For convenience, we define the following subsets of $Q^\infty$. Let $\eta > 0$.
	\begin{itemize}
		\item The set $\CL_L(\eta)$ is the set of vertices in $Q^\infty$ which either lies in the open left $\eta$-strip of $\Gamma_L$ or blocks $\Gamma_L$ from the left.
		\item The set $\CR_R(\eta)$ is the set of vertices in $Q^\infty$ which either lies in the open right $\eta$-strip of $\Gamma_R$ or blocks $\Gamma_R$ from the right.
		\item The set $\CR_L(\eta)$ is the set of vertices in $Q^\infty$ which either lies in the open right $\eta$-strip of $\Gamma_L$ or blocks $\Gamma_L$ from the right.
		\item The set $\CL_R(\eta)$ is the set of vertices in $Q^\infty$ which either lies in the open left $\eta$-strip of $\Gamma_R$ or blocks $\Gamma_R$ from the left.
	\end{itemize}
	These definitions are motivated by our results in Lemma \ref{lem:blocking vertices make V coding different}: for example, if $\CL_L(\eta)$ is empty, then by Lemma \ref{lem:blocking vertices make V coding different} there exists a physical trajectory in the open left $\eta$-strip of $\Gamma_L$ having the same edge coding as $\Gamma_L$.

	In Lemma \ref{lem:infinitely many blocking vertices on either side} and Proposition \ref{prop:infinite neighbouring vertices} below, we exploit the fact that $L$ is $\epsilon/4$ away from the supremum of parallel separations between non-periodic parallel trajectories. The basic idea is as follows. The choice of $\epsilon$ ensures that no pairs of trajectories with parallel separations $\geq L + \epsilon /4$ have the same edge coding as $\Gamma_L$. Thus, there must be vertices in the sets $\CL_L(\eta)$, $\CR_R(\eta)$, $\CL_R(\eta)$ and $\CR_L(\eta)$ for some small $\eta > 0$ to prevent the existence of such parallel trajectories.

	Let us put $\epsilon' = \epsilon / 2$ and let $\CL_L = \CL_L(\epsilon')$, $\CR_R = \CR_R(\epsilon')$, $\CL_R = \CL_R(\epsilon')$ and $\CR_L = \CR_L(\epsilon')$.

	\begin{lemma}\label{lem:infinitely many blocking vertices on either side}
		$\CL_L \cup \CR_R$ contains infinitely many vertices.
	\end{lemma}
	\begin{proof}
		Suppose not, then $\CL_L \cup \CR_R$ is contained in $Q_0^M$ for a sufficiently large $M$ as illustrated in Figure \ref{fig:extending Gamma outwards}. By only considering the parts of $\Gamma_L$ and $\Gamma_R$ after the $M$-th reflection, we may assume $\CL_L \cup \CR_R$ is empty. Then Lemma \ref{lem:blocking vertices make V coding different} implies that there are physical trajectories $l_1$ and $l_2$ parallel to $\Gamma_L$ and $\Gamma_R$ with the same edge coding and the parallel separtion between $l_1$ and $l_2$ can be made arbitrarily close to $L + 2 \epsilon'$. By the choice of $\epsilon$ the edge coding of $l_1$ and $l_2$ would be periodic, which, together with Theorem \ref{thm:periodic coding implies periodic trajectory} extended to generalised trajectories, contradicts the nonperiodicity of $\omega$.
	\end{proof}

	\begin{figure}[]
		\centering
		\begin{tikzpicture}[line cap=round,line join=round,>=triangle 45,x=1.0cm,y=1.0cm]
			\clip(-4.594991454913543,-0.5634597914255917) rectangle (9.210366382341675,2.813410029887159);
			\fill[line width=2.pt,color=zzttqq,fill=zzttqq,fill opacity=0.10000000149011612] (-2.404459585003525,1.5445559593445757) -- (-2.98,1.82) -- (-3.229129578852706,-0.21006104884517038) -- (-1.3867817202534716,-0.4206150898279399) -- (-1.26,2.1) -- cycle;
			\fill[line width=2.pt,color=zzttqq,fill=zzttqq,fill opacity=0.10000000149011612] (3.252771311896054,1.6667628093996147) -- (3.8158801651101726,1.966803820361086) -- (4.1524427255278855,-0.050605174370213724) -- (2.320905981949846,-0.3405220472543383) -- (2.085393249764743,2.1722667995526215) -- cycle;
			\fill[line width=2.pt,color=zzttqq,fill=zzttqq,fill opacity=0.10000000149011612] (4.4458932566727585,1.8658102894462143) -- (3.8158801651101717,1.9668038203610858) -- (4.152442725527885,-0.05060517437021386) -- (5.978903404165862,0.26973843122954655) -- (5.385951675078212,2.722896045943837) -- cycle;
			\fill[line width=2.pt,color=zzttqq,fill=zzttqq,pattern=north east lines,pattern color=zzttqq] (-3.062869610979888,1.144727436388889) -- (-3.025361121653415,1.4503696790055975) -- (5.885096460606248,1.7939523130883641) -- (5.90576869273214,1.4905534889880334) -- cycle;
			\fill[line width=2.pt,color=zzttqq,fill=zzttqq,pattern=north east lines,pattern color=zzttqq] (-3.1475730911548214,0.45451137855809504) -- (-3.1781176125571533,0.2056158301449067) -- (6.010215064559455,0.5599131841122088) -- (6.002147178250808,0.8073198580622013) -- cycle;
			\draw [line width=0.8pt,color=zzttqq] (-2.404459585003525,1.5445559593445757)-- (-2.98,1.82);
			\draw [line width=0.8pt,color=zzttqq] (-2.98,1.82)-- (-3.229129578852706,-0.21006104884517038);
			\draw [line width=0.8pt,color=zzttqq] (-3.229129578852706,-0.21006104884517038)-- (-1.3867817202534716,-0.4206150898279399);
			\draw [line width=0.8pt,color=zzttqq] (-1.3867817202534716,-0.4206150898279399)-- (-1.26,2.1);
			\draw [line width=0.8pt,color=zzttqq] (-1.26,2.1)-- (-2.404459585003525,1.5445559593445757);
			\draw [line width=0.8pt,color=zzttqq] (3.252771311896054,1.6667628093996147)-- (3.8158801651101726,1.966803820361086);
			\draw [line width=0.8pt,color=zzttqq] (3.8158801651101726,1.966803820361086)-- (4.1524427255278855,-0.050605174370213724);
			\draw [line width=0.8pt,color=zzttqq] (4.1524427255278855,-0.050605174370213724)-- (2.320905981949846,-0.3405220472543383);
			\draw [line width=0.8pt,color=zzttqq] (2.320905981949846,-0.3405220472543383)-- (2.085393249764743,2.1722667995526215);
			\draw [line width=0.8pt,color=zzttqq] (2.085393249764743,2.1722667995526215)-- (3.252771311896054,1.6667628093996147);
			\draw [line width=0.8pt,color=zzttqq] (4.4458932566727585,1.8658102894462143)-- (3.8158801651101717,1.9668038203610858);
			\draw [line width=0.8pt,color=zzttqq] (3.8158801651101717,1.9668038203610858)-- (4.152442725527885,-0.05060517437021386);
			\draw [line width=0.8pt,color=zzttqq] (4.152442725527885,-0.05060517437021386)-- (5.978903404165862,0.26973843122954655);
			\draw [line width=0.8pt,color=zzttqq] (5.978903404165862,0.26973843122954655)-- (5.385951675078212,2.722896045943837);
			\draw [line width=0.8pt,color=zzttqq] (5.385951675078212,2.722896045943837)-- (4.4458932566727585,1.8658102894462143);
			\draw [line width=0.8pt,dotted] (3.8632091153372787,1.6231066385329863)-- (8.506228268658116,1.8021390547773333);
			\draw [line width=0.8pt,dotted] (4.056133124900351,0.5466897402626639)-- (8.567004815814132,0.7206266187299649);
			\draw [line width=0.8pt] (-3.147573091154821,0.45451137855809504)-- (8.744871002210084,0.9130778696230968);
			\draw [line width=0.8pt] (-3.062869610979888,1.144727436388889)-- (8.530615283525652,1.591766215427981);
			\draw [line width=0.8pt,dotted] (6.657770884041835,0.6270075461624068)-- (6.612959827447407,1.7891356032652521);
			\draw [->,line width=0.8pt, -{Latex[width=1.5mm]}] (6.596658457073745,2.2118946569496396) -- (6.612959827447407,1.7891356032652521);
			\draw [->,line width=0.8pt, -{Latex[width=1.5mm]}] (6.675382149768395,0.17027772148617082) -- (6.657770884041835,0.6270075461624068);
			\draw (6.739254777017316,2.36204624188991) node[anchor=north west] {$l_1$};
			\draw (6.822840663683473,0.7237628632332291) node[anchor=north west] {$l_2$};
			\draw (-3.809284120251669,1.3432076165609584) node[anchor=north west] {$\Gamma_L$};
			\draw (-3.959738716250751,0.7903285085667663) node[anchor=north west] {$\Gamma_R$};
			\draw (6.672386067684391,1.559621729894801) node[anchor=north west] {$\approx L + 2 \epsilon'$};
			\draw (-2.1375663869285333,0.33926778456890605) node[anchor=north west] {$Q_0$};
			\draw (2.493091734376552,0.38941931656860035) node[anchor=north west] {$Q_M$};
			% \draw (-2.538778642926086,2.4957836605557615) node[anchor=north west] {$Q^\infty$};
			\draw (0.1526869077241624,1.2754297152298666) node[anchor=north west] {$\cdots$};
			\begin{scriptsize}
			\draw [fill=ududff] (-2.0609290915037843,1.3142589027853508) circle (1.0pt);
			\draw [fill=ududff] (-0.509473216158598,1.2462710679523188) circle (1.0pt);
			\draw [fill=ududff] (-0.2813730050939308,0.3865087339393433) circle (1.0pt);
			\draw [fill=ududff] (-2.2398017533421677,0.3416388040391424) circle (1.0pt);
			\draw [fill=ududff] (1.4803584465441246,1.4342371832894487) circle (1.0pt);
			\draw [fill=ududff] (2.9582221501667334,0.5105723685253205) circle (1.0pt);
			\draw [fill=ududff] (2.4596573797100847,1.476035843228645) circle (1.0pt);
			\end{scriptsize}
			\end{tikzpicture}
		\caption{If $\CL_L \cup \CR_R$ contains only finitely many vertices, it will be contained in $Q_0^M$ for sufficiently large $M$. In $Q_{M+1}^\infty$, there are physical trajectories $l_1$ and $l_2$ parallel to $\Gamma_L$ and $\Gamma_R$ with parallel separtion arbitrarily close to $L + 2 \epsilon'$.}
		\label{fig:extending Gamma outwards}
	\end{figure}

	\begin{proposition}\label{prop:infinite neighbouring vertices}
		At least one of the following statements holds:
		\begin{enumerate}
			\item Both $\CL_L$ and $\CR_R$ contain infinitely many vertices.
			\item Both $\CL_L$ and $\CR_L$ contain infinitely many vertices.
			\item Both $\CL_R$ and $\CR_R$ contain infinitely many vertices.
		\end{enumerate}
		The sets involved in these three cases are illustrated in Figure \ref{fig:infinite neighbouring vertices}.
	\end{proposition}

	\begin{figure}[]
		\centering
		\begin{subfigure}[t]{0.3\textwidth}
			\begin{tikzpicture}[line cap=round,line join=round,>=triangle 45,x=0.9cm,y=0.9cm]
				\clip(-0.32074249182768655,1.5090809764086726) rectangle (5.559270454516181,4.734273262674883);
				\fill[line width=2.pt,color=zzttqq,fill=zzttqq,fill opacity=0.10000000149011612] (-0.18,2.44) -- (0.54,3.86) -- (1.64,3.94) -- (2.2,2.32) -- (1.3,1.66) -- cycle;
				\fill[line width=2.pt,color=zzttqq,fill=zzttqq,fill opacity=0.10000000149011612] (3.9978134785568424,3.8841824370319955) -- (2.5545786249149085,4.556397549353302) -- (1.64,3.94) -- (2.2,2.32) -- (3.315460857726346,2.356702518720218) -- cycle;
				\fill[line width=2.pt,color=zzttqq,fill=zzttqq,pattern=north east lines,pattern color=zzttqq] (0.4606707916641302,3.7035451724487016) -- (0.3093545841881016,3.405115985482089) -- (4.525838409877995,3.476455176591337) -- (4.552043141847935,3.772767587630408) -- cycle;
				\fill[line width=2.pt,color=zzttqq,fill=zzttqq,pattern=north west lines,pattern color=zzttqq] (0.010918344705208584,2.816533402057495) -- (-0.13510309522006095,2.5285466733159914) -- (4.485628216840791,2.606725375704743) -- (4.477542824205154,2.8921047526256793) -- cycle;
				\draw [line width=0.8pt,color=zzttqq] (-0.18,2.44)-- (0.54,3.86);
				\draw [line width=0.8pt,color=zzttqq] (0.54,3.86)-- (1.64,3.94);
				\draw [line width=0.8pt,color=zzttqq] (1.64,3.94)-- (2.2,2.32);
				\draw [line width=0.8pt,color=zzttqq] (2.2,2.32)-- (1.3,1.66);
				\draw [line width=0.8pt,color=zzttqq] (1.3,1.66)-- (-0.18,2.44);
				\draw [line width=0.8pt,color=zzttqq] (3.9978134785568424,3.8841824370319955)-- (2.5545786249149085,4.556397549353302);
				\draw [line width=0.8pt,color=zzttqq] (2.5545786249149085,4.556397549353302)-- (1.64,3.94);
				\draw [line width=0.8pt,color=zzttqq] (1.64,3.94)-- (2.2,2.32);
				\draw [line width=0.8pt,color=zzttqq] (2.2,2.32)-- (3.315460857726346,2.356702518720218);
				\draw [line width=0.8pt,color=zzttqq] (3.315460857726346,2.356702518720218)-- (3.9978134785568424,3.8841824370319955);
				\draw [line width=0.8pt,color=zzttqq] (0.3093545841881016,3.405115985482089)-- (4.525838409877995,3.476455176591337);
				\draw [line width=0.8pt,color=zzttqq] (0.010918344705208584,2.816533402057495)-- (-0.13510309522006095,2.5285466733159914);
				\draw [->,line width=0.8pt, -{Latex[width=1.5mm]}] (0.3093545841881016,3.405115985482089) -- (1.1153535162686017,3.4187527772324127);
				\draw [->,line width=0.8pt, -{Latex[width=1.5mm]}] (0.010918344705208584,2.816533402057495) -- (0.9077804573809618,2.831707518762271);
				\draw [line width=0.8pt,dotted] (0.3093545841881016,3.405115985482089)-- (5.2043415082175235,3.4879348511736867);
				\draw [line width=0.8pt,dotted] (0.010918344705208584,2.816533402057495)-- (5.2044857155926145,2.904403985825208);
				\draw (4.667416639021344,3.8942714131971883) node[anchor=north west] {{\scriptsize $\Gamma_L$}};
				\draw (4.636305459411059,3.3342701802120582) node[anchor=north west] {{\scriptsize $\Gamma_R$}};
				\end{tikzpicture}
			\caption{$\CL_L$ and $\CR_R$.}
			\label{fig:infinite neighbouring vertices 1}
		\end{subfigure}
		\begin{subfigure}[t]{0.3\textwidth}
			\begin{tikzpicture}[line cap=round,line join=round,>=triangle 45,x=0.9cm,y=0.9cm]
				\clip(-0.32074249182768655,1.5090809764086726) rectangle (5.559270454516181,4.734273262674883);
				\fill[line width=2.pt,color=zzttqq,fill=zzttqq,fill opacity=0.10000000149011612] (-0.18,2.44) -- (0.54,3.86) -- (1.64,3.94) -- (2.2,2.32) -- (1.3,1.66) -- cycle;
				\fill[line width=2.pt,color=zzttqq,fill=zzttqq,fill opacity=0.10000000149011612] (3.9978134785568424,3.8841824370319955) -- (2.5545786249149085,4.556397549353302) -- (1.64,3.94) -- (2.2,2.32) -- (3.315460857726346,2.356702518720218) -- cycle;
				\fill[line width=2.pt,color=zzttqq,fill=zzttqq,pattern=north east lines,pattern color=zzttqq] (0.4606707916641302,3.7035451724487016) -- (0.3093545841881016,3.405115985482089) -- (4.525838409877995,3.476455176591337) -- (4.552043141847935,3.772767587630408) -- cycle;
				\fill[line width=2.pt,color=zzttqq,fill=zzttqq,pattern=north west lines,pattern color=zzttqq] (0.3093545841881013,3.4051159854820887) -- (0.11208016314263462,3.0160469884201966) -- (4.511774365579895,3.090485937673978) -- (4.480286992965737,3.475684486753372) -- cycle;
				\draw [line width=0.8pt,color=zzttqq] (-0.18,2.44)-- (0.54,3.86);
				\draw [line width=0.8pt,color=zzttqq] (0.54,3.86)-- (1.64,3.94);
				\draw [line width=0.8pt,color=zzttqq] (1.64,3.94)-- (2.2,2.32);
				\draw [line width=0.8pt,color=zzttqq] (2.2,2.32)-- (1.3,1.66);
				\draw [line width=0.8pt,color=zzttqq] (1.3,1.66)-- (-0.18,2.44);
				\draw [line width=0.8pt,color=zzttqq] (3.9978134785568424,3.8841824370319955)-- (2.5545786249149085,4.556397549353302);
				\draw [line width=0.8pt,color=zzttqq] (2.5545786249149085,4.556397549353302)-- (1.64,3.94);
				\draw [line width=0.8pt,color=zzttqq] (2.2,2.32)-- (3.315460857726346,2.356702518720218);
				\draw [line width=0.8pt,color=zzttqq] (3.315460857726346,2.356702518720218)-- (3.9978134785568424,3.8841824370319955);
				\draw [->,line width=0.8pt, -{Latex[width=1.5mm]}] (0.3093545841881016,3.405115985482089) -- (1.1153535162686017,3.4187527772324127);
				\draw [->,line width=0.8pt, -{Latex[width=1.5mm]}] (0.010918344705208584,2.816533402057495) -- (0.9077804573809618,2.831707518762271);
				\draw [line width=0.8pt,dotted] (0.3093545841881016,3.405115985482089)-- (5.2043415082175235,3.4879348511736867);
				\draw [line width=0.8pt,dotted] (0.010918344705208584,2.816533402057495)-- (5.2044857155926145,2.904403985825208);
				\draw (4.729638998241914,3.88390101999376) node[anchor=north west] {{\scriptsize $\Gamma_L$}};
				\draw (4.646675852614487,3.32389978700863) node[anchor=north west] {{\scriptsize $\Gamma_R$}};
				\end{tikzpicture}
			\caption{$\CL_L$ and $\CR_L$.}
			\label{fig:infinite neighbouring vertices 2}
		\end{subfigure}
		\begin{subfigure}[t]{0.3\textwidth}
			\begin{tikzpicture}[line cap=round,line join=round,>=triangle 45,x=0.9cm,y=0.9cm]
				\clip(-0.31979972880919333,1.499841898827437) rectangle (5.560213217534675,4.725034185093646);
				\fill[line width=2.pt,color=zzttqq,fill=zzttqq,fill opacity=0.10000000149011612] (-0.18,2.44) -- (0.54,3.86) -- (1.64,3.94) -- (2.2,2.32) -- (1.3,1.66) -- cycle;
				\fill[line width=2.pt,color=zzttqq,fill=zzttqq,fill opacity=0.10000000149011612] (3.9978134785568424,3.8841824370319955) -- (2.5545786249149085,4.556397549353302) -- (1.64,3.94) -- (2.2,2.32) -- (3.315460857726346,2.356702518720218) -- cycle;
				\fill[line width=2.pt,color=zzttqq,fill=zzttqq,pattern=north west lines,pattern color=zzttqq] (0.010918344705208584,2.816533402057495) -- (-0.13510309522006095,2.5285466733159914) -- (4.485628216840791,2.606725375704743) -- (4.477542824205154,2.8921047526256793) -- cycle;
				\fill[line width=0.4pt,color=zzttqq,fill=zzttqq,pattern=north east lines,pattern color=zzttqq] (0.005046086458227239,2.804952003848171) -- (0.17997442344975037,3.14994955735923) -- (4.398729178750701,3.2213271705965387) -- (4.388690635835577,2.8906014518877634) -- cycle;
				\draw [line width=0.8pt,color=zzttqq] (-0.18,2.44)-- (0.54,3.86);
				\draw [line width=0.8pt,color=zzttqq] (0.54,3.86)-- (1.64,3.94);
				\draw [line width=0.8pt,color=zzttqq] (1.64,3.94)-- (2.2,2.32);
				\draw [line width=0.8pt,color=zzttqq] (2.2,2.32)-- (1.3,1.66);
				\draw [line width=0.8pt,color=zzttqq] (1.3,1.66)-- (-0.18,2.44);
				\draw [line width=0.8pt,color=zzttqq] (3.9978134785568424,3.8841824370319955)-- (2.5545786249149085,4.556397549353302);
				\draw [line width=0.8pt,color=zzttqq] (2.5545786249149085,4.556397549353302)-- (1.64,3.94);
				\draw [line width=0.8pt,color=zzttqq] (2.2,2.32)-- (3.315460857726346,2.356702518720218);
				\draw [line width=0.8pt,color=zzttqq] (3.315460857726346,2.356702518720218)-- (3.9978134785568424,3.8841824370319955);
				\draw [->,line width=0.8pt, -{Latex[width=1.5mm]}] (0.3093545841881016,3.405115985482089) -- (1.1153535162686017,3.4187527772324127);
				\draw [->,line width=0.8pt, -{Latex[width=1.5mm]}] (0.010918344705208584,2.816533402057495) -- (0.9077804573809618,2.831707518762271);
				\draw [line width=0.8pt,dotted] (0.3093545841881016,3.405115985482089)-- (5.2043415082175235,3.4879348511736867);
				\draw [line width=0.8pt,dotted] (0.010918344705208584,2.816533402057495)-- (5.2044857155926145,2.904403985825208);
				\draw (4.7305817612604075,3.8850323356159513) node[anchor=north west] {{\scriptsize $\Gamma_L$}};
				\draw (4.647618615632981,3.3250311026308217) node[anchor=north west] {{\scriptsize $\Gamma_R$}};
				\end{tikzpicture}
			\caption{$\CL_R$ and $\CR_R$.}
			\label{fig:infinite neighbouring vertices 3}
		\end{subfigure}
		\caption{}
		\label{fig:infinite neighbouring vertices}
	\end{figure}

	\begin{proof}
		Suppose (1) does not hold. By Lemma \ref{lem:infinitely many blocking vertices on either side}, either $\CL_L$ or $\CR_R$ is finite but not both.
		We first assume $\CR_R$ is finite and show that $\CR_L$ is infinite, thereby deducing (2). Assume for a contradiction that $\CR_L$ is finite. Then by starting $\Gamma_L$ and $\Gamma_R$ at a later time, we can assume that $\CR_R \cup \CR_L$ is empty. By Lemma \ref{lem:blocking vertices make V coding different}, there exists a physical trajectory $l$ in the open right $\epsilon'$-strip of $\Gamma_R$ and a physical trajectory $l'$ in the open right $\epsilon'$-strip of $\Gamma_L$ such that $l$ and $l'$ have the same edge coding as $\Gamma_L$ and $\Gamma_R$. Moreover, the parallel separation between $l$ and $l'$ can be chosen to be arbitrarily close to $L + \frac{\epsilon}{2}$. This contradicts the choice of $\epsilon$ in the beginning of the proof and the nonperiodicity of $\omega$. Therefore, $\CR_L$ is infinite and we have (2). 
		A similar argument shows that, if $\CL_L$ is finite, then we will have (3).
	\end{proof}

	For convenience, we make the following definition.

	\begin{definition}\label{def:uniformly recurrent vertices}
		A subset $E$ of the unfolding $Q^\infty$ is said to be \textit{uniformly recurrent} if there exists an increasing sequence $i_1 < i_2 < i_3 < \cdots$ with uniformly bounded gaps $i_{n+1} - i_n$ such that $Q^{i_n} \cap E \neq \varnothing$ for all $n \geq 1$.
	\end{definition}

	\begin{corollary}\label{cor:existence of uniform recurrent vertices}
		At least one of the following statements holds:
		\begin{enumerate}
			\item Both $\CL_L(\epsilon)$ and $\CR_R(\epsilon)$ are uniformly recurrent.
			\item Both $\CL_L(\epsilon)$ and $\CR_L(\epsilon)$ are uniformly recurrent.
			\item Both $\CL_R(\epsilon)$ and $\CR_R(\epsilon)$ are uniformly recurrent.
		\end{enumerate}
		In particular, both $\CL_L(\epsilon) \cup \CL_R(\epsilon)$ and $\CR_L(\epsilon) \cup \CR_R(\epsilon)$ are uniformly recurrent.
	\end{corollary}
	\begin{proof}
		If (1) in Proposition \ref{prop:infinite neighbouring vertices} holds, then we can pick $v_1, v_2$ and $v_3$ in $Q^\infty$, appearing in that order, such that $v_1, v_3 \in \CL_L(\epsilon/2)$ and $v_2 \in \CR_R(\epsilon / 2)$. 
		Let $u$ be the closest point on the segment $v_1v_3$ to $v_2$. By choosing $v_1$ and $v_3$ sufficiently far apart, we can assume the line segment $uv_2$ is approximately perpendicular to $\Gamma_L$ and $\Gamma_R$ so that the length of $uv_2$ is less than $\epsilon + L$. 
		Let $Q_0^M$ be the finite corridor containing these three vertices as shown in Figure \ref{fig:recurring finite corridors}. 
		By the uniform recurrence property of $\omega$, the trajectories $\Gamma_L$ and $\Gamma_R$ will pass through copies of $Q_0^M$ infinitely many times with bounded time interval. We will deduce case (1) by showing that each copy of $Q_0^M$ contains a vertex in $\CR_R(\epsilon)$ and a vertex in $\CL_L(\epsilon)$.
		Let us denote one of these copies of $Q_0^M$ by $\wt{Q_0^M}$ and denote the images of $v_i$ in $\wt{Q_0^M}$ by $\wt{v}_i$ for $i = 1,2,3$.

		By the $\mathcal{B}$-coding of $\Gamma_L$ and $\Gamma_R$, the line segment $\wt{v}_1\wt{v}_3$ and the vertex $\wt{v}_2$ will lie on two different sides of the parallel trajectories.\footnote{Note that the vertex appearing on the left of $\Gamma_L$ need not always be $\wt{v}_1$ or $\wt{v}_3$. It could also be $\wt{v}_2$ since the orientation of $Q$ is inverted after each reflection.} 
		Due to the length of $uv_2$, the vertex $\wt{v}_2$ will lie at most $\epsilon$ away from one of the two trajectories and $\wt{v}_1$ or $\wt{v}_3$ will lie at most $\epsilon$ away from the other trajectory.
		If $\wt{v}_i$ does not lie on $\Gamma_L$ or $\Gamma_R$, then we have $\wt{v}_i \in \CR_R(\epsilon)$ or $\wt{v}_i \in \CL_L(\epsilon)$.
		If $\wt{v}_i$ lies on $\Gamma_L$, then by Lemma \ref{lem:blocking vertices on parallel trajectories} there exists a vertex $\wt{w}$ in $\Gamma_L \cap \wt{Q_0^M}$ which blocks $\Gamma_L$ either from left or from right, and $\wt{w}$ is either equal to $\wt{v}_i$ or connected to $\wt{v}_i$ via a sequence of edges $e_1, e_2, \ldots$ overlapping $\Gamma_L$. 
		Clearly, these edges $e_1, e_2, \ldots$ are non-reflecting edges. We can show $\wt{w} \in \CL_L(\epsilon)$ by proving that $\wt{w}$ does not block $\Gamma_L$ from the right as follows.

		We first consider the case $v_i \in \CL_L(\epsilon)$. Choose a physical trajectory $S$ in $Q_0^M$ with the same $V$-coding as $\Gamma_L \cap Q_0^M$. 
		It follows from Lemma \ref{lem:blocking vertices make V coding different} that $v_i$ must lie on the left side of $S$.
		Let $w \in Q_0^M$ be a vertex whose image in $\wt{Q_0^M}$ is $\wt{w}$. Since $S$ does not interesect the edges connecting $w$ and $v_i$, the vertex $w$ is also on the left side of $S$. 
		Consider the image $\wt{S}$ of $S$ in $\wt{Q_0^M}$. 
		Since $\wt{w}$ lies on the left side of $\wt{S}$ and $\wt{S}$ has the same $V$-coding as $\Gamma_L \cap \wt{Q_0^M}$, Lemma \ref{lem:blocking vertices make V coding different} implies that $\wt{w}$ does not block $\Gamma_L$ from the right.
		The argument for case $v_i \in \CR_R(\epsilon)$ is similar except that $S$ is taken to have the same $V$-coding as $\Gamma_R \cap Q_0^M$. In this case, both $v_i$ and $w$ lie on the right side of $S$, and $\wt{w}$ lies on the left side of $\wt{S}$. The same conclusion follows.

		A similar argument applies if $\wt{v}_i$ lies on $\Gamma_R$ and we can show that a vertex $\wt{w} \in \wt{Q_0^M}$ lies in $\CR_R(\epsilon)$. In conclusion, each copy $\wt{Q_0^M}$ contains at least one vertex in $\CR_R(\epsilon)$ and one vertex in $\CL_L(\epsilon)$. By the uniform recurrence property, we have case (1).

		\begin{figure}[]
			\centering
			\begin{tikzpicture}[line cap=round,line join=round,>=triangle 45,x=2.0cm,y=2.0cm]
				\clip(-2.9038338645920083,-0.6156239478301437) rectangle (4.350533562567108,0.8333677302275667);
				\fill[line width=2.pt,color=uuuuuu,fill=uuuuuu,fill opacity=0.10000000149011612] (-2.7425921831777074,0.4771300359574781) -- (-1.0757989208734393,0.5910103209596329) -- (-1.1896792058755943,-0.24756268678350726) -- (-2.680475664085623,-0.2682681931475354) -- cycle;
				\fill[line width=2.pt,color=uuuuuu,fill=uuuuuu,fill opacity=0.10000000149011612] (-0.027651306492101346,0.46931937719128514) -- (1.622873983406778,0.7280369181105291) -- (1.5825135006084454,-0.11727038698971248) -- (0.09919457679178412,-0.26782857525173487) -- cycle;
				\fill[line width=2.pt,color=uuuuuu,fill=uuuuuu,fill opacity=0.10000000149011612] (2.2026651086700286,-0.02340314196801785) -- (3.8724789389615184,-0.07716438025673583) -- (3.728463532404968,0.7567618582691302) -- (2.2378888289081287,0.7237489653231244) -- cycle;
				\draw [line width=1.2pt] (-2.7425921831777074,0.4771300359574781)-- (-2.680475664085623,-0.2682681931475354);
				\draw [line width=1.2pt] (-1.0757989208734393,0.5910103209596329)-- (-1.1896792058755943,-0.24756268678350726);
				\draw [line width=0.8pt,dotted,domain=-2.72154816717285:4.350533562567108] plot(\x,{(--0.20680173159777854--0.021605076986145527*\x)/0.6589548480774363});
				\draw [line width=0.8pt,dotted,domain=-2.701491470953744:4.350533562567108] plot(\x,{(--0.0657909391972154--0.02975488744300598*\x)/0.9075240670116831});
				\draw (-2.188747062433652,-0.15458114117541769) node[anchor=north west] {$Q_0^M$};
				\draw (3.861263645300864,0.852185803968576) node[anchor=north west] {$\Gamma_L$};
				\draw (3.97417208774692,0.23118937051527147) node[anchor=north west] {$\Gamma_R$};
				\draw (-2.602744684735858,0.560505660982933) node[anchor=north west] {$v_1$};
				\draw (-1.3795698915702486,0.6640050665584837) node[anchor=north west] {$v_3$};
				\draw (-1.7088861820379126,0.02419055936416997) node[anchor=north west] {$v_2$};
				\draw (0.15410311832201598,0.6640050665584837) node[anchor=north west] {$\widetilde{v}_1$};
				\draw (0.8785989573508771,0.08064478058719765) node[anchor=north west] {$\widetilde{v}_2$};
				\draw (0.8974170310918865,0.729868324652016) node[anchor=north west] {$\widetilde{v}_3$};
				\draw (-0.5986198313183593,0.654596029687979) node[anchor=north west] {$\cdots$};
				\draw (1.8289116812718509,0.6922321771699975) node[anchor=north west] {$\cdots$};
				\draw [->,line width=0.8pt,dotted] (-1.6031467745731205,-0.3663952926505386) -- (-1.2129502079717582,-0.358939307301468);
				\draw [->,line width=0.8pt,dotted] (-2.300281404711223,-0.37971633653852777) -- (-2.672074705887499,-0.3868206671342527);
				\draw [line width=0.8pt,color=uuuuuu] (-1.0757989208734393,0.5910103209596329)-- (-1.1896792058755943,-0.24756268678350726);
				\draw [line width=0.8pt,color=uuuuuu] (-2.680475664085623,-0.2682681931475354)-- (-2.7425921831777074,0.4771300359574781);
				\draw [line width=0.8pt,color=uuuuuu] (1.622873983406778,0.7280369181105291)-- (1.5825135006084454,-0.11727038698971248);
				\draw [line width=0.8pt,color=uuuuuu] (0.09919457679178412,-0.26782857525173487)-- (-0.027651306492101346,0.46931937719128514);
				\draw [line width=0.8pt,color=uuuuuu] (3.8724789389615184,-0.07716438025673583)-- (3.728463532404968,0.7567618582691302);
				\draw [line width=0.8pt,color=uuuuuu] (2.2378888289081287,0.7237489653231244)-- (2.2026651086700286,-0.02340314196801785);
				\draw [line width=0.8pt,dotted] (-2.2228665802085334,0.3341220914319145)-- (-1.4354662715024864,0.3684185702559699);
				\draw [line width=0.8pt,dotted] (-1.7590806308863691,-0.10262414223531019)-- (-1.7789460070852172,0.3534577614428977);
				\draw (-1.878248845706997,0.6922321771699975) node[anchor=north west] {$u$};
				\draw (0.5681007406242223,-0.1451721043049131) node[anchor=north west] {$\widetilde{Q_0^M}$};
				\draw [->,line width=0.8pt,dotted] (1.170557563861104,-0.3031726795820575) -- (1.560754130462466,-0.2957166942329869);
				\draw [->,line width=0.8pt,dotted] (0.4734229337230019,-0.3164937234700467) -- (0.10162963254672658,-0.32359805406577163);
				\begin{scriptsize}
				\draw [fill=ududff] (-2.2228665802085334,0.3341220914319145) circle (1.0pt);
				\draw [fill=ududff] (-1.7590806308863691,-0.10262414223531019) circle (1.0pt);
				\draw [fill=ududff] (-1.4354662715024864,0.3684185702559699) circle (1.0pt);
				\draw [fill=ududff] (0.502001128040547,0.43186208268857457) circle (1.0pt);
				\draw [fill=ududff] (1.2467713914526635,0.4657152764800343) circle (1.0pt);
				\draw [fill=ududff] (0.8659229612987402,-0.008229436600402183) circle (1.0pt);
				\draw [fill=ududff] (2.8632613949948706,0.5164950671672239) circle (1.0pt);
				\draw [fill=ududff] (2.547664692831491,0.031961649130872814) circle (1.0pt);
				\draw [fill=ududff] (3.3758849473926182,0.11478367458698542) circle (1.0pt);
				\draw [fill=uuuuuu] (-2.72154816717285,0.2246018438991946) circle (1.0pt);
				\draw [fill=uuuuuu] (-2.701491470953744,-0.01607851073008079) circle (1.0pt);
				\draw [fill=uuuuuu] (-1.7789460070852172,0.3534577614428977) circle (1.0pt);
				\end{scriptsize}
				\end{tikzpicture}
			\caption{}
			\label{fig:recurring finite corridors}
		\end{figure}

		If (2) in Proposition \ref{prop:infinite neighbouring vertices} holds, then we can choose $M \geq 0$ and vertices $v_1, v_2$ and $v_3$ in $Q_0^M$, appearing successively in that order, such that $v_1, v_3 \in \CL_L(\epsilon/2)$ and $v_2 \in \CR_L(\epsilon / 2)$ and the line segment $uv_2$ defined above has length less than $\epsilon$. Again, let $\wt{Q_0^M}$ be one of the uniformly recurrent copies of $Q_0^M$ in the unfolding $Q^\infty$. Then, due to the length of $uv_2$, the image of $v_2$ in $\wt{Q_0^M}$ will lie at most $\epsilon$-away from one side of $\Gamma_L$ and the image of $v_1$ or $v_3$ in $\wt{Q_0^M}$ will lie at most $\epsilon$-away on the other side of $\Gamma_L$.
		We can deduce using an argument similar to case (1) above that each copy $\wt{Q_0^M}$ intersects both $\CL_L(\epsilon)$ and $\CR_L(\epsilon)$ non-trivially, and thus both $\CL_L(\epsilon)$ and $\CR_L(\epsilon)$ are uniformly recurrent.

		The case (3) is symmetric to (2) and we omit the proof.
	\end{proof}

	\paragraph{Consequences of Lemma \ref{lem:splitting vertices} and Corollary \ref{cor:existence of uniform recurrent vertices}}

	We are now able to finish the proof of the main theorem \ref{thm:main theorem} using Lemma \ref{lem:splitting vertices} and Corollary \ref{cor:existence of uniform recurrent vertices}. 
 
	By Corollary \ref{cor:existence of uniform recurrent vertices}, both $\CL_L \cup \CL_R$ and $\CR_L \cup \CR_R$ are uniformly recurrent.
	Let the time intervals of recurrence in $\CL_L \cup \CL_R$ and in $\CR_L \cup \CR_R$ be both bounded from above by $D > 0$, and let $\text{diam}(Q)$ be the diameter of $Q$. Let $z_0 = (x', \theta')$.
	Take $m > 0$ sufficiently large such that, with $q_m = (x_m, \theta_m)$,
	\[(D + 2 \ \text{diam}(Q))\tan(\theta_m - \theta') + \epsilon < \min\Big(\delta, \frac{L}{\sin(\theta_m - \theta')}\Big). \tag{$\clubsuit$} \label{eq:inequality of Delta non-periodic}\]
	The choice of $m$ is possible as $\epsilon > 0$ has been chosen to satisfy $\epsilon < \min(L /2, \delta / 2)$. Informally, the inequality \eqref{eq:inequality of Delta non-periodic} ensures that $S_m$ and $\Gamma_L$ are approximately parallel such that the distance between $S_m \cap Q_n$ and $\Gamma_L \cap Q_n$ does not increase too fast with $n$.

	Recall that $\beta(m)$ is defined as the $\mathcal{B}$-coding associated with $q_m$ and $\tau\inv(q_m)$. 
	Let $N \geq 1$ be the largest integer such that $\beta(m)_{n} = \omega_{n}$ for $n = 0, 1,2, \ldots, N-1$.
	We re-use notations introduced in Lemma \ref{lem:splitting vertices} and consider the two cases in (3) and (4) of Lemma \ref{lem:splitting vertices} separately. We will show that both cases lead to a contradiction.

	In the case where $b = b'$, there is a hole in $Q_N$ lying between $S_m$ and $\Gamma_L$ or lying between $S'_m$ and $\Gamma_R$. As illustrated by Figure \ref{fig:splitting vertex on a hole}, since the minimal diameter of the hole is strictly greater than $\delta$, there exists some point in $S_m \cap Q_N$ whose distance to $\Gamma_L$ is greater than $\delta$, which is in turn greater than $(D + 2\text{diam}(Q))\tan(\theta_m - \theta') + \epsilon$ by \eqref{eq:inequality of Delta non-periodic}. Thus, the perpendicular distance from some point in $S_m \cap Q_0^{N-1}$ to $\Gamma_L$ must be greater than $D\tan(\theta_m - \theta') + \epsilon$.
	
	Recall that both $\CL_L(\epsilon) \cup \CL_R(\epsilon)$ and $\CR_L(\epsilon) \cup \CR_R(\epsilon)$ are uniformly recurrent by Corollary \ref{cor:existence of uniform recurrent vertices}.
	Suppose $Q_N \cap S_m$ lies on the left side of $\Gamma_L$. Observe that there is an open left $\epsilon$-strip of $\Gamma_L$ of length $\geq D$ lying in $\CT_1$ and an open left $\epsilon$-strip of $\Gamma_R$ of length $\geq D$ lying in $\CT_2$ indicated by the shaded regions in Figure \ref{fig:splitting vertex on a hole}.
	Since $\CL_L(\epsilon) \cup \CL_R(\epsilon)$ is uniformly recurrent, there will be a vertex lying in the closure of the two strips. 
	In particular, this vertex belongs to one of the following four sets 
	\[\CT_1, \quad \CT_2, \quad \ol{\CT_1} \cap \Gamma_L, \quad \ol{\CT_2} \cap \Gamma_R.\]
	However, by (1) of Lemma \ref{lem:splitting vertices}, this vertex cannot lie in $\CT_1$ or $\CT_2$. On the other hand, if the vertex lies in $\ol{\CT_1} \cap \Gamma_L$ or $\ol{\CT_2} \cap \Gamma_R$, then by the definitions of $\CL_L(\epsilon)$ and $\CL_R(\epsilon)$ this vertex blocks $\Gamma_L$ or $\Gamma_R$ from the left. This contradicts (2) of Lemma \ref{lem:splitting vertices}.  
	If $Q_N \cap S_m$ lies on the right side of $\Gamma_L$ instead, then we can deduce a contradiction by applying a similar argument using the uniform recurrence of $\CR_L(\epsilon) \cup \CR_R(\epsilon)$ instead of $\CL_L(\epsilon) \cup \CL_R(\epsilon)$.
	Hence, the case $b = b'$ leads to a contradiction.

	Next, suppose $b \neq b'$. Then $S_m$ and $S'_m$ hit a different edge from $\Gamma_R$ and $\Gamma_L$ as illustrated by Figure \ref{fig:splitting vertex on an edge}. Since the base points of $f^N(q_m)$ and $f^N(\tau\inv(q_m))$ either both lie on the left side of $\Gamma_L$ or both lie on the right side of $\Gamma_R$, the perpendicular distance from some point on $S_m$ to $\Gamma_L \cap Q_N$ must be greater than $\frac{L}{\sin(\theta_m - \theta')}$, which is in turn greater than $(D + 2\text{diam}(Q))\tan(\theta_m - \theta') + \epsilon$ by \eqref{eq:inequality of Delta non-periodic}. Now, we can use exactly the argument in the previous paragraph to deduce a contradiction in the case $b \neq b'$. The proof by contradiction is complete.

	\begin{figure}
		\centering
		\begin{subfigure}[b]{\textwidth}
			\centering
			\begin{tikzpicture}[line cap=round,line join=round,>=triangle 45,x=.9cm,y=.7cm]
				\clip(-1.7513258192576415,-2.5105288841624827) rectangle (12.750619409624429,1.5246417809820763);
				\fill[line width=2.pt,color=zzttqq,fill=zzttqq,pattern=north east lines,pattern color=zzttqq] (1.610399169587111,-0.5756080874501613) -- (1.6069421513243423,-0.11519710307660608) -- (7.246073901106057,-0.07285540814141614) -- (7.25504663530899,-0.5332249774936436) -- cycle;
				\fill[line width=2.pt,color=zzttqq,fill=zzttqq,pattern=north east lines,pattern color=zzttqq] (1.712035625927518,-1.875894070867818) -- (1.708578607664749,-1.4154830864942631) -- (7.34771035744646,-1.373141391559073) -- (7.356683091649393,-1.8335109609113005) -- cycle;
				\draw [line width=0.8pt] (-1.6,0.2)-- (-1.62,-2.14);
				\draw [line width=0.8pt] (0.0972104853124314,0.46673311726718586)-- (-0.40635016085941006,-2.140991657551279);
				\draw [line width=0.8pt] (1.2302219391990747,0.4487488084753344)-- (1.7157982765790647,-2.1589759663431303);
				\draw [line width=0.8pt] (5.3,0.94)-- (5.68,-2.14);
				\draw [line width=0.8pt] (7.68,1.2)-- (7.38,-2.12);
				\draw [line width=0.8pt] (9.5,1.32)-- (9.88,-2.2);
				\draw [line width=0.8pt,dotted] (-1.6094874800213106,-0.5997847629195523)-- (11.68,-0.5);
				\draw [line width=0.8pt,dotted] (-1.6078806606286629,-0.37965050612679807)-- (11.76,0.72);
				\draw [line width=0.8pt,dotted] (-1.6176980552227287,-1.7246335655138736)-- (12.167520961478033,-0.5906525363870871);
				\draw [line width=0.8pt,dotted] (-1.6188327509876004,-1.8800868853012824)-- (11.855274718082452,-1.778915894217165);
				\draw (-1.169838343494448,0.661220983636734) node[anchor=north west] {{\scriptsize $Q_0$}};
				\draw (0.3984157578062862,0.661220983636734) node[anchor=north west] {{\scriptsize $Q_1$}};
				\draw (2.8829531542490225,0.925533472620002) node[anchor=north west] {{\scriptsize $\cdots$}};
				\draw (6.089944687246029,1.2603292919988083) node[anchor=north west] {{\scriptsize $Q_{N-1}$}};
				\draw (8.32779042730438,1.4541584505865381) node[anchor=north west] {{\scriptsize $Q_N$}};
				\draw (11.940061110075733,1.3308126223943464) node[anchor=north west] {{\scriptsize $S_m$}};
				\draw (11.552402792900272,0.34404599685681225) node[anchor=north west] {{\scriptsize $\Gamma_L$}};
				\draw (11.992923607872388,-0.448891470092992) node[anchor=north west] {{\scriptsize $S'_m$}};
				\draw (11.605265290696925,-1.6999705846137942) node[anchor=north west] {{\scriptsize $\Gamma_R$}};
				\draw [line width=0.8pt] (8.69371008781744,0.2329371029731166)-- (8.226118059229302,-0.12674907286391301);
				\draw [line width=0.8pt] (8.226118059229302,-0.12674907286391301)-- (8.8196002493604,-0.39651370474168524);
				\draw [line width=0.8pt] (8.8196002493604,-0.39651370474168524)-- (9.269207969156685,-0.09078045528021006);
				\draw [line width=0.8pt] (9.269207969156685,-0.09078045528021006)-- (9.215255042781132,0.322858646932374);
				\draw [line width=0.8pt] (9.215255042781132,0.322858646932374)-- (8.69371008781744,0.2329371029731166);
				\draw [line width=0.8pt,dotted] (9.107190984058445,-0.5193180615940957)-- (9.073166182145515,0.4989793129748561);
				\draw (9.12072789425419,0.06211267527465962) node[anchor=north west] {{\scriptsize $> \delta$}};
				\draw [->,line width=0.8pt, -{Latex[width=1.5mm]}] (5.062003666351299,-0.7171465951330066) -- (7.246642955361751,-0.702960625723848);
				\draw [->,line width=0.8pt, -{Latex[width=1.5mm]}] (3.7989951148502596,-0.7253479493635329) -- (1.602881358291464,-0.7396084283022262);
				\draw [->,line width=0.8pt, -{Latex[width=1.5mm]}] (3.7990565078081513,-2.0123097044889735) -- (1.7105140106969041,-2.020245059125668);
				\draw [->,line width=0.8pt, -{Latex[width=1.5mm]}] (5.109326846440652,-2.0073313715982746) -- (7.355525351560548,-1.9987970076831552);
				\draw (4.010686440577641,-0.43127063749410743) node[anchor=north west] {{\scriptsize $\geq D$}};
				\draw (4.010686440577641,-1.7528330824104479) node[anchor=north west] {{\scriptsize $\geq D$}};
				\end{tikzpicture}
			\caption{}
			\label{fig:splitting vertex on a hole}
		\end{subfigure}

		\begin{subfigure}[b]{\textwidth}
			\centering
			\begin{tikzpicture}[line cap=round,line join=round,>=triangle 45,x=.9cm,y=0.7cm]
				\clip(-2.038620786915046,-2.796908384845598) rectangle (13.285176750659076,2.211648008758338);
				\fill[line width=2.pt,color=zzttqq,fill=zzttqq,pattern=north east lines,pattern color=zzttqq] (1.712035625927518,-1.875894070867818) -- (1.708578607664749,-1.4154830864942631) -- (7.34771035744646,-1.373141391559073) -- (7.356683091649393,-1.8335109609113005) -- cycle;
				\fill[line width=2.pt,color=zzttqq,fill=zzttqq,pattern=north east lines,pattern color=zzttqq] (1.585118873015018,-0.5478885076410602) -- (1.5816618547522494,-0.08747752326750513) -- (7.2207936045339585,-0.04513582833231492) -- (7.2297663387368925,-0.5055053976845422) -- cycle;
				\draw [line width=0.8pt] (-1.6,0)-- (-1.62,-2.24);
				\draw [line width=0.8pt] (0.0972104853124314,0.46673311726718586)-- (-0.40635016085941006,-2.140991657551279);
				\draw [line width=0.8pt] (1.2302219391990747,0.4487488084753344)-- (1.7157982765790647,-2.1589759663431303);
				\draw [line width=0.8pt] (5.3,0.94)-- (5.68,-2.14);
				\draw [line width=0.8pt,dotted] (-1.6094874800213106,-0.5997847629195523)-- (11.68,-0.5);
				\draw [line width=0.8pt,dotted] (-1.6188327509876004,-1.8800868853012824)-- (11.855274718082452,-1.778915894217165);
				\draw (-1.2833622831176181,-1.902523314559181) node[anchor=north west] {{\scriptsize $Q_0$}};
				\draw (0.3861565147503797,-1.8230224194226103) node[anchor=north west] {{\scriptsize $Q_1$}};
				\draw (2.0755505364025204,-2.0018994334798936) node[anchor=north west] {{\scriptsize $\cdots$}};
				\draw (6.229472307288372,-1.9223985383433233) node[anchor=north west] {{\scriptsize $Q_{N-1}$}};
				\draw (8.495247818680655,-1.8627728669908956) node[anchor=north west] {{\scriptsize $Q_N$}};
				\draw (11.814410190632508,2.1122718898376247) node[anchor=north west] {{\scriptsize $S_m$}};
				\draw (11.138652581971652,-0.3323806356119152) node[anchor=north west] {{\scriptsize $\Gamma_L$}};
				\draw (12.390791680372649,0.8005071200842131) node[anchor=north west] {{\scriptsize $S'_m$}};
				\draw (11.615657952791079,-1.6640206291494695) node[anchor=north west] {{\scriptsize $\Gamma_R$}};
				\draw [->,line width=0.8pt, -{Latex[width=1.5mm]}] (3.7990565078081513,-2.0123097044889735) -- (1.7105140106969041,-2.020245059125668);
				\draw [->,line width=0.8pt, -{Latex[width=1.5mm]}] (5.109326846440652,-2.0073313715982746) -- (7.355525351560548,-1.9987970076831552);
				\draw (4.003447243464375,-1.7236463005018974) node[anchor=north west] {{\scriptsize $\geq D$}};
				\draw [line width=0.8pt,dotted] (-1.62,-2.04)-- (12.088199690187151,0.5711649120505341);
				\draw [line width=0.8pt,dotted] (-1.6099712945348557,-0.6660673512752476)-- (11.791290592026236,1.8866314426034057);
				\draw [line width=0.8pt] (7.471017736078846,1.2194824900336674)-- (7.633563801513711,-2.2404266170798666);
				\draw [line width=0.8pt] (9.421570521297228,1.6374580868661748)-- (10.977368576173795,-0.10410689993593966);
				\draw [line width=0.8pt] (10.977368576173795,-0.10410689993593966)-- (9.909208717601825,-2.2404266170798666);
				\draw (7.839365433803941,1.15826114819878) node[anchor=north west] {{\scriptsize $> \frac{L}{\sin(\theta_m - \theta')}$}};
				\draw [line width=0.8pt,dotted] (7.7722010136412045,-0.5293418986982975)-- (7.759826534188959,1.1187102180860058);
				\end{tikzpicture}
			\caption{}
			\label{fig:splitting vertex on an edge}
		\end{subfigure}
		
		\caption{}\label{fig:splitting vertices}
		\end{figure}

	\appendix

	\section*{Appendix. Proof of Lemma~\ref{lemma:extend}} 
	\label{sec:proof_of_the_lemma}

		The family of disjoint open subsets $\{E_a\}_{a \in \mathcal{A}}$ has been defined in Section~\ref{sec:partition and encoding}. We fix a metric $d$ on $E_a$ by defining $d(\rho, \rho') = |x - x'| + |\theta - \theta'|$ for $\rho = (x, \theta), \rho' = (x', \theta') \in E_a$. We need to show the uniform continuity of the billiard map $f$ on each $U_{a,b}^{i,j}$. In fact, we will prove that $f|_{U_{a,b}^{i,j}}$ is $M$-Lipschitz for some $M > 0$ depending only on the geometry of $Q$.

		For $a,b \in \mathcal{A}$ and $i,j \in I_{a,b}$, 
		fix an arbitrary $(x, \theta) \in U_{a,b}^{i,j}$ and suppose $f(x, \theta) = (y,  \phi) \in E_b$. We first calculate the coordinates of $f(x + \epsilon_1, \theta + \epsilon_2)$.

		Consider a phase point $(x - \epsilon_1, \theta)$ in $U_{a,b}^{i,j}$ parallel to $(x, \theta)$. Note that $f(x, \theta)$ and $f(x - \epsilon_1, \theta)$ land on the same edge $e_b$ and remain parallel as shown in Figure~\ref{fig:extend}. Let $f(x- \epsilon_1, \theta) = (y+\delta_1,  \phi)$ for some $\delta_1$ depending on $\epsilon_1$. By sine rule we deduce that $\delta_1 = (\sin \theta / \sin \phi) \epsilon_1$. Therefore
		\begin{equation}\label{eq:e1}
			f(x + \epsilon_1, \theta) = \Big(y - \frac{\sin\theta}{\sin \phi} \epsilon_1,  \phi\Big).
		\end{equation}
		Next, consider $(x,  \theta + \epsilon_2) \in U_{a,b}^{i,j}$, a phase point having the same base point as $(x, \theta)$ but a possibly different direction as illustrated in Figure~\ref{fig:extend_angle}. Let $d$ be the spatial distance on $Q$ between the base point $x$ and the base point $y$, we have $f(x,  \theta + \epsilon_2) = (y+\delta_2,  \phi - \epsilon_2)$ for some $\delta_2$ depending on $\epsilon_2$. By sine rule, we have $\delta_2 = d \sin(\epsilon_2) / \sin(\phi - \epsilon_2)$. Therefore
		\begin{equation}\label{eq:e2}
			f(x, \theta + \epsilon_2) = \Big(y+\frac{d \sin(\epsilon_2)}{\sin(\phi - \epsilon_2)}, \phi- \epsilon_2\Big).
		\end{equation}

		\begin{figure}
			\centering
			\begin{subfigure}{.48\textwidth}
				\centering
				\begin{tikzpicture}[line cap=round,line join=round,>=triangle 45,x=1.7cm,y=1.5cm]
					\clip(1.0320698198916618,-2.591771564547342) rectangle (4.100415570199323,1.9927723752286934);
					\draw [shift={(2.8800832876004776,-2.165341382315912)},line width=0.8pt,color=qqwuqq,fill=qqwuqq,fill opacity=0.10000000149011612] (0,0) -- (-0.32554069865930585:0.16722186148372803) arc (-0.32554069865930585:80.93786888323237:0.16722186148372803) -- cycle;
					\draw [shift={(1.6638999594398116,0.7511609459716206)},line width=0.8pt,color=qqwuqq,fill=qqwuqq,fill opacity=0.10000000149011612] (0,0) -- (0.02605765098786885:0.16722186148372803) arc (0.02605765098786885:80.93786888323237:0.16722186148372803) -- cycle;
					\draw [shift={(1.7475485079175759,1.2756156624487696)},line width=0.8pt,color=qqwuqq,fill=qqwuqq,fill opacity=0.10000000149011612] (0,0) -- (-99.06213111676763:0.16722186148372803) arc (-99.06213111676763:-18.146618793447264:0.16722186148372803) -- cycle;
					\draw [line width=1.2pt,color=wrwrwr,dotted,domain=0.5320698198916618:4.100415570199323] plot(\x,{(--5.427371755740889-0.04*\x)/7.04});
					\draw [line width=1.2pt,color=ududff] (0.7354758890917297,1.6073236738821206)-- (4.25033839421836,0.45532328344588);
					\draw [line width=1.2pt,color=ududff] (0.6168908035445706,-2.1524823341110486)-- (3.81430199149529,-2.170649443133496);
					\draw [line width=1.2pt,color=wrwrwr,dotted] (1.2002511540820606,-2.1557968815572846)-- (1.7475485079175759,1.2756156624487696);
					\draw [line width=1.2pt,color=wrwrwr,dotted] (1.7475485079175759,1.2756156624487696)-- (0.844058095047499,-0.48004137664483304);
					\draw [line width=1.2pt,color=wrwrwr,dotted] (2.8800832876004776,-2.165341382315912)-- (3.345376375050532,0.7519256677185893);
					\draw [line width=1.2pt,color=wrwrwr,dotted] (3.345376375050532,0.7519256677185893)-- (2.6336105859113372,-0.6311732562850869);
					\draw [->,line width=1.2pt,color=wrwrwr] (1.2002511540820606,-2.1557968815572846) -- (1.3052833994748942,-1.4972719282749252);
					\draw [->,line width=1.2pt,color=wrwrwr] (2.8800832876004776,-2.165341382315912) -- (2.9721059254254434,-1.5883832962733215);
					\draw [->,line width=1.2pt,color=wrwrwr] (1.7475485079175759,1.2756156624487696) -- (1.2786781722128702,0.3645096965457081);
					\draw [->,line width=1.2pt,color=wrwrwr] (3.345376375050532,0.7519256677185893) -- (2.916003212239388,-0.08242963362058553);
					\begin{scriptsize}
						\draw[color=black] (2.9327538817238765,-2.26050903744627) node {$x$};
						\draw[color=black] (3.4009750938783148,0.8916230515219989) node {$y$};
						\draw[color=black] (1.3984933026106716,-2.244523025538245) node {$x-\epsilon_1$};
						\draw[color=black] (2.0004920039520924,1.4309135548070209) node {$y+\delta_1$};
						\draw[color=black] (3.0748924639850452,-1.968203896739983) node {$\theta$};
						\draw[color=black] (1.8625339682280169,0.8598175861510671) node {$\theta$};
						\draw[color=black] (1.8708950613022033,1.0560667744894547) node {$\phi$};
					\end{scriptsize}
				\end{tikzpicture}
				\caption{}
				\label{fig:extend}
			\end{subfigure}%
			\begin{subfigure}{.48\textwidth}
				\centering
				\begin{tikzpicture}[line cap=round,line join=round,>=triangle 45,x=3.4cm,y=2.265cm]
					\clip(0.767812320914,-1.853635933589052) rectangle (2.2581213725361957,0.6625072998568945);
					\draw [shift={(1.6885694007594698,0.2450964717214209)},line width=0.8pt,color=ududff,fill=ududff,fill opacity=0.10000000149011612] (0,0) -- (-95.3251818800813:0.09561207863844716) arc (-95.3251818800813:-12.916585564613358:0.09561207863844716) -- cycle;
					\draw [shift={(1.5134265791083257,-1.633906615968953)},line width=0.8pt,color=ududff,fill=ududff,fill opacity=0.10000000149011612] (0,0) -- (-0.38570012997658126:0.09561207863844716) arc (-0.38570012997658126:84.6748181199187:0.09561207863844716) -- cycle;
					\draw [shift={(1.5134265791083257,-1.633906615968953)},line width=0.8pt,color=qqwuqq,fill=qqwuqq,fill opacity=0.10000000149011612] (0,0) -- (84.6748181199187:0.09561207863844716) arc (84.6748181199187:103.91027372005495:0.09561207863844716) -- cycle;
					\draw [shift={(1.0094922709346117,0.4008327960155045)},line width=0.8pt,color=qqwuqq,fill=qqwuqq,fill opacity=0.10000000149011612] (0,0) -- (-76.08972627994493:0.09561207863844716) arc (-76.08972627994493:-12.916585564613358:0.09561207863844716) -- cycle;
					\draw [line width=1.2pt,color=ududff] (0.5003605773597061,-1.6270868188692815)-- (2.063200202008491,-1.6376076034193845);
					\draw [line width=1.2pt,color=ududff] (0.2666186990954486,0.5711998933779006)-- (2.2053387842696597,0.12658303523394382);
					\draw [line width=1.2pt,color=wrwrwr,dotted,domain=0.54167812320914:2.2581213725361957] plot(\x,{(--0.6370580271592912-0.010520784550102968*\x)/1.562839624648785});
					\draw [line width=1.2pt,color=wrwrwr,dotted,domain=0.54167812320914:2.2581213725361957] plot(\x,{(--0.4008115527311339-0.010520784550102968*\x)/1.562839624648785});
					\draw [line width=1.2pt,color=wrwrwr,dotted] (1.5134265791083257,-1.633906615968953)-- (1.0094922709346117,0.4008327960155045);
					\draw [line width=1.2pt,color=wrwrwr,dotted] (1.5134265791083257,-1.633906615968953)-- (1.6885694007594698,0.2450964717214209);
					\draw [->,line width=1.2pt,color=wrwrwr] (1.5134265791083257,-1.633906615968953) -- (1.570557442888552,-1.020983583021103);
					\draw [->,line width=1.2pt,color=wrwrwr] (1.5134265791083257,-1.633906615968953) -- (1.3654800110060206,-1.0365416259237976);
					\begin{scriptsize}
						\draw[color=black] (1.7239863025252742,0.334275730406164) node {$y$};
						\draw[color=black] (1.1574847365924747,0.48920656605748526) node {$y+\delta_2$};
						\draw[color=black] (1.6905220750018175,-0.6600898874336858) node {$d$};
						\draw[color=black] (1.762231133980653,0.14954123604002748) node {$\phi$};
						\draw[color=black] (1.6235936199549046,-1.507965842362778) node {$\theta$};
						\draw[color=black] (1.5029584865335644,-1.3530811162772424) node {$\epsilon_2$};
						\draw[color=black] (1.1722192309586113,0.2066660323721252) node {$\phi-\epsilon_2$};
					\end{scriptsize}
				\end{tikzpicture}
				\caption{}
				\label{fig:extend_angle}
			\end{subfigure}
			\caption{Proof of Lemma \ref{lemma:extend}. The edge below is $e_a$ and the edge on top is $e_b$}
		\end{figure}

		Now consider a phase point $(x + \epsilon_1,  \theta + \epsilon_2) \in U_{a,b}^{i,j}$ whose distance from $(x, \theta)$ is $\epsilon = \epsilon_1 + \epsilon_2$.
		By combining \eqref{eq:e1} with \eqref{eq:e2}, we deduce 
		\begin{align*}
			f(x+\epsilon_1,  \theta+\epsilon_2) = \Big(y - \frac{\epsilon_1 \sin (\theta+\epsilon_2)}{\sin( \phi- \epsilon_2)} + \frac{d\sin\epsilon_2}{\sin(\phi - \epsilon_2)}, \ \ \phi - \epsilon_2\Big).
		\end{align*}
		Therefore the distance between $f(x, \theta)$ and $f(x + \epsilon_1,  \theta + \epsilon_2)$ is explicitly given by 
		\begin{equation}\label{eq:dist}
			d(f(x, \theta), f(x + \epsilon_1,  \theta + \epsilon_2)) = \left|\frac{\epsilon_1 \sin (\theta+\epsilon_2)}{\sin( \phi- \epsilon_2)} - \frac{d\sin\epsilon_2}{\sin(\phi - \epsilon_2)}\right| + |\epsilon_2|.
		\end{equation}

		The value of $d > 0$ is uniformly bounded from above by the diameter of the polygon $Q$. On the other hand, since both $f(x + \epsilon_1,  \theta + \epsilon_2)$ and $ \tau \circ f (x + \epsilon_1,  \theta + \epsilon_2)$ are on the same edge $e_b$ by the definition of $U_{a,b}^{i,j}$, the distance $|L/\sin (\phi - \epsilon_2)|$ between their base points is bounded from above by the length of $e_b$. In particular, the value of $|\sin(\phi- \epsilon_2)|^{-1}$ is bounded from above by the length of $e_b$ divided by $L$, and we have 
		\[ \left|\frac{\epsilon_1 \sin (\theta+\epsilon_2)}{\sin( \phi- \epsilon_2)}\right| + \left|\frac{d\sin\epsilon_2}{\sin(\phi - \epsilon_2)}\right| \leq M' (|\epsilon_1|+ |\epsilon_2|)\] 
		for some $M' > 0$ depending only on $L$, the length of $e_b$ and the diameter of $Q$. Therefore the distance \eqref{eq:dist} is not greater than $M'(|\epsilon_1|+ |\epsilon_2|) + |\epsilon_2|$. This proves that $f$ is $M$-Lipschitz with $M = M'+1$.
	
	\printbibliography
	
\end{document}